\theoremstyle{plain}
\newtheorem{theorem}{Theorem}[section]
\newtheorem{proposition}[theorem]{Proposition}
\newtheorem{corollary}[theorem]{Corollary}
\newtheorem{lemma}[theorem]{Lemma}
\theoremstyle{definition}
\newtheorem{definition}[theorem]{Definition}
\newtheorem{notation}[theorem]{Notation}
\newtheorem{construction}[theorem]{Construction}
\theoremstyle{remark}
\newtheorem{remark}[theorem]{Remark}
\newtheorem{example}[theorem]{Example}
\newcommand{\ider}{{D}}
\newcommand{\fa}{\mathfrak{a}}
\newcommand{\fg}{\mathfrak{g}}
\newcommand{\fh}{\mathfrak{h}}
\newcommand{\fm}{\mathfrak{m}}
\numberwithin{equation}{section}
\title[Representations of $L$-semisimple Lie-Yamaguti algebras]
{Representations of Lie-Yamaguti algebras with \\
semisimple enveloping Lie algebras}
\author{Nobuyoshi Takahashi}
\address{
Department of Mathematics, 
Graduate School of Advanced Science and Engineering, 
Hiroshima University, 
1-3-1 Kagamiyama, Higashi-Hiroshima, 
739-8526 JAPAN}
\email{tkhsnbys@hiroshima-u.ac.jp}
\subjclass[2010]{Primary 17A30; Secondary 17A40; 17B60; 22F30}
\keywords{Lie-Yamaguti algebra; regular $s$-manifold; Lie triple system; quandle; representation}
\begin{document}

\begin{abstract}
Let $T$ be a Lie-Yamaguti algebra whose standard enveloping Lie algebra $L(T)$ is 
semisimple and $[T, T, T]=T$. 
Then we give a description of representations of $T$ 
in terms of representations of $L(T)$ with certain additional data. 
Similarly, if $(T, \sigma)$ is an infinitesimal $s$-manifold such that $L(T)$ is semisimple, 
then any representation of $(T, \sigma)$ 
comes from a representation of $L(T)$. 
\end{abstract}

\maketitle

\section{Introduction}

In his study of invariant affine connections on reductive homogeneous spaces, 
Nomizu(\cite{Nomizu1954}) considered the tangent space of a reductive homogeneous space at a point 
together with the torsion and curvature tensors 
associated to a certain canonical affine connection. 
It can be regarded as a kind of algebra given by a bilinear and a trilinear operations 
which generalizes both Lie algebras and Lie triple systems. 
Yamaguti called them general Lie triple systems, 
and more detailed studies were made by Yamaguti(\cite{Yamaguti1958}, \cite{Yamaguti1969}), 
Sagle(\cite{Sagle1965a}, \cite{Sagle1965b}, \cite{Sagle1968}) 
and Kikkawa(\cite{Kikkawa1979}, \cite{Kikkawa1981}). 
Such an algebra is now called a Lie-Yamaguti algebra (\cite{KW2001}, see Definition \ref{def_lyalg}), 
and is an object of active study. 
See \cite{BDE2005}, \cite{BEMH2009}, \cite{BEMH2011}, \cite{GMM2024}
for topics related to structure theories, 
\cite{SZZ2021}, \cite{ZXQ2023} for additional structures such as complex structures,  
Rota-Baxter operators and post-structures, 
and \cite{GN2012}, \cite{ZL2015}, \cite{ZL2017}, \cite{SC2023}, \cite{TJL2023} 
for deformations and Hom- and differential generalizations. 

Just as Lie triple systems are infinitesimal algebras for symmetric spaces, 
the study of Lie-Yamaguti algebras is closely related to that of regular $s$-manifolds. 
A regular $s$-manifold $Q$ is a manifold with a ``symmetry'' at each point, 
not necessarily involutive, 
satisfying certain conditions (\cite{Kowalski1980}, \cite{Fedenko1977}, see Definition \ref{def_reg_s_mfd}). 
The tangent space $T=T_qQ$ of a regular $s$-manifold $Q$ at a point $q$ 
can be endowed with the structure of a Lie-Yamaguti algebra, 
and the symmetry at the point gives an automorphism $\sigma$ of $T$. 
Then the Lie-Yamaguti algebra $T$ together with $\sigma$ forms 
what is called an infinitesimal $s$-manifold (see Definition \ref{def_inf_s_mfd}), 
and there is an inverse correspondence as in the case of Lie groups. 

A regular $s$-manifold can be considered as a quandle. 
A quandle is an algebraic system defined by a binary operation 
(\cite{Joyce1982}, \cite{Matveev1982}, see Definition \ref{def_quandle}), 
with many applications in knot theory. 
While current applications mainly make use of finite quandles, 
studies have recently been started on 
topological quandles (\cite{Rubinsztein2007}, \cite{EM2016}, \cite{ESZ2019}, \cite{EGL2023}) 
and manifolds and algebraic varieties with quandle structures (\cite{Ishikawa2018}, \cite{Takahashi2016}). 
A regular $s$-manifold can be seen as a manifold with a ``non-degenerate'' quandle structure. 

In the application of quandles, 
modules over quandles and their cohomology groups play 
an important role (\cite{CJKLS2003}, \cite{AG2003}, \cite{Jackson2005}). 
Modules can be considered also in geometric settings. 
In \cite{Takahashi2021}, 
the author defined the notion of a regular quandle module over a regular $s$-manifold $Q$, 
and proved that there is a correspondence between regular quandle modules over $Q$ 
and regular representations of the associated infinitesimal $s$-manifold $T=T_qQ$ 
(see also \cite{BD2009}). 
Thus, a large part of the study of regular modules over $Q$ can be reduced 
to that of representations of $T$. 

The aim of this paper is to relate the representations of Lie-Yamaguti algebras 
and infinitesimal $s$-manifolds 
to representations of Lie algebras. 
Given a Lie-Yamaguti algebra $T$, 
there is a Lie algebra $L(T)=T\oplus \ider(T)$ 
(see Definitions \ref{def_der0_lt}), 
called the standard enveloping Lie algebra of $T$. 
We are especially interested in the case where $L(T)$ is a semisimple Lie algebra, 
and in that case we will say that $T$ is \emph{$L$-semisimple}. 
By a representation of the triple $(L(T), T, \ider(T))$, 
we mean a representation $M$ of $L(T)$ with a decomposition $M=M_{\mathrm{n}}\oplus M_{\mathrm{s}}$ 
satisfying $T\cdot M_{\mathrm{s}}\subseteq M_{\mathrm{n}}$, 
$\ider(T)\cdot M_{\mathrm{n}}\subseteq M_{\mathrm{n}}$ and 
$\ider(T)\cdot M_{\mathrm{s}}\subseteq M_{\mathrm{s}}$. 
Then one can define a representation of $T$ on $M_{\mathrm{n}}$ in a natural way. 
Is it true that a representation of $T$ always is isomorphic 
to a representation constructed in this way? 
The answer is negative in general, 
even if $T$ is $L$-semisimple (Example \ref{ex_non_tight}). 

In this paper, we show that if 
$T$ is $L$-semisimple and $[T, T, T]=T$, then the answer is positive. 
More precisely, we define the category 
$\mathrm{Rep}^{\mathrm{em}}(L(T), T, \ider(T))$ 
of effective and minimal representations of the triple $(L(T), T, \ider(T))$, 
and show that if $T$ is a Lie-Yamaguti algebra such that $L(T)$ is semisimple and $[T, T, T]=T$, 
then $\mathrm{Rep}^{\mathrm{em}}(L(T), T, \ider(T))$ is equivalent 
to the category $\mathrm{Rep}(T)$ of representations of $T$ (Corollary \ref{cor_equivalence_ly}). 

Similarly, if $(T, \sigma)$ is an infinitesimal $s$-manifold, 
we consider a certain category 
$\mathrm{Rep}^{\mathrm{em}}(L(T), L(\sigma))$ 
of representations of $L(T)$ with additional data, 
and show that if $L(T)$ is semisimple, 
then $\mathrm{Rep}^{\mathrm{em}}(L(T), L(\sigma))$ is equivalent 
to the category $\mathrm{Rep}(T, \sigma)$ of representations of $(T, \sigma)$ 
(Corollary \ref{cor_equivalence_ism}). 

We note that there is a certain parallel between our results 
and the classification of irreducible modules over a quandle 
formed by a conjugation class in $SL_2(\mathbb{F}_q)$, 
where $q$ is a power of a prime (\cite{Uematsu2023}). 

This paper is organized as follows. 
In section 2, we recall the definitions of Lie-Yamaguti algebras 
and infinitesimal $s$-manifolds and state a few basic facts. 
In particular, we recall related notions 
such as reductive triples, local regular $s$-pairs, quandles and regular $s$-manifolds 
and explain how they are related. 

In section 3, 
we define the $L$-semisimplicity and $L$-simplicity of Lie-Yamaguti algebras and infinitesimal $s$-manifold, 
and show that a Lie-Yamaguti algebra or an infinitesimal $s$-manifold is 
$L$-semisimple if (and only if) it has a semisimple enveloping Lie algebra, 
not necessarily the standard one. 
A couple of interesting examples are also discussed. 

In section 4, 
we recall the definition of a representation of 
a Lie-Yamaguti algebra (resp. an infinitesimal $s$-manifold), 
define a representation of a reductive triple (resp. a local regular $s$-pair), 
and show how the latter gives rise to the former. 

To construct the inverse correspondence, 
in section 5, we define the notion of tightness of a representation 
of a Lie-Yamaguti algebra 
and give sufficient conditions for tightness, along with a few examples. 

In section 6, 
we prove the correspondence of 
effective minimal representations of $(L(T), T, \ider(T))$ 
and tight representations of $T$. 
In particular, for an $L$-semisimple $T$, we obtain the main results.

\section*{Acknowledgements}
The author would like to thank the referee for valuable comments and suggestions. 
This work was partially supported by JSPS KAKENHI Grant Number \linebreak
JP22K03229.

\section{Lie-Yamaguti algebras and infinitesimal $s$-manifolds}

We work over a field $\mathbb{K}$ 
of characteristic $0$ in this paper.

\subsection{Lie-Yamaguti algebras}

\begin{definition}[{\cite[\S1]{Yamaguti1969}}]
\label{def_lyalg}
(1)
A \emph{Lie-Yamaguti algebra} over $\mathbb{K}$ is a triplet $(T, *, [\ ])$, 
where $T$ is a finite dimensional $\mathbb{K}$-vector space, 
$*: T\times T\to T$ is a bilinear operation, 
and $[\ ]: T\times T\times T\to T$ is a trilinear operation, 
such that the following hold for any $x, y, z, v, w\in T$. 
\begin{enumerate}
\item[(LY1)]
$x*x=0$. 
\item[(LY2)]
$[x, x, y]=0$. 
\item[(LY3)]
$[x, y, z]+[y, z, x]+[z, x, y]+(x*y)*z+(y*z)*x+(z*x)*y=0$. 
\item[(LY4)]
$[x*y, z, w]+[y*z, x, w]+[z*x, y, w]=0$. 
\item[(LY5)]
$[x, y, z*w]=[x, y, z]*w+z*[x, y, w]$. 
\item[(LY6)]
$[x, y, [z, v, w]]=[[x, y, z], v, w]+[z, [x, y, v], w]+[z, v, [x, y, w]]$. 
\end{enumerate}

(2)
A \emph{homomorphism of Lie-Yamaguti algebras} 
from $(T, *, [\ ])$ to $(T', *, [\ ])$ is 
a linear map $f: T\to T'$ 
satisfying $f(x*y)=f(x)*f(y)$ and $f([x, y, z])=[f(x), f(y), f(z)]$. 

(3)
A \emph{derivation} of $T$ is a linear map $u: T\to T$ such that 
$u(x*y)=u(x)*y+x*u(y)$ and $u([x, y, z])=[u(x), y, z]+[x, u(y), z]+[x, y, u(z)]$ 
hold. 

For $x, y\in T$, 
let $D_{x, y}$ denote the map $T\to T$ defined by $D_{x, y}(z)=[x, y, z]$. 
By (LY5) and (LY6), this is a derivation of $T$. 
\end{definition}

Lie-Yamaguti algebras naturally arise from certain decompositions of Lie algebras. 
\begin{definition}
A \emph{reductive triple} is a triplet $(\fg, \fm, \fh)$ 
consisting of a Lie algebra $\fg$, 
a Lie subalgebra $\fh\subseteq\fg$ and a linear subspace $\fm\subseteq\fg$ 
such that $\fg=\fm\oplus \fh$ and $[\fh, \fm]\subseteq \fm$ hold. 
The decomposition $\fg=\fm\oplus \fh$ is called a \emph{reductive decomposition}. 

For a reductive triple $(\fg, \fm, \fh)$ 
and $x\in \fg$, 
we write the $\fm$- and $\fh$-components of $x$ 
by $x_\fm$ and $x_\fh$. 
For a subspace $V$ of $\fg$, 
we write $V_{\fh}$ for the set $\{x_\fh\mid x\in V\}$. 

A \emph{homomorphism of reductive triples} 
from $(\fg, \fm, \fh)$ 
to $(\fg', \fm', \fh')$ 
is a Lie algebra homomorphism $f: \fg\to \fg'$ 
satisfying $f(\fm)\subseteq \fm'$ 
and $f(\fh)\subseteq \fh'$. 
\end{definition}

\begin{proposition}\label{prop_ly_from_rt}
Let $(\fg, \fm, \fh)$ be a reductive triple. 
Then $\fm$ forms a Lie-Yamaguti algebra with the operations 
\begin{eqnarray*}
x*y & := & [x, y]_\fm, \\
 {[}x, y, z{]}  & := & [[x, y]_\fh, z]. 
\end{eqnarray*}
If $f: (\fg, \fm, \fh)\to (\fg', \fm', \fh')$ is a homomorphism of 
reductive triples, 
then the induced map $\fm\to\fm'$ is a homomorphism 
of Lie-Yamaguti algebras. 

In other words, there is a fuctor 
$\mathcal{LY}: \mathrm{RT}\to \mathrm{LY}; (\fg, \fm, \fh)\mapsto \fm$ 
from the category $\mathrm{RT}$ of reductive triples 
to the category $\mathrm{LY}$ of Lie-Yamaguti algebras. 
\end{proposition}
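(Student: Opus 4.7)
The plan is to check that both operations land in $\fm$ and then verify (LY1)--(LY6) by decomposing the Jacobi identity in $\fg$ into its $\fm$- and $\fh$-components, using only $[\fh,\fm]\subseteq\fm$ and the fact that $\fh$ is a subalgebra. That $x*y\in\fm$ is by construction, and $[x,y,z]\in\fm$ because $[x,y]_\fh\in\fh$, $z\in\fm$, and $[\fh,\fm]\subseteq\fm$. Axioms (LY1) and (LY2) follow immediately from antisymmetry of the Lie bracket.

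For (LY3) and (LY4), I would expand
\[
[[x,y],z] = [x*y,z] + [[x,y]_\fh,z] = (x*y)*z + [x*y,z]_\fh + [x,y,z]
\]
and sum cyclically. Jacobi in $\fg$ gives $\sum_{\mathrm{cyc}}[[x,y],z] = 0$; projecting onto $\fm$ yields (LY3), while projecting onto $\fh$ yields $\sum_{\mathrm{cyc}}[x*y,z]_\fh = 0$, and bracketing with $w\in\fm$ converts this into (LY4). For (LY5), the key observation is that $\operatorname{ad}([x,y]_\fh)$ is a derivation of $\fg$: applied to $[z,w]$ and projected onto $\fm$, the cross term $[[x,y]_\fh,[z,w]_\fh]$ (which lies entirely in $\fh$) drops out, producing $[x,y,z*w] = [x,y,z]*w + z*[x,y,w]$.

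For (LY6), the same derivation property applied to $[z,v]$ and then projected onto $\fh$ gives the auxiliary identity
\[
[[x,y]_\fh,[z,v]_\fh] = [[x,y,z],v]_\fh + [z,[x,y,v]]_\fh.
\]
Expanding $[[x,y]_\fh,[[z,v]_\fh,w]]$ via Jacobi and substituting this auxiliary identity produces exactly the three-term right-hand side of (LY6). Functoriality is then immediate: a reductive-triple homomorphism $f$ respects the direct-sum decomposition $\fg=\fm\oplus\fh$, so it commutes with the projections $(\cdot)_\fm$ and $(\cdot)_\fh$, hence transports both operations to $\fm'$. The one step requiring real care, though not deep, is (LY6), where the preliminary $\fh$-projection computation must be executed before the main one, and one must keep careful track of which projection is applied where.
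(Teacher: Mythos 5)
Your proposal is correct: the paper itself gives no written argument for this proposition (it cites Sagle and notes that the verification is straightforward), and your direct verification --- projecting the Jacobi identity onto $\fm$ and $\fh$ for (LY3)--(LY4), using that $\operatorname{ad}([x,y]_\fh)$ is a derivation for (LY5), and first extracting the $\fh$-projection identity $[[x,y]_\fh,[z,v]_\fh]=[[x,y,z],v]_\fh+[z,[x,y,v]]_\fh$ before expanding (LY6) --- is exactly the standard computation being alluded to. All steps, including the functoriality argument via compatibility of $f$ with the projections, check out.
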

\begin{proof}
The first statement appears in \cite[p. 105--106]{Sagle1968}, 
and the second statement is also well known. 
Both are straightforward to prove. 
\end{proof}

Conversely, any Lie-Yamaguti algebra can be embedded into a Lie algebra. 

\begin{definition}\label{def_der0_lt}
Let $T$ be a Lie-Yamaguti algebra. 

(1)
For linear subspaces $V$ and $W$ of $T$, 
we define the following subspaces of $\mathrm{End}(T)$. 
Here $\langle\dots\rangle$ stands for the linear span. 
\begin{align*}
\ider_T(V, W)
& = \langle \{D_{x, y}\mid x\in V, y\in W\} \rangle, \\
\ider_T(V)
& = \ider_T(V, V)
 = \langle \{D_{x, y}\mid x, y\in V\} \rangle, \\
\ider(T)
& = \ider_T(T)
 = \langle \{D_{x, y}\mid x, y\in T\} \rangle. 
\end{align*}

(2)
The \emph{standard enveloping Lie algebra} $L(T)$ of $T$ is defined 
as the linear space $T\oplus\ider(T)$ with the operation $[, ]$ on $L(T)$ defined by 
\begin{equation}\label{eq_lt}
[(x_1, h_1), (x_2, h_2)] = (x_1 * x_2 + h_1(x_2)-h_2(x_1), D_{x_1, x_2}+[h_1, h_2]). 
\end{equation}
If $(\fg, T, \fh)$ is a reductive triple that induces the given Lie-Yamaguti algebra structure on $T$, 
then $\fg$ is called \emph{an enveloping Lie algebra} of $T$. 
\end{definition}

\begin{remark}
In \cite{Takahashi2021}, $L(T)$ and $\ider(T)$ 
are written as $\fg(T)$ and $\fh(T)$, respectively. 
\end{remark}

\begin{proposition}
(\cite[pp. 61--62]{Nomizu1954}, \cite[Proposition 2.1]{Yamaguti1958})
Let $T$ be a Lie-Yamaguti algebra. 
Then the triple $(L(T), T, \ider(T))$ is a reductive triple, 
and the induced Lie-Yamaguti algebra structure coincides with the original one. 
(Note that the Lie bracket in the above literature differs from ours by a sign.) 
\end{proposition}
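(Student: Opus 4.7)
The plan is to verify in turn that (\ref{eq_lt}) makes $L(T) = T \oplus \ider(T)$ into a Lie algebra, that $\ider(T)$ is a Lie subalgebra with $\ider(T)$-stable complement $T$ (yielding the reductive triple), and that the induced operations on $T$ reproduce $*$ and $[\,,\,,\,]$.

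First I would check that (\ref{eq_lt}) is well-defined, i.e.\ that $[h_1, h_2]$ in $\mathrm{End}(T)$ lies again in $\ider(T)$ for $h_1, h_2 \in \ider(T)$. By bilinearity of $D$, it suffices to treat $h_1 = D_{x,y}$ and $h_2 = D_{z,v}$, and then the identity $[D_{x,y}, D_{z,v}] = D_{[x,y,z], v} + D_{z, [x,y,v]}$ is exactly (LY6) read as an equation in $\mathrm{End}(T)$. Antisymmetry $[a, a] = 0$ for $a = (x, h)$ follows from (LY1) ($x * x = 0$) on the $T$-component and from (LY2) ($D_{x,x} = 0$) on the $\ider(T)$-component.

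The bulk of the proof is Jacobi, which I would verify by case analysis according to how many of the three arguments lie in $T$ and how many in $\ider(T)$. For three $\ider(T)$-arguments, Jacobi in $L(T)$ reduces to Jacobi for the commutator on $\mathrm{End}(T)$. For two $\ider(T)$-arguments and one $T$-argument, it reduces to $[h_1, h_2](x) = h_1(h_2(x)) - h_2(h_1(x))$. For one $\ider(T)$-argument and two $T$-arguments, the $T$-component asks $h$ to act as a derivation on $*$, which is (LY5) applied to the generators $D_{u,v}$ of $h$, and the $\ider(T)$-component is the identity $[h, D_{x,y}] = D_{h(x), y} + D_{x, h(y)}$, again a restatement of (LY6). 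For three $T$-arguments, expanding the cyclic sum shows that its $T$-component matches the left-hand side of (LY3) and its $\ider(T)$-component matches that of (LY4), after using the antisymmetries $y * x = -x * y$ and $D_{b,a} = -D_{a,b}$ (the latter obtained by polarizing (LY2)).

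Once $L(T)$ is known to be a Lie algebra, the reductive triple conditions follow at once from (\ref{eq_lt}): $\ider(T)$ is a subalgebra by the closure property above, and $[\ider(T), T] \subseteq T$ since $[(0, h), (x, 0)] = (h(x), 0)$. Finally, restricting (\ref{eq_lt}) to two elements of $T$ gives $[(x, 0), (y, 0)] = (x * y, D_{x, y})$, so the induced binary operation is indeed $x * y$, and $[[x, y]_{\ider(T)}, z] = [(0, D_{x,y}), (z, 0)]$ has $T$-component $D_{x,y}(z) = [x, y, z]$, matching the original trilinear operation. The main obstacle is the three-$T$-argument case of Jacobi: pairing the $T$- and $\ider(T)$-components against (LY3) and (LY4) respectively requires careful sign-tracking through the two antisymmetries, though the remaining cases are essentially routine.
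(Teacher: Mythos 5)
Your proposal is correct: the case-by-case verification of antisymmetry and the Jacobi identity (with (LY1)--(LY2) giving antisymmetry, (LY5)--(LY6) handling the mixed cases, (LY3)--(LY4) handling the three-$T$ case after polarizing the antisymmetries, and (LY6) giving closure of $\ider(T)$ under the commutator), followed by reading off the reductive decomposition and the induced operations from (\ref{eq_lt}), is exactly the standard argument. The paper itself gives no proof but defers to Nomizu and Yamaguti, whose verification proceeds in essentially the same way (up to the sign convention noted in the statement), so your route coincides with the intended one.
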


\begin{remark}
One subtle point is that a homomorphism $f: T\to T'$ of Lie-Yamaguti algebras 
does not necessarily induce a homomorphism $L(T)\to L(T')$. 
In other words, $T\mapsto L(T)$ does not define 
a functor $\mathrm{LY}\to\mathrm{RT}$ in a natural way. 

This is one of the factors 
that make the representation of Lie-Yamaguti algebras more complicated. 
\end{remark}

Let us look at the relation between a Lie-Yamaguti algebra and its enveloping Lie algebras more closely, 
according to 
\cite{Yamaguti1969}, \cite{Fedenko1977} and \cite{Kowalski1980}. 
We provide some of the proofs 
for the reader's convenience.

\begin{definition}[\cite{Yamaguti1969}]
Let $T$ be a Lie-Yamaguti algebra. 

A \emph{subalgebra} of $T$ is a linear subspace closed under $*$ and $[\ ]$. 

An \emph{ideal} of $T$ is a linear subspace $U$ satisfying 
$U*T\subseteq U$ and $[U, T, T]\subseteq U$. 
It is easy to check that $[T, U, T]\subseteq U$ and $[T, T, U]\subseteq U$ also hold. 

An ideal $U$ of $T$ is \emph{abelian} 
if $U*U=0$ and $[T, U, U]=0$. 
It follows that $[U, T, U]=[U, U, T]=0$.
\end{definition}

\begin{definition}
For a reductive triple $(\fg, \fm, \fh)$ 
and a Lie-Yamaguti subalgebra $\fm'$ of $\fm$, 
define $L_\fg(\fm')=\fm'+[\fm', \fm']$. 
(Here, $[V, W]:=\langle\{[x, y]\mid x\in V, y\in W\}\rangle$.)
\end{definition}

\begin{definition}[{\cite[p.\ 106]{Sagle1968}, \cite[Ch. 1, \S 5, Definition 1]{Fedenko1977}}]
A reductive triple $(\fg, \fm, \fh)$ 
is called \emph{minimal} if $[\fm, \fm]_\fh=\fh$. 
By the following proposition, this is equivalent to $L_{\fg}(\fm)=\fg$. 

The triple is called \emph{effective} if the map 
$\fh\to\mathrm{Hom}(\fm, \fm)$ 
induced by the adjoint action is injective
(see Proposition \ref{prop_isom_to_L} (1) for an equivalent condition). 
\end{definition}

The following facts are also known 
(see \cite[p.\ 106]{Sagle1968}, \cite[\S 1, Proposition]{Yamaguti1969}, 
\cite[Ch. 1, \S 5, Lemma 4]{Fedenko1977}, \cite[Ch. 1, \S 8, Theorem 1]{Fedenko1977}). 

\begin{proposition}\label{prop_L_gives_subalg_ideal}
(1)
Let $(\fg, \fm, \fh)$ 
be a reductive triple. 
If $\fm'$ is a Lie-Yamaguti subalgebra of $\fm$, 
then $L_{\fg}(\fm')$ is a Lie subalgebra of $\fg$. 
The equality 
$L_{\fg}(\fm')=\fm'\oplus[\mathfrak{m'}, \mathfrak{m'}]_{\fh}$ 
holds, 
and $(L_{\fg}(\fm'), \fm', [\fm', \fm']_{\fh})$ 
is a reductive triple. 

(2)
In (1), if furthermore $\fm'$ is an ideal of $\fm$ 
and $(\fg, \fm, \fh)$ is minimal, 
then 
$\fm'+[\fm, \fm']=\fm'\oplus[\fm, \mathfrak{m'}]_{\fh}$ 
is an ideal of $\fg$, 
and $(\fm'+[\fm, \fm'], \fm', [\fm, \fm']_{\fh})$ 
is a reductive triple. 

(3)
$L_\fg(\fm)=\fm+[\fm, \fm]=\fm\oplus [\fm, \fm]_\fh$ is an ideal of $\fg$ 
and $(L_\fg(\fm), \fm, [\fm, \fm]_\fh)$ is a reductive triple. 

(4)
For a Lie-Yamaguti algebra $T$ and its subalgebra $T'$, 
we have $L_{L(T)}(T')=T'\oplus\ider_T(T')$, 
and in particular $L_{L(T)}(T)=L(T)$. 
Thus $L_{L(T)}(T')$ is a Lie subalgebra of $L(T)$ by (1). 
If $T'$ is an ideal, $T'\oplus D_T(T, T')$ is an ideal of $L(T)$ by (2). 
\end{proposition}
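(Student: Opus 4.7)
Parts (1) and (2) contain the technical core; parts (3) and (4) will then follow essentially formally. For (1), the decomposition is immediate: $[\fm',\fm']\subseteq \fm'*\fm' + [\fm',\fm']_\fh \subseteq \fm' + [\fm',\fm']_\fh$ by closure of $\fm'$ under $*$, and $\fm\cap\fh=0$ makes the sum direct. Closure of $L_\fg(\fm')$ under the $\fg$-bracket splits into three cases: $[\fm',\fm']\subseteq L_\fg(\fm')$ is definitional, and $[[\fm',\fm']_\fh,\fm']\subseteq\fm'$ follows from the reductive-triple identity $[[x,y]_\fh,z]=[x,y,z]$ together with the closure of $\fm'$ under $[\,]$. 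The main step is $[[\fm',\fm']_\fh,[\fm',\fm']_\fh]\subseteq[\fm',\fm']_\fh$. I handle it by computing the $\fh$-component of $[[x,y],[z,v]]$ (with $x,y,z,v\in\fm'$) in two ways: directly one gets $[x*y,z*v]_\fh+[[x,y]_\fh,[z,v]_\fh]$, while Jacobi gives $[[x,y],[z,v]]=[[[x,y],z],v]+[z,[[x,y],v]]$, and unpacking the brackets into $\fm$- and $\fh$-parts puts the entire $\fh$-component of the right-hand side in $[\fm',\fm']_\fh$. Solving isolates $[[x,y]_\fh,[z,v]_\fh]$ in $[\fm',\fm']_\fh$. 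The reductive-triple structure and functoriality are then routine.

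For (2), the same template applies with $\fm'$ only an ideal of $\fm$. The decomposition uses $\fm*\fm'\subseteq\fm'$. To show $I:=\fm'\oplus[\fm,\fm']_\fh$ is an ideal of $\fg$, I split $\fg=\fm\oplus\fh$. The brackets $[\fm,\fm']$ and $[\fh,\fm']$ land in $\fm'$: the first by the ideal property, the second crucially via minimality, since $\fh=[\fm,\fm]_\fh$ gives $[\fh,\fm']=[\fm,\fm,\fm']\subseteq\fm'$. The bracket $[\fm,[\fm,\fm']_\fh]\subseteq\fm'$ follows from $[z,[x,y']_\fh]=-[x,y',z]=[y',x,z]$, using LY2 and $[\fm',\fm,\fm]\subseteq\fm'$. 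Finally, $[\fh,[\fm,\fm']_\fh]\subseteq[\fm,\fm']_\fh$ is obtained by the same Jacobi unpacking as in (1), now with one of the four slots kept in $\fm'$ throughout; the $\fm'$-ideal property ensures that the unpacked terms all stay in $\fm'$ or in $[\fm,\fm']_\fh$.

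For (3), I apply (1) with $\fm'=\fm$ to get $L_\fg(\fm)=\fm\oplus[\fm,\fm]_\fh$ as a Lie subalgebra and reductive triple, and being an ideal of $\fg$ reduces to $[\fh,\fm]\subseteq\fm\subseteq L_\fg(\fm)$ (by hypothesis) and $[\fh,[\fm,\fm]]\subseteq[\fm,\fm]$ (by Jacobi, using $[\fh,\fm]\subseteq\fm$); no minimality is needed because the corresponding step of (2) becomes trivial when $\fm'=\fm$. For (4), the standard triple $(L(T),T,\ider(T))$ is minimal by construction, since $\ider(T)$ is spanned by the $D_{x,y}=[x,y]_{\ider(T)}$; hence $L_{L(T)}(T')=T'\oplus\ider_T(T')$ is (1), the case $T'=T$ specializes to $L_{L(T)}(T)=L(T)$, and (2) yields the final ideal statement $T'\oplus D_T(T,T')$. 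The chief obstacle, shared by (1) and (2), is the Jacobi-plus-unpacking step for $[[x,y],[z,v]]$; once it is established, everything else is bookkeeping via the LY axioms (chiefly LY2 for the skew identity $[x,y',z]=-[y',x,z]$).
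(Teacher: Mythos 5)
Your plan is correct and essentially matches the paper's proof: the same direct-sum decomposition obtained from closure under $*$, the Jacobi identity for the hardest bracket $[[\fm',\fm'],[\fm',\fm']]$ (the paper deduces that the whole bracket lies in $L_\fg(\fm')$ from $[\fm',L_\fg(\fm')]\subseteq L_\fg(\fm')$, whereas you isolate the $\fh$-component by comparing the two expansions --- a harmless variant), minimality used exactly where you use it in (2), and (4) read off from the fact that the $\ider(T)$-component of $[(x,0),(y,0)]$ is $D_{x,y}$. One small imprecision in (2): $[\fm,\fm']$ is not contained in $\fm'$ but only in $\fm'\oplus[\fm,\fm']_{\fh}$ (the ideal property controls just the $\fm$-component); since that sum is precisely the subspace you are proving to be an ideal, the argument is unaffected.
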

\begin{proof}
(1)
Let $\fm'$ be a Lie-Yamaguti subalgebra. 
Then, for $x, y\in \fm'$, 
$[x, y]_{\fm}$ is equal to $x*y$ 
by the definition of the Lie-Yamaguti algebra structure on $\fm$, 
and hence belongs to $\fm'$. 
This implies 
$[x, y]=[x, y]_{\fm}+[x, y]_{\fh}
\in\fm'+[\fm', \fm']_{\fh}$ 
and 
$[x, y]_{\fh}=-[x, y]_{\fm}+[x, y]
\in\fm'+[\fm', \fm']$, 
and 
$\fm'+[\fm', \fm']=\fm'\oplus[\mathfrak{m'}, \mathfrak{m'}]_{\fh}$ 
follows. 

To show that $L_{\fg}(\fm')$ is a Lie subalgebra, 
it suffices to show that (a) $[x, y]\in L_{\fg}(\fm')$ for $x, y\in \fm'$, 
(b) $[[x, y], z]\in L_{\fg}(\mathfrak{m'})$ for $x, y, z\in\fm'$, 
and (c) $[[x, y], [z, w]]\in L_{\fg}(\mathfrak{m'})$ 
for $x, y, z, w\in\fm'$.  

The assertion (a) is clear from $L_{\fg}(\fm')=\fm'+[\fm', \fm']$.
For (b), we have $[[x, y], z]=[[x, y]_{\fm}, z]+[[x, y]_{\fh}, z]$ 
and the first term is in $[\fm', \fm']$ since $[x, y]_{\fm}=x*y\in\fm'$. 
The second term is $[x, y, z]$ by the definition of the triple product on $\fm$, 
which is in $\fm'$. 
From (a) and (b) it follows that 
$[\fm', L_\fg(\mathfrak{m'})]\subseteq L_\fg(\mathfrak{m'})$. 

For (c), 
if we take $x, y, z, w\in\fm'$, by Jacobi identity we have 
\[
[[x, y], [z, w]]= [[[x, y], z], w] + [z, [[x, y], w]], 
\]
and repeatedly applying 
$[\fm', L_\fg(\mathfrak{m'})]\subseteq L_\fg(\mathfrak{m'})$, 
we see that this belongs to $L_{\fg}(\fm')$. 

From the assumption that $(\fg, \fm, \fh)$ 
is a reductive triple 
and equalities $L_\fg(\fm')\cap \fm=\fm'$ 
and $L_{\fg}(\fm')\cap \fh=[\fm', \fm']_{\fh}$, 
it is straightforward to show the statement that 
$(L_{\fg}(\fm'), \fm', [\fm', \fm']_{\fh})$ 
is a reductive triple. 

\smallbreak
(2), (3) 
Let $\fa=\fm'+[\fm, \fm']$, 
with $\fm'=\fm$ in the case of (3). 
As in (1), for $x\in\fm$ and $y\in\fm'$ we have 
$[x, y]_{\fm}\in \fm'$ 
and 
$\fa=\fm'\oplus[\fm, \mathfrak{m'}]_{\fh}$
follows. 
Once we show that $\fa$ is an ideal, 
this gives a reductive decomposition as in (1). 

In (2), 
we see $[\fm, \fm']\subseteq\fa$, 
$[\fm, [\fm, \mathfrak{m'}]]\subseteq\fa$ 
and $[[\fm, \fm], \mathfrak{m'}]\subseteq\fa$ 
as in (1), 
and hence 
$[\fg, \fm']= [\fm, \fm']+[[\fm, \fm], \fm']\subseteq \fa$ 
and $[\fm, \fa]= [\fm, \fm']+[\fm, [\fm, \fm']]\subseteq\fa$. 
Thus, for any $x, y, z\in\fm$ and $w\in\fm'$, 
we have $[[[x, y], z], w]\in[\fg, \fm']\subseteq\fa$ 
and $[z, [[x, y], w]]\in [\fm, [\fg, \fm']]
\subseteq [\fm, \fa]\subseteq\fa$, 
and hence $[[x, y], [z, w]]\in\fa$. 
It follows that 
\[
[\fg, [\fm, \fm']]= [\fm, [\fm, \fm']]+[[\fm, \fm], [\fm, \fm']]\subseteq\fa. 
\]
For (3), from $[\fh, \fm]\subseteq\fm$ it follows that
$[\fg, \fm]= [\fm, \fm]+[\fh, \fm]\subseteq[\fm, \fm]+\fm=\fa$. 
Then we have
\[
[\fg, [\fm, \fm]]\subseteq [[\fg, \fm], \fm]+[\fm, [\fg, \fm]]\subseteq [\fg, \fm]+[\fm, \fg]\subseteq \fa, 
\]
and hence $[\fg, \fa]\subseteq \fa$. 

\smallbreak
(4)
This is obvious 
since the $\ider(T)$-component of $[(x, 0), (y, 0)]$ is $D_{x, y}$ 
by the definition of the Lie bracket on $L(T)$. 
\end{proof}

\begin{proposition}\label{prop_isom_to_L}
(1)
A reductive triple $(\fg, \fm, \fh)$ is effective 
if and only if $\fh$ contains no nonzero ideal of $\fg$. 

(2)
Let $(\fg, \fm, \fh)$ 
be a minimal reductive triple. 
Then any ideal of $\fg$ contained in $\fh$ is 
contained in the center of $\fg$. 
Hence, if $\fg$ is a Lie algebra whose center is trivial 
then the triple is also effective. 

(3)
If $(\fg, \fm, \fh)$ 
is a minimal reductive triple, 
then there exists a unique homomorphism 
$(\fg, \fm, \fh)\to (L(\fm), \fm, \ider(\fm))$ 
restricting to the identity map on $\fm$, 
and it is surjective. 
If the triple is also effective, 
then it is an isomorphism. 

(4)
If $(\fg, \fm, \fh)$ 
is an effective reductive triple, 
then there exists a unique homomorphism 
$(L(\fm), \fm, \ider(\fm))\to(\fg, \fm, \fh)$ 
restricting to the identity map on $\fm$, 
and it is injective. 
If the triple is also minimal, 
then it is an isomorphism. 
\end{proposition}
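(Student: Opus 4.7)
I would handle the four parts in order, with each building on structural facts from the previous; the main workhorse throughout is the decomposition $\fg = \fm \oplus \fh$ together with the Jacobi identity.

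For (1), the plan is to identify the kernel $K = \{h \in \fh : [h, \fm] = 0\}$ of the adjoint map $\fh \to \mathrm{End}(\fm)$ as the largest ideal of $\fg$ contained in $\fh$. Any ideal $\fa \subseteq \fh$ satisfies $[\fa, \fm] \subseteq \fa \cap \fm = 0$, so $\fa \subseteq K$. Conversely, Jacobi gives $[[h', h], x] = [h', [h, x]] - [h, [h', x]] = 0$ for $h \in K$, $h' \in \fh$, $x \in \fm$ (both terms vanish, since $[h, x] = 0$ and $[h', x] \in \fm$), so $[\fh, K] \subseteq K$; combined with $[\fm, K] = 0$, this makes $K$ an ideal of $\fg$. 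Effectivity is $K = 0$, giving the equivalence. For (2), given an ideal $\fa \subseteq \fh$ of $\fg$, we have $[\fa, \fm] = 0$ as above, so $[a, x] = 0$ for $a \in \fa$ and $x \in \fm$. Then for $x, y \in \fm$, Jacobi yields $[a, [x, y]] = [[a, x], y] + [x, [a, y]] = 0$, and since $[a, [x, y]_\fm] = 0$ as well, we get $[a, [x, y]_\fh] = 0$. Minimality $\fh = [\fm, \fm]_\fh$ then gives $[\fa, \fh] = 0$, so $\fa$ is central. The effectivity claim follows from (1).

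For (3), the pivotal computation is $\mathrm{ad}([x, y]_\fh)(z) = [[x, y]_\fh, z] = [x, y, z] = D_{x, y}(z)$ for $x, y, z \in \fm$; under minimality this shows $\mathrm{ad}(h)|_\fm \in \ider(\fm)$ for every $h \in \fh$. I would then define $\varphi : \fg \to L(\fm)$ by $\varphi(x + h) = (x, \mathrm{ad}(h)|_\fm)$. The Lie-algebra-homomorphism property reduces to a term-by-term comparison of the bracket on $\fg = \fm \oplus \fh$ and the formula \eqref{eq_lt} defining the bracket on $L(\fm)$; both sides produce $\fm$-part $x_1 * x_2 + h_1(x_2) - h_2(x_1)$ and $\ider(\fm)$-part $D_{x_1, x_2} + [\mathrm{ad}(h_1)|_\fm, \mathrm{ad}(h_2)|_\fm]$. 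Surjectivity is $\varphi(\fh) = \ider(\fm)$, again by minimality; uniqueness follows since any lift must send $h$ to an element acting on $\fm$ as $\mathrm{ad}(h)|_\fm$, and $\ider(\fm) \hookrightarrow \mathrm{End}(\fm)$ is tautologically injective. Under additional effectivity, $\ker \varphi \subseteq \fh$ is an ideal of $\fg$, hence zero by (1), and $\varphi$ becomes an isomorphism.

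The main obstacle, I expect, is well-definedness of the inverse $\psi : L(\fm) \to \fg$ for part (4), defined on generators by $\psi(x, 0) = x$ and $\psi(0, D_{x, y}) = [x, y]_\fh$. A single element of $\ider(\fm)$ admits many presentations $\sum c_i D_{x_i, y_i}$, and $\sum c_i [x_i, y_i]_\fh \in \fh$ had better depend only on the sum in $\ider(\fm)$. Effectivity is precisely what ensures this: the identity $D_{x_i, y_i} = \mathrm{ad}([x_i, y_i]_\fh)|_\fm$ from part (3) shows that if $\sum c_i D_{x_i, y_i} = 0$ in $\mathrm{End}(\fm)$, then $\sum c_i [x_i, y_i]_\fh \in \fh$ acts as zero on $\fm$ and is therefore zero. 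Once this is settled, the Lie-algebra-homomorphism property for $\psi$ is another term-by-term check against \eqref{eq_lt}; injectivity follows from injectivity of the two restrictions $\psi|_\fm$ and $\psi|_{\ider(\fm)}$ combined with $\psi(\fm) \subseteq \fm$ and $\psi(\ider(\fm)) \subseteq \fh$. Minimality, when present, yields surjectivity since the image contains $\fm + [\fm, \fm]_\fh = \fg$.
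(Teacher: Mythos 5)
Your proposal is correct and follows essentially the same route as the paper: the kernel ideal $\{h\in\fh \mid [h,\fm]=0\}$ for (1), the Jacobi identity plus minimality for (2), and the key identity $\mathrm{ad}([x,y]_\fh)|_\fm = D_{x,y}$ underlying the map $x+h\mapsto (x,\mathrm{ad}(h)|_\fm)$ for (3) and (4). The only cosmetic difference is that the paper obtains the homomorphism of (4) by inverting this injective map on $L_\fg(\fm)$, whereas you define it directly on generators and use effectivity for well-definedness, which also takes care of the uniqueness in (4) since any such homomorphism is forced to send $D_{x,y}$ to $[x,y]_\fh$.
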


\begin{proof}
(1)
Let $\fa=\{h\in\fh\mid [h, \fm]=0\}$. 
Then $\fa$ is an ideal of $\fg$: 
obviously $[\fa, \fm]=0$ holds, and 
from
\[
[[\fh, \fa], \fm]\subseteq [\fa, [\fh, \fm]]+[\fh, [\fa, \fm]]
\subseteq [\fa, \fm] + [\fh, 0]=0, 
\]
$[\fh, \fa]\subseteq \fa$ holds. 
Thus, if the triple is not effective, $\fh$ contains a nonzero ideal. 

If $\fa\subseteq\fh$ is an ideal of $\fg$, 
then $[\fa, \fm]\subseteq \fa\cap [\fh, \fm]\subseteq \fa\cap \fm=0$ holds. 
Thus, if $\fa\not=0$, the triple is not effective. 

\smallbreak
(2)
Let $(\fg, \fm, \fh)$ be minimal and $\fa\subseteq \fh$ an ideal of $\fg$. 
Then $[\fa, \fm]=0$ by the proof of (1), 
and by Jacobi identity we also have $[\fa, [\fm, \fm]]=0$. 
Thus $\fa$ is contained in the center of $\fg$. 
In particular, if the center of $\fg$ is trivial, then $\fa=0$ 
and $(\fg, \fm, \fh)$ is effective by (1). 

\smallbreak
(3), (4)
We define a linear map $f: \fg\to \fm\oplus \mathrm{End}(\fm)$ 
by $f(x+h)=(x, \mathrm{ad}(h)|_\fm)$ 
for $x\in\fm$ and $h\in\fh$. 

We first show that $f([x, y]_\fh)=D_{x, y}\in \ider(\fm)$ for $x, y\in\fm$. 
For $z\in \fm$, we have
\[
\mathrm{ad}([x, y]_\fh)(z) = [[x, y]_\fh, z]=[x, y, z] = D_{x, y}(z) 
\]
by the definition of the triple product on $\fm$, 
and $f([x, y]_\fh)=\mathrm{ad}([x, y]_\fh)|_{\fm}=D_{x, y}$ holds. 

If $(\fg, \fm, \fh)$ is minimal, then it follows that 
$\mathrm{Im}(f)=\fm\oplus \ider(\fm)= L(\fm)$, 
and $f$ can be regarded as a surjective map $\fg\to L(\fm)$. 
On the other hand, if $(\fg, \fm, \fh)$ is effective, 
then $f$ is injective, and since $f(L_\fg(\fm))=f(\fm+[\fm, \fm]_\fh)=L(\fm)$, 
there is a right inverse $g: L(\fm)\to \fg$ 
with $\mathrm{Im}(g)=L_\fg(\fm)$. 

Let us show that $f|_{L_\fg(\fm)}$ is a homomorphism of Lie algebras. 
For $x, y\in\fm$, we have 
$f([x, y])=f(x*y+[x, y]_\fh)=(x*y, \mathrm{ad}([x, y]_\fh)|_\fm)$, 
while $[f(x), f(y)]=[(x, 0), (y, 0)]=(x*y, D_{x, y})$, 
and these are equal by what we saw above. 
If $x\in\fm$ and $h\in\fh$, 
$f([x, h])=([x, h], 0)$ since $[x, h]\in\fm$, 
and 
$[f(x), f(h)]=[(x, 0), (0, \mathrm{ad}(h)|_\fm)]=(-\mathrm{ad}(h)|_\fm(x), 0)=(-[h, x], 0)=f([x, h])$. 
If $h, h'\in\fh$, 
$f([h, h'])=(0, \mathrm{ad}([h, h'])|_\fm)$ 
while $[f(h), f(h')]=(0, [\mathrm{ad}(h)|_\fm, \mathrm{ad}(h')|_\fm])$, 
and these are equal by Jacobi identity. 
The calculation for $[f(x), f(y)]$ also shows the uniqueness of $f|_{L_\fg(\fm)}$ or $g$ 
assuming that it is a homomorphism and restricts to the identity on $\fm$. 

In (3), the kernel of $f$ is an ideal included in $\fh$. 
Thus, if $(\fg, \fm, \fh)$ is effective, 
$\mathrm{Ker}\, f$ must be $0$ by (1) and $f$ is an isomorphism onto $L(\fm)$. 
In (4), if $(\fg, \fm, \fh)$ is minimal, 
then $L_\fg(\fm)=\fg$ and $g$ is an isomorphism. 
\end{proof}

\subsection{Infinitesimal $s$-manifolds}

\begin{definition}[{\cite[Definition III.20]{Kowalski1980}}]\label{def_inf_s_mfd}
(1)
An \emph{infinitesimal $s$-manifold} 
is a pair $(T, \sigma)$ of a Lie-Yamaguti algebra $T$ 
and a linear map $\sigma: T\to T$ 
satisfying the following. 
\begin{itemize}
\item[(ISM0)]
Both $\sigma$ and $\mathrm{id}_T-\sigma$ are invertible. 
\item[(ISM1)]
$\sigma(x * y) = \sigma(x) * \sigma(y)$. 
\item[(ISM2)]
$\sigma([x, y, z]) = [\sigma(x), \sigma(y), \sigma(z)]$. 
\item[(ISM3)]
$\sigma([x, y, z]) = [x, y, \sigma(z)]$. 
\end{itemize}

(2)
A \emph{homomorphism of infinitesimal $s$-manifolds} from $(T, \sigma)$ to $(T', \sigma')$ is 
a homomorphism $f: T\to T'$ of Lie-Yamaguti algebras
satisfying $f\circ \sigma=\sigma'\circ f$. 
\end{definition}

Just as a reductive triple gives rise to a Lie-Yamaguti algebra, 
an infinitesimal $s$-manifold is associated to a certain pair of 
a Lie algebra and its automorphism. 
\begin{definition}
(\cite{Fedenko1977}, \cite[Definition III.29]{Kowalski1980})
(1) 
A \emph{local regular $s$-pair} is a pair $(\fg, \varphi)$ 
satisfying the following conditions: 
\begin{itemize}
\item
$\fg$ is a Lie algebra. 
\item
$\varphi$ is an automorphism of $\fg$. 
\item
the fixed part $\fh:=\fg^\varphi$ 
is equal to the generalized eigenspace of $\varphi$ for $1$. 
\end{itemize}

(2)
A \emph{homomorphism $(\fg, \varphi)\to (\fg', \varphi')$ of local regular $s$-pairs} 
is a homomorphism of Lie algebras 
$\fg\to \fg'$ which commutes with $\varphi$ and $\varphi'$. 
\end{definition}

\begin{remark}
In \cite{Fedenko1977}, \cite{Kowalski1980} and \cite{Takahashi2021}, 
the triple $(\fg, \fh, \varphi)$ is considered and is called a local regular $s$-triplet. 
Since it is actually not necessary to specify $\fh$, 
here we consider the pair $(\fg, \varphi)$. 
\end{remark}

\begin{construction}[{See e.g.\ \cite[Construction 2.21]{Takahashi2021}}]\label{const_ism_from_lrst}
Let $(\fg, \varphi)$ be a local regular $s$-pair. 
There is a canonical decomposition $\fg=\fm\oplus\fh$ as a vector space, 
where $\fh=\fg^\varphi$ and $\fm$ is the sum of 
the generalized eigenspaces of $\varphi$ corresponding to eigenvalues different from $1$. 
In other words, $\fh=\mathrm{Ker}(\varphi-1)$ and $\fm=\mathrm{Im}(\varphi-1)$. 
Then $(\fg, \fm, \fh)$ is a reductive triple, 
and we call it the \emph{associated reductive triple}, 
$\fg=\fm\oplus \fh$ the \emph{associated reductive decomposition}. 

We define operations as follows
(see Proposition \ref{prop_ly_from_rt}): 
for $x, y, z\in\fm$, 
\begin{eqnarray*}
\sigma(x) & := & \varphi(x), \\
x*y & := & [x, y]_\fm, \\
 {[}x, y, z{]}  & := & [[x, y]_\fh, z]. 
\end{eqnarray*}
Then $((\fm, *, [\ ]), \sigma)$ is an infinitesimal $s$-manifold. 
\end{construction}

\medbreak

We recall the definitions of quandles, smooth quandles 
and regular $s$-manifolds, 
although they are only needed for examples. 

\begin{definition}\label{def_quandle}
A \emph{quandle} is a set $X$ endowed with a binary operation $\rhd$ 
satisfying the following conditions. 
\begin{enumerate}
\item
For any $x, y\in X$, 
there exists a unique element $y'\in X$ such that $x\rhd y'=y$. 
\item
For any $x, y, z\in X$, 
$x\rhd(y\rhd z)=(x\rhd y)\rhd(x\rhd z)$ holds. 
\item
For any $x\in X$, $x\rhd x=x$ holds. 
\end{enumerate}
We define $s_x: X\to X$ by $s_x(y)=x\rhd y$, 
which is bijective by (1). 
\end{definition}

\begin{definition}\label{def_reg_s_mfd}
(1)
A \emph{smooth quandle} is a smooth manifold $Q$ 
equipped with a quandle operation $\rhd$ 
such that $Q\times Q\to Q\times Q; (q, r)\mapsto (q, q\rhd r)$ 
is a diffeomorphism. 

(2)
(\cite[Definition II.2]{Kowalski1980})
A \emph{regular $s$-manifold} is a smooth quandle $Q$ 
such that $\mathrm{id}_{T_qQ}-d_qs_q\in\mathrm{End}(T_qQ)$ is invertible 
for any $q\in Q$. 
\end{definition}

The following theorem 
describes connected regular $s$-manifolds 
in terms of Lie groups. 
See \cite[Chapter II]{Kowalski1980} for details, 
or \cite[Theorem 2.15]{Takahashi2021} for a summary. 

\begin{theorem}[{cf. \cite[Theorem II.32]{Kowalski1980}}]\label{thm_reg_s_mfd}
If $Q$ is a connected regular $s$-manifold and $q\in Q$ is a point, 
then there is a connected Lie group $G$ acting on $Q$ 
and an automorphism $\Phi$ of $G$ 
with the following properties. 
\begin{itemize}
\item
The action of $G$ on $Q$ is transitive, 
and $\Phi$ acts trivially on $H:=\mathrm{Stab}_G(q)$. 
\item
$Q$ is isomorphic to $G/H$ with the operation $\rhd_{\Phi}$ 
defined by 
\[
xH\rhd_{\Phi} yH:=x\Phi(x^{-1}y)H. 
\]
\end{itemize}
\end{theorem}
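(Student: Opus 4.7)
My plan is to take $G$ to be the \emph{transvection group} of $Q$, i.e.\ the subgroup of $\mathrm{Aut}(Q)$ generated by composites $s_p s_r^{-1}$ for $p, r\in Q$, and to define $\Phi$ as conjugation by $s_q$.

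First, I would verify that $G$ is a connected Lie group acting smoothly and transitively on $Q$. Smoothness of the action is inherited from the smoothness of each $s_p$. Connectedness of $G$ follows from connectedness of $Q$, since any generator $s_p s_r^{-1}$ can be joined to the identity by a continuous family obtained by moving $p$ along a path to $r$. For transitivity, the invertibility of $\mathrm{id}-d_q s_q$ implies that one-parameter families of transvections $s_{\gamma(t)}s_q^{-1}$ have derivative at $t=0$ that surjects onto $T_qQ$, so the $G$-orbit through $q$ is open in the connected manifold $Q$ and must be all of $Q$. Finite-dimensionality of $G$ follows by identifying its Lie algebra with the standard enveloping Lie algebra $L(T_qQ)$ of the infinitesimal $s$-manifold $(T_qQ, d_q s_q)$ via integration.

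Next, set $\Phi(g) := s_q g s_q^{-1}$. Since the transvection group is normal in $\mathrm{Aut}(Q)$, $\Phi$ preserves $G$. The key observation is that $\Phi$ restricts to the identity on $H := \mathrm{Stab}_G(q)$: any $g\in H$ is a quandle automorphism with $g(q)=q$, hence satisfies $g\circ s_q = s_{g(q)}\circ g = s_q\circ g$, which gives $\Phi(g)=g$. The orbit map $\pi: G/H \to Q$, $gH\mapsto g(q)$, is then a $G$-equivariant diffeomorphism by the standard theory of homogeneous spaces, and for $x, y\in G$ a direct computation using the equivariance $s_{x(q)} = x s_q x^{-1}$ together with $s_q(q)=q$ gives
\[
s_{x(q)}(y(q)) = x s_q x^{-1} y(q) = x s_q x^{-1} y s_q^{-1}(q) = x\Phi(x^{-1}y)(q),
\]
which exhibits $\pi$ as a quandle isomorphism from $(G/H, \rhd_\Phi)$ onto $(Q, \rhd)$. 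One can also check that $\rhd_\Phi$ is well-defined on $G/H$ precisely because $\Phi|_H=\mathrm{id}$, closing the circle.

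The main obstacle is the analytic step of equipping the abstractly defined transvection group with a Lie group structure that is compatible with its action on $Q$. The natural route is to first integrate $L(T_qQ)$ to a simply connected Lie group $\tilde G$ carrying an automorphism $\tilde\Phi$ that lifts $L(d_q s_q)$, construct a local quandle isomorphism near $q$ from the model $\tilde G/\tilde H$ to $Q$, and globalize by a monodromy argument that uses connectedness of $Q$ and the completeness built into regularity. These details are worked out in \cite[Chapter II]{Kowalski1980}, and our sketch essentially mirrors that treatment.
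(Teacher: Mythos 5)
The paper does not actually prove this statement --- it is quoted from Kowalski (Theorem II.32) with a pointer to Chapter II of that book for details --- and your sketch follows exactly that standard route: the transvection group with $\Phi$ given by conjugation by $s_q$, the identity $g\circ s_q=s_{g(q)}\circ g$ for automorphisms fixing $q$ (hence $\Phi|_H=\mathrm{id}$ and well-definedness of $\rhd_\Phi$), and transitivity via invertibility of $\mathrm{id}-d_qs_q$ making the orbits open. Your algebraic verifications are correct, and deferring the analytic point (the Lie group structure on the transvection group compatible with its action) to Kowalski is consistent with how the paper itself treats the theorem.
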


We explain how to define a structure of infinitesimal $s$-manifold 
on $T_qQ$ from the above description. 
There is also a direct construction from differential geometry (\cite{Kowalski1980}). 

\begin{construction}\label{const_ism_from_rsm}
For a connected regular $s$-manifold $Q$, 
let $G$ and $\Phi$ be as in the previous theorem. 
Let $\fg:=\mathrm{Lie}(G)$ be the associated Lie algebra 
and $\varphi:=d_e\Phi$. 
Then $(\fg, \varphi)$ is a local regular $s$-pair. 

Applying Construction \ref{const_ism_from_lrst}, 
we have a reductive triple $(\fg, \fm, \fh)$ 
and then a structure of Lie-Yamaguti algebra on $\fm$, 
which together with $\sigma:=\varphi|_\fm$ 
forms an infinitesimal $s$-manifold. 
Note that $\fm$ and $\sigma$ can be identified with $T_qQ$ and $d_qs_q$, 
and $\fh=\mathrm{Lie}(H)$ for the Lie subgroup $H\subseteq G$ in the previous theorem. 
\end{construction}

\section{$L$-Semisimplicity}

As in the case of Lie algebras, 
it is expected that representations of a Lie-Yamaguti algebra $T$ are easier to understand 
when $T$ is ``simple'' or ``semisimple'' in a certain sense. 
In this paper, we consider the condition that $L(T)$ is simple or semisimple. 

\begin{definition}
We say that a Lie-Yamaguti algebra $T$ is \emph{$L$-semisimple} (resp. \emph{$L$-simple})
if its standard enveloping Lie algebra $L(T)$ is semisimple (resp. simple).

We say that an infinitesimal $s$-manifold $(T, \sigma)$ is \emph{$L$-semisimple} (resp. \emph{$L$-simple})
if its underlying Lie-Yamaguti algebra $T$ is $L$-semisimple (resp. \emph{$L$-simple}). 
\end{definition}

We prove that ``standard enveloping Lie algebra'' in the definition can be replaced 
by any enveloping Lie algebra. 

\begin{proposition}\label{prop_semisimple}
(1)
A Lie-Yamaguti algebra $T$ is $L$-semisimple 
if (and only if) there is a reductive triple $(\fg, \fm, \fh)$ 
such that $\fg$ is a semisimple Lie algebra 
and $T\cong \fm$ as Lie-Yamaguti algebras. 

(2) (\cite[Proposition 1.2 (ii)]{BDE2005})
A Lie-Yamaguti algebra $T$ is $L$-simple 
if (and only if) there is a reductive triple $(\fg, \fm, \fh)$ 
such that $\fg$ is simple, $\fm\not=0$ 
and $T\cong \fm$ as Lie-Yamaguti algebras. 
\end{proposition}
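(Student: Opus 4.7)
The ``only if'' directions of both statements are immediate: the standard triple $(L(T), T, \ider(T))$ is a reductive triple whose underlying Lie-Yamaguti algebra is $T$, so if $T$ is $L$-semisimple (resp. $L$-simple) one can take $(\fg, \fm, \fh) = (L(T), T, \ider(T))$. The substance lies in the ``if'' directions, and the plan is to reduce an arbitrary reductive triple to the standard one via Propositions \ref{prop_L_gives_subalg_ideal} and \ref{prop_isom_to_L}.

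Suppose we are given a reductive triple $(\fg, \fm, \fh)$ with $\fm \cong T$ and $\fg$ semisimple. First I would pass to the sub-triple $(\fg', \fm, \fh')$ with $\fg' := L_\fg(\fm) = \fm \oplus [\fm,\fm]_\fh$ and $\fh' := [\fm,\fm]_\fh$. By Proposition \ref{prop_L_gives_subalg_ideal}(3) this is a reductive triple and $\fg'$ is an ideal of $\fg$; since ideals of semisimple Lie algebras are semisimple, $\fg'$ is semisimple. By construction the new triple is minimal, and since a semisimple Lie algebra has trivial center, Proposition \ref{prop_isom_to_L}(2) guarantees that the triple is also effective. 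Then Proposition \ref{prop_isom_to_L}(3) (or (4)) produces an isomorphism $(\fg', \fm, \fh') \xrightarrow{\sim} (L(T), T, \ider(T))$ restricting to the identity on $T$, whence $L(T) \cong \fg'$ is semisimple.

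For part (2), assume in addition that $\fg$ is simple and $\fm \neq 0$. The previous construction still produces the ideal $\fg' = L_\fg(\fm)$ of $\fg$, but simplicity of $\fg$ forces $\fg' = 0$ or $\fg' = \fg$; since $\fg' \supseteq \fm \neq 0$, we must have $\fg' = \fg$, so the original triple is already minimal. As above, effectiveness follows from the center of $\fg$ being trivial, so Proposition \ref{prop_isom_to_L}(3) gives $\fg \cong L(T)$, and $L(T)$ is simple.

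The one step that requires a bit of care, and which I would expect to be the only possible obstacle, is verifying that $\fg'$ really is semisimple (resp.\ that the identification $\fg' \cong L(T)$ preserves simplicity). Both rely on the fact that $\fg' \trianglelefteq \fg$, which is exactly the content of Proposition \ref{prop_L_gives_subalg_ideal}(3); everything else is a matter of assembling the cited propositions in the correct order. The argument is uniform in both cases, the only difference being that in the simple case we automatically have $\fg' = \fg$ so no sub-triple is needed.
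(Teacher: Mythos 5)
Your proposal is correct and follows essentially the same route as the paper: the paper's Lemma \ref{lem_semisimple} passes to the ideal $L_\fg(\fm)=\fm\oplus[\fm,\fm]_\fh$ via Proposition \ref{prop_L_gives_subalg_ideal}(3), notes it is semisimple (resp.\ equals $\fg$ in the simple case), and then uses triviality of the center together with Proposition \ref{prop_isom_to_L}(2),(3) to identify it with $(L(\fm),\fm,\ider(\fm))$, exactly as you do.
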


This is immediate from the following more precise statement. 

\begin{lemma}\label{lem_semisimple}
Let $\fg$ be a semisimple Lie algebra and 
$(\fg, \fm, \fh)$ a reductive triple. 

(1)
There is an isomorphism 
$(L(\fm), \fm, \ider(\fm))\overset{\sim}{\to}(L_\fg(\fm), \fm, [\fm, \fm]_\fh)$ 
of reductive triples 
that extends the identity map on $\fm$, 
and $L_\fg(\fm)$ is an ideal of $\fg$. 
Hence $L(\fm)$ is semisimple. 

(2) (\cite[Proposition 1.2 (ii)]{BDE2005})
If $\fg$ is simple and $\fm\not=0$, 
then $L_\fg(\fm)=\fg$ 
and hence $(L(\fm), \fm, \ider(\fm))\cong (\fg, \fm, \fh)$ holds. 
\end{lemma}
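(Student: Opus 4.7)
The plan is to deduce the lemma from Propositions \ref{prop_L_gives_subalg_ideal} and \ref{prop_isom_to_L}, using standard facts about ideals in semisimple Lie algebras.

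For (1), I would first invoke Proposition \ref{prop_L_gives_subalg_ideal}(3), which already tells us that $L_\fg(\fm) = \fm \oplus [\fm,\fm]_\fh$ is an ideal of $\fg$ and that $(L_\fg(\fm), \fm, [\fm,\fm]_\fh)$ is a reductive triple. The main work is then to verify that this triple is both minimal and effective, so that Proposition \ref{prop_isom_to_L}(3) produces an isomorphism extending the identity on $\fm$. Minimality is essentially tautological: with $\fh' := [\fm,\fm]_\fh$ one has $[\fm,\fm]_{\fh'} = \fh'$ by construction.

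For effectiveness, I would use that $L_\fg(\fm)$, being an ideal of the semisimple Lie algebra $\fg$, is itself semisimple and hence has trivial center. Then Proposition \ref{prop_isom_to_L}(2), applied to the minimal triple $(L_\fg(\fm), \fm, [\fm,\fm]_\fh)$, forces every ideal of $L_\fg(\fm)$ contained in $[\fm,\fm]_\fh$ to be central and therefore zero; by Proposition \ref{prop_isom_to_L}(1) the triple is effective. Proposition \ref{prop_isom_to_L}(3) now yields the desired isomorphism $(L(\fm), \fm, \ider(\fm)) \overset{\sim}{\to} (L_\fg(\fm), \fm, [\fm,\fm]_\fh)$, and since $L(\fm) \cong L_\fg(\fm)$ is an ideal of the semisimple $\fg$, it is semisimple.

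For (2), the assumption that $\fg$ is simple with $\fm \neq 0$ means that $L_\fg(\fm)$ is a nonzero ideal of $\fg$, hence $L_\fg(\fm) = \fg$. Comparing $L_\fg(\fm) = \fm \oplus [\fm,\fm]_\fh$ with $\fg = \fm \oplus \fh$ forces $[\fm,\fm]_\fh = \fh$, so the isomorphism from (1) takes the form $(L(\fm), \fm, \ider(\fm)) \cong (\fg, \fm, \fh)$. There is no serious obstacle in the argument; the only point requiring a bit of care is the chain ``semisimple $\fg$ $\Rightarrow$ $L_\fg(\fm)$ semisimple $\Rightarrow$ center trivial $\Rightarrow$ effective,'' which is what allows us to upgrade the surjection of Proposition \ref{prop_isom_to_L}(3) to an isomorphism.
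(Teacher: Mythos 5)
Your proposal is correct and follows essentially the same route as the paper: Proposition \ref{prop_L_gives_subalg_ideal} (3) gives the ideal $L_\fg(\fm)=\fm\oplus[\fm,\fm]_\fh$ and a (tautologically minimal) reductive triple, semisimplicity of the ideal gives trivial center and hence effectiveness, and Proposition \ref{prop_isom_to_L} (2), (3) upgrades the canonical surjection to an isomorphism. Your treatment of (2) also matches the paper's one-line argument that a nonzero ideal of a simple $\fg$ must be all of $\fg$.
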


\begin{proof}
From Proposition \ref{prop_L_gives_subalg_ideal} (3), 
$L_\fg(\fm)=\fm\oplus[\fm, \fm]_\fh$ 
is an ideal of $\fg$ and 
$(L_\fg(\fm), \fm, [\fm, \fm]_\fh)$ is a minimal reductive triple. 
If $\fg$ is semisimple (resp. $\fg$ is simple and $\fm\not=0$), 
its ideal $L_\fg(\fm)$ is also semisimple 
(resp. $L_\fg(\fm)=\fg$, 
since $L_\fg(\fm)\supseteq\fm\not=0$). 
In particular, the center of $L_\fg(\fm)$ is trivial, 
and $L_\fg(\fm)$ is isomorphic to $L(\fm)$ by Proposition \ref{prop_isom_to_L} (2), (3). 
\end{proof}

\begin{example}[Conjugation quandles]
Let $G$ be a connected Lie group 
and $A\in G$ an element. 
Write $\fg$ for the Lie algebra of $G$, 
and $\varphi:=\mathrm{Ad}_\fg(A)$. 
Assume that the fixed part $\fh=\fg^\varphi$ 
is equal to the generalized eigenspace of $\varphi$ corresponding to $1$. 

Then the conjugacy class $Q_A$ of $A$ is a connected smooth $s$-manifold 
with the operation $X\rhd Y=XYX^{-1}$. 
The group $G$ acts on $Q_A$ by conjugation $M\cdot X=MXM^{-1}$, 
and the stabilizer of $A$ is the centralizer $H=\{M\in G\mid MA=AM\}$. 
If $\Phi: G\to G$ denotes the inner automorphism $M\mapsto AMA^{-1}$, 
then $Q_A$ is isomorphic to the regular $s$-manifold associated to $(G, H, \Phi)$, 
i.e. $G/H$ endowed with the operation $MH\rhd NH=M\Phi(M^{-1}N)H$. 
Thus the associated infinitesimal $s$-manifold 
is given by the local regular $s$-pair $(\fg, \varphi)$, 
with the associated reductive decomposition $\fg=\fm\oplus \fh$ 
where $\fm:=\mathrm{Im} (\varphi-1)$. 

Now assume that $\fg$ is semisimple. 
The triple $(\fg, \fm, \fh)$ is not necessarily minimal (take $A=I$ for example), 
so $L(\fm)$ may not be isomorphic to $\fg$, 
but it is isomorphic to an ideal of $\fg$ and therefore 
$\fm$ is always $L$-semisimple. 

If $\fg$ is simple, 
then we have $\fg\cong L(\fm)$ unless $\varphi=1$, i.e., $A\in Z(G)$. 
This applies 
to $G=\mathrm{SL}_n(\mathbb{R})$ and a non-scalar diagonalizable matrix $A$, 
for example.
\end{example}

\begin{example}[Lie-Yamaguti algebra structures on a Lie algebra]
\label{ex_l_to_ly}

Let $\fg$ be a Lie algebra. 
For $\alpha, \beta\in\mathbb{K}$ and $x, y, z\in\fg$, 
define $x*_\alpha y:=\alpha[x, y]$ and $[x, y, z]_\beta:=\beta[[x, y], z]$. 
Then it is easy to check that 
$\fg_{\alpha, \beta}:=(\fg, *_\alpha, [\ ]_\beta)$ is a Lie-Yamaguti algebra. 
Since $\fg_{\alpha, \beta}\cong \fg_{u\alpha, u^2\beta}$ for $u\not=0$ by scaling, 
each such Lie-Yamaguti algebra is isomorphic to $\fg_{0, 0}$, 
$\fg_{1, \beta}$ or $\fg_{0, 1}$. 

Assume that $\fg$ is a nonzero semisimple Lie algebra. 
We will show that $\fg_{\alpha, \beta}$ is $L$-semisimple if and only if $\alpha^2+4\beta\not=0$ 
(cf. \cite[Theorem 2.4]{BEMH2009}). 
Since $L(\fg_{0, 0})$ is an abelian Lie algebra, 
it suffices to show that 
$L(\fg_{1, \beta})$ is semisimple if and only if $\beta\not=-1/4$ 
and that $L(\fg_{0, 1})$ is semisimple. 

Let us write $\fg_\beta:=\fg_{1, \beta}$. 
If $\beta=0$, then the triple product is trivial 
and $L(\fg_0)$ is $\fg$, with the original Lie bracket, which is semisimple. 

Assume $\beta\not=0$ with $\alpha$ arbitrary. 
By the definition of the triple product, 
we see that 
$\fg\otimes\fg\to \ider(\fg_{\alpha, \beta}); x\otimes y\mapsto D_{x, y}$ 
factors into $\fg\otimes\fg\to [\fg, \fg]; x\otimes y\mapsto \beta[x, y]$ 
and $[\fg, \fg]\to \ider(\fg_{\alpha, \beta}); x\mapsto \mathrm{ad}_x$. 
Since $\fg$ is semisimple, 
we have $[\fg, \fg]=\fg$ 
and the latter map gives an identification of 
$\ider(\fg_{\alpha, \beta})$ with $\fg$ 
as Lie algebras. 
Thus we have a bijective linear map 
$\varphi: L(\fg_{\alpha, \beta})\to \fg\oplus\fg$ 
characterized by $\varphi((x, D_{y, z}))=(x, \beta [y, z])$, 
and the Lie bracket on $L(\fg_{\alpha, \beta})$ induces 
the operation 
\[
[(x_1, x_2), (y_1, y_2)] = (\alpha[x_1, y_1] + [x_1, y_2] + [x_2, y_1], \beta[x_1, y_1]+[x_2, y_2]) 
\]
on $\fg\oplus\fg$ via $\varphi$. 
If $\alpha=1$ and $\beta\not=0, -1/4$, let $\mu, \nu$ be the roots of $X^2-X-\beta=0$ 
and define a bijective map $\psi: \fg\oplus \fg\to \fg\oplus \fg$ by $(x_1, x_2)\mapsto (\mu x_1+x_2, \nu x_1+x_2)$. 
If we regard the right hand side as the direct sum of Lie algebras, 
then it is straightforward to see that 
$\psi\circ\varphi: L(\fg_\beta)\to \fg\oplus \fg$ 
is an isomorphism of Lie algebras mapping 
$\mathfrak{\fg_\beta}$ and $\ider(\fg_\beta)$ 
to $\{(\mu x, \nu x)\mid x\in\fg\}$ 
and $\{(x, x)\mid x\in\fg\}$, respectively. 

In the case $\alpha=1$ and $\beta=-1/4$, we define 
\[
\psi: \fg\oplus \fg \to \fg_{0, 0}\rtimes_{\mathrm{ad}} \fg; 
\ (x_1, x_2)\mapsto \left(x_1, \frac{1}{2}x_1+x_2\right), 
\]
where $\fg_{0, 0}$ is regarded as a Lie algebra with the trivial bracket 
and the right hand side has the Lie bracket 
$[(x_1, x_2), (y_1, y_2)]=([x_2, y_1] + [x_1, y_2], [x_2, y_2])$. 
Then 
$\psi\circ\varphi: L(\fg_{-1/4})\to \fg_{0, 0}\rtimes_{\mathrm{ad}} \fg$ 
is an isomorphism of Lie algebras. 
Thus $L(\fg_{-1/4})$ is not semisimple. 
In this case, 
$\fg_{-1/4}$ and $\ider(\fg_{-1/4})$ 
are mapped to $\{(x, x/2)\mid x\in\fg\}$ 
and $\{(0, x)\mid x\in\fg\}$. 

In the case $\alpha=0$ and $\beta=1$, we define a bijective map 
$\psi: \fg\oplus \fg\to \fg\oplus \fg$ by $(x_1, x_2)\mapsto (x_1+x_2, -x_1+x_2)$. 
Then 
$\psi\circ\varphi: L(\fg_{0, 1})\to \fg\oplus \fg$ 
is an isomorphism 
mapping $\fg_{0, 1}$ and $\ider(\fg_{0, 1})$ 
to $\{(x, -x)\mid x\in\fg\}$ 
and $\{(x, x)\mid x\in\fg\}$, 
and $L(\fg_{0, 1})$ is semisimple. 
This case can be regarded as the limit of $L(\fg_\beta)$ as $\beta\to\infty$. 

If $G$ is a connected Lie group whose Lie algebra is $\fg$, 
the last case corresponds to the core quandle $(G, \rhd)$ 
defined by $X\rhd Y=XY^{-1}X$. 
Indeed, this quandle is isomorphic to the homogeneous quandle 
$((G\times G)/\Delta, \rhd_\Phi)$, 
where $\Delta$ is the diagonal, 
$\Phi: G\times G\to G\times G;\ (g_1, g_2)\mapsto (g_2, g_1)$ 
and $\rhd_\Phi$ is given by $g\Delta \rhd_\Phi g'\Delta=g\Phi(g^{-1}g')\Delta$. 
The tangent space to $G\times G$ is 
$\fg\oplus\fg$, 
the tangent map $\varphi=d\Phi_{(e, e)}$ is $(x_1, x_2)\mapsto (x_2, x_1)$, 
the eigenspace of $\varphi$ corresponding to $1$ is $\{(x, x)\mid x\in\fg\}$ and 
the eigenspace corresponding to $-1$ is 
$\{(x, -x)\mid x\in\fg\}$. 
\end{example}

To conclude this section, 
let us give a number of remarks on notions related to $L$-simplicity and $L$-semisimplicity. 

\begin{remark}
(1)
Let us compare simplicity, irreducibility (see \cite{BEMH2009}) and $L$-simplicity 
of a Lie-Yamaguti algebra $T$. 

(i) (\cite[Proposition 1.2 (i)]{BDE2005})
If $T$ is $L$-simple, then $T$ is simple in the sense that 
$T$ has no ideal other than $0$ and $T$. 
This is immediate from Proposition \ref{prop_L_gives_subalg_ideal} (4). 

(ii)
$L$-simplicity does not imply irreducibility. 
Let $\fg$ be a simple Lie algebra 
and $T=\fg_{1, 0}$ with the notation of the previous example, 
i.\,e., $\fg$ regarded as a Lie-Yamaguti algebra with the trivial triple product. 
Then $T$ is $L$-simple. 
On the other hand, $\ider(T)=0$ 
and any subspace of $T$ is a $\ider(T)$-subspace. 

There are also examples with nontrivial triple products. 
Let $\fg=\mathfrak{sl}_2$ with the standard basis $e, f, h$, 
$\fm=\langle e, f\rangle$ and $\fh=\langle h\rangle$. 
Then $(\fg, \fm, \fh)$ 
is a reductive triple. 
For the induced Lie-Yamaguti algebra structure on $\fm$, 
$(L(\fm), \fm, \ider(\fm))\cong (\fg, \fm, \fh)$ holds.  
Thus $\fm$ is $L$-simple. 
On the other hand, $\langle e\rangle\subset\fm$ gives a nontrivial $\ider(\fm)$-subspace. 

(iii)
If $T$ is irreducible, then $T$ is simple. 
Assume that $T$ is irreducible. 
If $U$ is an ideal of $T$, 
then $[T, T, U]\subseteq U$ holds, that is, 
$U$ is closed under the action of $\ider(T)$. 
From the definition of irreducibility, $U$ must be $0$ or $T$. 

(iv)
Irreducibility does not imply $L$-simplicity. 
Let $\fg$ be a simple Lie algebra again, 
and let $T=\fg_{1, \beta}$ with $\beta\not=0, -1/4$. 
Then $T$ is an irreducible Lie-Yamaguti algebra of adjoint type 
(see \cite[Theorem 2.4]{BEMH2009}) 
and $L(T)\cong \fg\oplus\fg$ is not simple. 

(v)
Simplicity does not imply $L$-simplicity by (iii) and (iv). 

(vi)
Simplicity does not imply irreducibility by (i) and (ii). 

\smallbreak
(2)
For Lie triple systems, 
Lister (\cite{Lister1952}) 
defines the notion of solvability of an ideal, 
and then that of semisimplicity of a Lie triple system. 
Kikkawa generalized this to Lie-Yamaguti algebras, 
and proved that if a Lie-Yamaguti algebra is $L$-semisimple, 
then it is semisimple in the sense that 
its radical is zero (\cite[Theorem 3]{Kikkawa1979}). 
\end{remark}

\section{Representations}

We recall the definition of representations of 
Lie-Yamaguti algebras and infinitesimal $s$-manifolds. 
Then we define representations of reductive triples and local regular $s$-pairs, 
and explain how they give rise to representations of Lie-Yamaguti algebras 
and infinitesimal $s$-manifolds, respectively.

The following notion of a representation 
of a Lie-Yamaguti algebra was defined 
in \cite[\S6]{Yamaguti1969}. 

\begin{definition}\label{def_rly}
A \emph{representation of a Lie-Yamaguti algebra} $T$ is 
a quadruplet $(V, \rho, \delta, \theta)$, 
where $V$ is a finite dimensional vector space, $\rho: T\to\mathrm{End}(V)$ 
is a linear map and $\delta, \theta: T\times T\to\mathrm{End}(V)$ 
are bilinear maps, 
such that the following hold for any $x, y, z, w\in T$. 
\begin{enumerate}
\item[(RLY1)]
$\delta(x, y)+\theta(x, y)-\theta(y, x)=[\rho(x), \rho(y)]-\rho(x*y)$. 
\item[(RLY2)]
$\theta(x, y*z)-\rho(y)\theta(x, z)+\rho(z)\theta(x, y)=0$. 
\item[(RLY3)]
$\theta(x*y, z)-\theta(x, z)\rho(y)+\theta(y, z)\rho(x)=0$. 
\item[(RLY4)]
$\theta(z, w)\theta(x, y)-\theta(y, w)\theta(x, z)-\theta(x, [y, z, w])
+ \delta(y, z)\theta(x, w)=0$. 
\item[(RLY5)]
$[\delta(x, y), \rho(z)]=\rho([x, y, z])$. 
\item[(RLY6)]
$[\delta(x, y), \theta(z, w)] = \theta([x, y, z], w)+\theta(z, [x, y, w])$. 
\end{enumerate}
A \emph{homomorphism} of representations of Lie-Yamaguti algebras 
is a linear map compatible with $\rho, \delta$ and $\theta$. 
\end{definition}

\begin{remark}
(1)
As noted in \cite{Yamaguti1969}, 
we may just think about $(V, \rho, \theta)$ and define $\delta$ by (RLY1). 

(2)
We need finite dimensionality in the proof of Corollary \ref{cor_ss_section}, 
where we apply Levi's theorem. 
\end{remark}

\begin{definition}[\cite{Takahashi2021}]
\label{def_rism}
A \emph{representation of an infinitesimal $s$-manifold} $(T, \sigma)$ 
is a data $(V, \rho, \delta, \theta, \psi)$ 
where $(V, \rho, \delta, \theta)$ is a representation  of $T$ 
and $\psi\in \mathrm{End}(V)$ is an invertible linear transformation 
satisfying the following for any $x, y\in T$: 
\begin{enumerate}
\item[(RISM1)]
$\rho(\sigma(x))=\psi\circ\rho(x)\circ\psi^{-1}$. 
\item[(RISM2)]
$\theta(x, \sigma(y)) = \psi\circ \theta(x, y)$, \ 
$\theta(\sigma(x), y) = \theta(x, y)\circ\psi^{-1}$. 
\item[(RISM3)]
$\delta(x, y) = \psi\circ \delta(x, y)\circ\psi^{-1}$. 
\end{enumerate}
We will also refer to the representation as $(V, \psi)$. 
A representation $(V, \psi)$ is called \emph{regular} if $V$ is finite dimensional and 
$\mathrm{id}_V-\psi$ is invertible. 

A \emph{homomorphism} of representations of an infinitesimal $s$-manifold 
is defined as a homomorphism of representations of Lie-Yamaguti algebras 
commuting with $\psi$. 
\end{definition}

Recall that for a reductive triple $(\fg, \fm, \fh)$ 
there is a natural structure of Lie-Yamaguti algebra on $\fm$. 
In order to study representations of $\fm$ in terms of $(\fg, \fm, \fh)$, 
let us start by defining representations of the triple $(\fg, \fm, \fh)$.

\begin{definition}\label{def_rep_rt}
Let $(\fg, \fm, \fh)$ 
be a reductive triple. 

(1)
A \emph{representation} of the reductive triple $(\fg, \fm, \fh)$ 
is defined as a pair of a finite dimensional representation $M$ of $\fg$ 
and a decomposition $M=M_{\mathrm{n}}\oplus M_{\mathrm{s}}$ 
such that $\fh M_{\mathrm{n}}\subseteq M_{\mathrm{n}}$, $\fh M_{\mathrm{s}}\subseteq M_{\mathrm{s}}$ 
and $\fm M_{\mathrm{s}}\subseteq M_{\mathrm{n}}$ hold. 
(Here, ``$\mathrm{s}$'' is meant to stand for ``stabilizing.'')
We also refer to this representation as the representation $(M, M_{\mathrm{n}}, M_{\mathrm{s}})$. 

If $N\subseteq M$ is a subspace, 
not necessarily compatible with the decomposition of $M$, 
we write $N_{\mathrm{n}}$  and $N_{\mathrm{s}}$ for the images of $N$ 
under the projections to $M_{\mathrm{n}}$ and $M_{\mathrm{s}}$. 

We say that $(M, M_{\mathrm{n}}, M_{\mathrm{s}})$ is \emph{effective} if 
the natural map $M_{\mathrm{s}}\to \mathrm{Hom}(\fm, M_{\mathrm{n}})$ is injective, 
and \emph{minimal} if $(\fm M_{\mathrm{n}})_{\mathrm{s}}=M_{\mathrm{s}}$. 

(2)
A \emph{homomorphism} $(M, M_{\mathrm{n}}, M_{\mathrm{s}})\to (M', M'_{\mathrm{n}}, M'_{\mathrm{s}})$ of representations 
is a homomorphism $f: M\to M'$ of representations of $\fg$ 
satisfying $f(M_{\mathrm{n}})\subseteq M'_{\mathrm{n}}$ and $f(M_{\mathrm{s}}) \subseteq M'_{\mathrm{s}}$. 

A \emph{subrepresentation} of $(M, M_{\mathrm{n}}, M_{\mathrm{s}})$ 
is a $\fg$-subrepresentation $N$ of $M$ that is compatible with the decomposition of $M$, 
in the sense that $N=(N\cap M_{\mathrm{n}})+(N\cap M_{\mathrm{s}})$ holds. 
A \emph{quotient representation} is the quotient by a subrepresentation. 
It is easy to see that subrepresentations and quotient representations 
actually have natural structures of representations of $(\fg, \fm, \fh)$. 

Let us write the category of representations of 
a reductive triple $(\fg, \fm, \fh)$ 
as $\mathrm{Rep}(\fg, \fm, \fh)$ 
and its full subcategories of effective, minimal, and effective and minimal representations 
as $\mathrm{Rep}^{\mathrm{e}}(\fg, \fm, \fh)$, 
$\mathrm{Rep}^{\mathrm{m}}(\fg, \fm, \fh)$ and 
$\mathrm{Rep}^{\mathrm{em}}(\fg, \fm, \fh)$. 
\end{definition}

The following is straightforward. 
\begin{proposition}
The category $\mathrm{Rep}(\fg, \fm, \fh)$ is abelian. 
The full subcategories 
$\mathrm{Rep}^{\mathrm{e}}(\fg, \fm, \fh)$, 
$\mathrm{Rep}^{\mathrm{m}}(\fg, \fm, \fh)$ and 
$\mathrm{Rep}^{\mathrm{em}}(\fg, \fm, \fh)$ are 
closed under the operations of taking direct sums and direct summands. 
\end{proposition}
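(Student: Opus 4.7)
The plan is to inherit the abelian structure from $\mathrm{Rep}(\fg)$ by checking that each categorical operation respects the additional grading datum $M = M_{\mathrm{n}} \oplus M_{\mathrm{s}}$. I would first describe the biproduct componentwise: $(M \oplus M')_{\mathrm{n}} := M_{\mathrm{n}} \oplus M'_{\mathrm{n}}$ and $(M \oplus M')_{\mathrm{s}} := M_{\mathrm{s}} \oplus M'_{\mathrm{s}}$. The three stability conditions $\fh M_{\mathrm{n}} \subseteq M_{\mathrm{n}}$, $\fh M_{\mathrm{s}} \subseteq M_{\mathrm{s}}$, and $\fm M_{\mathrm{s}} \subseteq M_{\mathrm{n}}$ transfer automatically because the $\fg$-action on the biproduct is componentwise. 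The zero object is the zero module with trivial decomposition, and the additive structure on Hom sets is inherited from $\mathrm{Rep}(\fg)$ since the subset of grading-preserving maps is a subgroup.

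Next I would verify that kernels and cokernels exist and carry natural gradings. For a morphism $f : (M, M_{\mathrm{n}}, M_{\mathrm{s}}) \to (M', M'_{\mathrm{n}}, M'_{\mathrm{s}})$ the crucial observation is that $f(M_{\mathrm{n}}) \subseteq M'_{\mathrm{n}}$ and $f(M_{\mathrm{s}}) \subseteq M'_{\mathrm{s}}$ imply that $\mathrm{Ker}(f)$ splits along the grading: if $x = x_{\mathrm{n}} + x_{\mathrm{s}} \in \mathrm{Ker}(f)$, then $f(x_{\mathrm{n}}) \in M'_{\mathrm{n}}$ and $f(x_{\mathrm{s}}) \in M'_{\mathrm{s}}$ must each vanish, so $x_{\mathrm{n}}, x_{\mathrm{s}} \in \mathrm{Ker}(f)$. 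Thus $\mathrm{Ker}(f)$ is a subrepresentation in the sense of Definition \ref{def_rep_rt}, and likewise $\mathrm{Im}(f)$ and $\mathrm{Coker}(f)$ inherit gradings from $M$ and $M'$. The remaining abelian category axioms (every mono is the kernel of its cokernel, every epi is the cokernel of its kernel, coimage equals image) then reduce to the corresponding statements in $\mathrm{Rep}(\fg)$, using this grading-splitting lemma to lift them to $\mathrm{Rep}(\fg, \fm, \fh)$.

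For the three subcategories, I would unwind the effective and minimal conditions and observe that both pass through direct sums in both directions. For effectiveness, the natural map $(M \oplus M')_{\mathrm{s}} \to \mathrm{Hom}(\fm, (M \oplus M')_{\mathrm{n}})$ is literally the direct sum of the corresponding maps for $M$ and $M'$, and so is injective if and only if each summand is. For minimality, a direct computation using the componentwise $\fg$-action gives $(\fm(M_{\mathrm{n}} \oplus M'_{\mathrm{n}}))_{\mathrm{s}} = (\fm M_{\mathrm{n}})_{\mathrm{s}} \oplus (\fm M'_{\mathrm{n}})_{\mathrm{s}}$ inside $M_{\mathrm{s}} \oplus M'_{\mathrm{s}}$, so $M \oplus M'$ is minimal if and only if both $M$ and $M'$ are. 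Since these characterizations are ``if and only if,'' closure under both direct sums and direct summands follows simultaneously.

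As the author indicates, there is no genuine obstacle; the entire argument is careful bookkeeping. The only step requiring attention is the verification that kernels and cokernels respect the grading, since it is the unique place where one uses that morphisms preserve the decomposition; after that, the abelian axioms and the closure statements for the subcategories reduce to mechanical unwinding of the definitions.
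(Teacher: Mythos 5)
Your argument is correct, and it fills in exactly the routine verification that the paper leaves implicit (the paper states the proposition as ``straightforward'' without proof): the key point that morphisms preserve the decomposition, so kernels, images and cokernels split along the $\mathrm{n}$/$\mathrm{s}$-grading, together with the componentwise ``if and only if'' characterizations of effectiveness and minimality on direct sums. Nothing is missing.
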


\begin{lemma}
Let $(M, M_{\mathrm{n}}, M_{\mathrm{s}})$ be a representation of 
a reductive triple $(\fg, \fm, \fh)$. 
If $(M, M_{\mathrm{n}}, M_{\mathrm{s}})$ is effective, 
then its subrepresentations are effective. 
If $(M, M_{\mathrm{n}}, M_{\mathrm{s}})$ is minimal, 
then its quotient representations are minimal. 
\end{lemma}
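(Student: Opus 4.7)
The plan is to chase definitions, leveraging the compatibility condition built into the notion of subrepresentation. For a subrepresentation $N\subseteq M$, the defining identity $N=(N\cap M_{\mathrm{n}})+(N\cap M_{\mathrm{s}})$ forces $N_{\mathrm{n}}=N\cap M_{\mathrm{n}}$ and $N_{\mathrm{s}}=N\cap M_{\mathrm{s}}$, so the inherited decomposition agrees with intersecting with the pieces of $M$. Dually, for the quotient one has $(M/N)_{\mathrm{n}}=M_{\mathrm{n}}/(N\cap M_{\mathrm{n}})$ and $(M/N)_{\mathrm{s}}=M_{\mathrm{s}}/(N\cap M_{\mathrm{s}})$. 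I would begin by recording these identifications so that the projection operators $(-)_{\mathrm{s}}$ for $N$ and $M/N$ are visibly compatible with the corresponding operators on $M$.

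For effectiveness, assume $M$ is effective and let $s\in N_{\mathrm{s}}=N\cap M_{\mathrm{s}}$ be an element whose image in $\mathrm{Hom}(\fm, N_{\mathrm{n}})$ is zero, i.e.\ $\fm\cdot s=0$ inside $N_{\mathrm{n}}$. Since $\fm\cdot s\subseteq \fm\cdot M_{\mathrm{s}}\subseteq M_{\mathrm{n}}$, this also says that $s$ goes to zero under $M_{\mathrm{s}}\to \mathrm{Hom}(\fm, M_{\mathrm{n}})$, so effectiveness of $M$ gives $s=0$. Phrased more symmetrically, the map $N_{\mathrm{s}}\to \mathrm{Hom}(\fm, N_{\mathrm{n}})$ factors as the restriction (and co-restriction) of the injective map $M_{\mathrm{s}}\to\mathrm{Hom}(\fm, M_{\mathrm{n}})$, hence is itself injective.

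For minimality, assume $M$ is minimal and let $\bar s\in (M/N)_{\mathrm{s}}=M_{\mathrm{s}}/(N\cap M_{\mathrm{s}})$ be represented by some $s\in M_{\mathrm{s}}$. By the minimality assumption, there exist $x_i\in\fm$ and $n_i\in M_{\mathrm{n}}$ such that $s$ equals the $M_{\mathrm{s}}$-component of $\sum_i x_i\cdot n_i$. Reducing modulo $N$ and writing $\bar n_i$ for the image of $n_i$ in $(M/N)_{\mathrm{n}}$, one gets that $\bar s$ is the $(M/N)_{\mathrm{s}}$-component of $\sum_i x_i\cdot \bar n_i$, which is exactly what minimality of $M/N$ demands. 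I do not anticipate any real obstacle: both assertions reduce to direct definition chasing, and the only point that calls for slight care is confirming the identifications of $N_{\mathrm{n}},N_{\mathrm{s}}$ and $(M/N)_{\mathrm{n}},(M/N)_{\mathrm{s}}$ so that the component-projections in $N$ and $M/N$ are induced from those in $M$.
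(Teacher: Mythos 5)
Your argument is correct and is essentially the paper's own proof: for effectiveness you observe that $N_{\mathrm{s}}\to\mathrm{Hom}(\fm,N_{\mathrm{n}})$ is a restriction/co-restriction of the injective map $M_{\mathrm{s}}\to\mathrm{Hom}(\fm,M_{\mathrm{n}})$, and for minimality you push the relation $(\fm M_{\mathrm{n}})_{\mathrm{s}}=M_{\mathrm{s}}$ through the projection $\pi\colon M\to M/N$, using that $\pi$ is compatible with the $\mathrm{s}$-component because $N=(N\cap M_{\mathrm{n}})+(N\cap M_{\mathrm{s}})$. The only difference is that you carry out the same computation element-wise where the paper phrases it in terms of maps; no gap.
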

\begin{proof}
If $(N, N_{\mathrm{n}}, N_{\mathrm{s}})$ is a subrepresentation, 
there are inclusion maps $N_{\mathrm{s}}\hookrightarrow M_{\mathrm{s}}$ 
and 
$\mathrm{Hom}(\fm, N_{\mathrm{n}})\hookrightarrow\mathrm{Hom}(\fm, M_{\mathrm{n}})$, 
which commute with the natural maps $N_{\mathrm{s}}\to \mathrm{Hom}(\fm, N_{\mathrm{n}})$ 
and $M_{\mathrm{s}}\to \mathrm{Hom}(\fm, M_{\mathrm{n}})$. 
Thus, if the latter is injective, so is the former. 

If $(N, N_{\mathrm{n}}, N_{\mathrm{s}})$ is a quotient representation 
with the projection $\pi: M\to N$, 
then 
\[
\pi((\fm M_{\mathrm{n}})_{\mathrm{s}})=(\pi(\fm M_{\mathrm{n}}))_{\mathrm{s}}=
(\fm\,\pi(M_{\mathrm{n}}))_{\mathrm{s}}
=(\fm N_{\mathrm{n}})_{\mathrm{s}}, 
\]
and $(\fm M_{\mathrm{n}})_{\mathrm{s}}=M_{\mathrm{s}}$ implies 
$(\fm N_{\mathrm{n}})_{\mathrm{s}}=\pi(M_{\mathrm{s}})=N_{\mathrm{s}}$. 
\end{proof}

\begin{example}
The kernel and cokernel of a homomorphism of effective minimal representations 
are not necessarily effective and minimal, 
and thus $\mathrm{Rep}^{\mathrm{em}}(\fg, \fm, \fh)$ 
is not necessarily an abelian subcategory of 
$\mathrm{Rep}(\fg, \fm, \fh)$. 
(Here, an abelian subcategory of an abelian category 
means a full subcategory which is abelian and 
such that the inclusion functor is exact.) 
Let us give an example where 
$(\fg, \fm, \fh)$ is also effective and minimal. 

Let $\fg=\fm=\mathbb{K}$ 
be an abelian Lie algebra and consider the reductive triple $(\fg, \fm, 0)$. 
Let $M_{\mathrm{n}}=\mathbb{K}^m$, $M_{\mathrm{s}}=\mathbb{K}^n$ and $M=M_{\mathrm{n}}\oplus M_{\mathrm{s}}$. 
A structure of a representation of $(\fg, \fm, 0)$ 
on $(M, M_{\mathrm{n}}, M_{\mathrm{s}})$ 
is given by making $1\in\mathbb{K}=\fg$ act on $M$ 
as 
$\begin{pmatrix}
A & B \\ C & O 
\end{pmatrix}$, 
where $A$, $B$ and $C$ are $m\times m$, $m\times n$ and $n\times m$ matrices. 
It is effective (resp. minimal) if and only if the rank of $B$ (resp. $C$) is $n$. 

Now let $m=2$, $n=1$, $A=O$, $B=\begin{pmatrix} 1 \\ 0 \end{pmatrix}$ 
and $C=\begin{pmatrix} 0 & 1 \end{pmatrix}$, 
so that
\[
\begin{pmatrix}
A & B \\ C & O 
\end{pmatrix}
= 
\begin{pmatrix}
0 & 0 & 1 \\ 0 & 0 & 0 \\ 0 & 1 & 0 
\end{pmatrix}. 
\]
Then $M$ is effective and minimal. 
The subspaces $M_1=(\mathbb{K}\oplus 0)\oplus 0$ 
and $M_2=(\mathbb{K}\oplus 0)\oplus\mathbb{K}$ 
are invariant under the multiplication by this matrix, 
i.e., they are subrepresentations of $M$. 
We see that $M_1$ is effective and minimal, since its $\mathrm{s}$-part is $0$, 
while $M/M_1\cong \mathbb{K}\oplus\mathbb{K}$ (as a vector space) 
is not effective, since the action of $1\in\fm$ is given by 
$\begin{pmatrix}
0 & 0 \\ 1 & 0 
\end{pmatrix}$. 
Similarly, 
$M/M_2\cong \mathbb{K}\oplus 0$ is effective and minimal 
while $M_2\cong \mathbb{K}\oplus\mathbb{K}$ is not minimal since it is given by the matrix 
$\begin{pmatrix}
0 & 1 \\ 0 & 0 
\end{pmatrix}$. 
\end{example}

An arbitrary representation is related to effective and minimal representations 
in the following way. 
\begin{lemma}
Let $(M, M_{\mathrm{n}}, M_{\mathrm{s}})$ be a representation of a reductive triple $(\fg, \fm, \fh)$. 

(1)
There is an effective representation $(M', M_{\mathrm{n}}, M'_{\mathrm{s}})$, 
an effective and minimal representation $(M'', M_{\mathrm{n}}, M''_{\mathrm{s}})$ 
and a surjective homomorphism $M\to M'$ and an injective homomorphism $M''\to M'$ 
which are identity maps on $M_{\mathrm{n}}$. 

(2)
There is a minimal representation $(M', M_{\mathrm{n}}, M'_{\mathrm{s}})$, 
an effective and minimal representation $(M'', M_{\mathrm{n}}, M''_{\mathrm{s}})$ 
and an injective homomorphism $M'\to M$ and a surjective homomorphism $M'\to M''$ 
which are identity maps on $M_{\mathrm{n}}$. 
\end{lemma}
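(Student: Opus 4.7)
The plan for both parts is to construct the desired representations by either quotienting out the ``defect from effectivity'' or cutting down to the ``minimal part containing $M_{\mathrm{n}}$,'' and to verify in each case that the subspace in question is a subrepresentation of $(M, M_{\mathrm{n}}, M_{\mathrm{s}})$ compatible with the decomposition.

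For (1), I would first form the effective quotient. Let $K=\{s\in M_{\mathrm{s}}\mid \fm\cdot s=0\}\subseteq M_{\mathrm{s}}$, which is the kernel of the natural map $M_{\mathrm{s}}\to\mathrm{Hom}(\fm, M_{\mathrm{n}})$. I claim $K$ is a $\fg$-subrepresentation of $M$: obviously $\fm\cdot K=0\subseteq K$, and for $h\in\fh$, $s\in K$, $x\in\fm$ we have $x\cdot(h\cdot s)=h\cdot(x\cdot s)+[x, h]\cdot s=0$ since $x\cdot s=0$ and $[x, h]\in\fm$ by $[\fh, \fm]\subseteq\fm$. So $K$ is stable under $\fg$ and lies in $M_{\mathrm{s}}$, hence compatible with the decomposition. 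Define $M'=M/K$, with $M'_{\mathrm{n}}=M_{\mathrm{n}}$ and $M'_{\mathrm{s}}=M_{\mathrm{s}}/K$. By construction $M'$ is effective, and the projection $M\to M'$ is the identity on $M_{\mathrm{n}}$. To produce $M''$, set $M''_{\mathrm{s}}=(\fm M_{\mathrm{n}})_{\mathrm{s}}\subseteq M'_{\mathrm{s}}$ and $M''=M_{\mathrm{n}}\oplus M''_{\mathrm{s}}$; I would check that this is a subrepresentation (the key points being $\fm M''_{\mathrm{s}}\subseteq \fm M_{\mathrm{s}}\subseteq M_{\mathrm{n}}$ and $\fh M''_{\mathrm{s}}\subseteq M''_{\mathrm{s}}$, the latter using $[\fh, \fm]\subseteq\fm$ so that $\fh\cdot(\fm M_{\mathrm{n}})\subseteq \fm M_{\mathrm{n}}$). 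By construction $(\fm M''_{\mathrm{n}})_{\mathrm{s}}=(\fm M_{\mathrm{n}})_{\mathrm{s}}=M''_{\mathrm{s}}$, so $M''$ is minimal, and effectivity is inherited from $M'$ by the previous lemma.

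Part (2) is dual in spirit. First build the minimal subrepresentation: set $M'_{\mathrm{s}}=(\fm M_{\mathrm{n}})_{\mathrm{s}}$ and $M'=M_{\mathrm{n}}\oplus M'_{\mathrm{s}}$ inside $M$; the same computation as above shows this is a subrepresentation, and it is minimal with $M'_{\mathrm{n}}=M_{\mathrm{n}}$. Then apply the construction of the first step of (1) to $M'$: its effectivity defect $K'=\{s\in M'_{\mathrm{s}}\mid \fm\cdot s=0\}$ is a $\fg$-subrepresentation in $M'_{\mathrm{s}}$, so $M''=M'/K'$ is effective, and minimality is preserved because the image of $\fm M_{\mathrm{n}}$ in $M''$ still has $\mathrm{s}$-part all of $M''_{\mathrm{s}}=M'_{\mathrm{s}}/K'$.

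The only real content is verifying the $\fg$-stability claims for $K$, $K'$ and $(\fm M_{\mathrm{n}})_{\mathrm{s}}$, and this is the step I expect to be the main (though routine) obstacle: it hinges crucially on the reductivity condition $[\fh, \fm]\subseteq\fm$, which controls how the $\fh$-action interacts with the subspaces defined via the $\fm$-action. The decomposition compatibility is then automatic since each constructed space is built from pieces of $M_{\mathrm{n}}$ and $M_{\mathrm{s}}$.
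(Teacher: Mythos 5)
Your proposal is correct and follows essentially the same route as the paper: quotient by the kernel $K$ of $M_{\mathrm{s}}\to\mathrm{Hom}(\fm,M_{\mathrm{n}})$ to gain effectivity, cut down to $M_{\mathrm{n}}\oplus(\fm M_{\mathrm{n}})_{\mathrm{s}}$ to gain minimality, and compose the two constructions in the two orders, with the same $[\fh,\fm]\subseteq\fm$ computations establishing $\fh$-stability. The observation that each operation preserves the other property (via the preceding lemma) matches the paper's argument as well.
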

\begin{proof}
We consider the following constructions. 

(a) 
Let $K:=\mathrm{Ker}(M_{\mathrm{s}}\to\mathrm{Hom}(\fm, M_{\mathrm{n}}))$. 
Then $\fm K=0$ and 
\[
\fm(\fh K)\subseteq
[\fm, \fh]K+\fh(\fm K)
\subseteq \fm K=0, 
\]
and hence $\fh K\subseteq K$. 
Thus $K$ is a subrepresentation of $M$, and 
if we take $M'_{\mathrm{s}}:=M_{\mathrm{s}}/K$ and $M'=M_{\mathrm{n}}\oplus M'_{\mathrm{s}}$, 
it is easy to see that $(M', M_{\mathrm{n}}, M'_{\mathrm{s}})$ is an effective representation. 
It is effective and minimal if $(M, M_{\mathrm{n}}, M_{\mathrm{s}})$ is minimal. 

(b)
Let $M'_{\mathrm{s}}:=(\fm M_{\mathrm{n}})_{\mathrm{s}}$ 
and $M':=M_{\mathrm{n}}\oplus M'_{\mathrm{s}}$. 
We have 
\[
\fh(\fm M_{\mathrm{n}})
\subseteq [\fh, \fm]M_{\mathrm{n}} + \fm(\fh M_{\mathrm{n}})
\subseteq\fm M_{\mathrm{n}}, 
\]
and therefore 
\[
\fh M'_{\mathrm{s}}=
\fh(\fm M_{\mathrm{n}})_{\mathrm{s}}
= (\fh(\fm M_{\mathrm{n}}))_{\mathrm{s}}
\subseteq(\fm M_{\mathrm{n}})_{\mathrm{s}}
=M'_{\mathrm{s}}, 
\]
while $\fm M'_{\mathrm{s}}\subseteq \fm M_{\mathrm{s}}\subseteq M_{\mathrm{n}}$. 
Thus $(M', M_{\mathrm{n}}, M'_{\mathrm{s}})$ is a subrepresentation, 
which is obviously minimal. 
It is effective and minimal if $(M, M_{\mathrm{n}}, M_{\mathrm{s}})$ is effective. 

By taking the construction (a) and then (b), we have (1). 
By taking (b) and then (a), we have (2). 
\end{proof}

Given a representation $(M, M_{\mathrm{n}}, M_{\mathrm{s}})$ 
of $(\fg, \fm, \fh)$, 
we can think of $M_{\mathrm{n}}$ as a representation of $\fm$, 
regarded as a Lie-Yamaguti algebra, in a natural way. 
To see this, let us recall the following correspondence of 
representations of a Lie-Yamaguti algebra $T$ 
and certain extensions of $T$. 
\begin{proposition}[{\cite[Proposition 4.5]{Takahashi2021}}]\label{prop_ly_rep_ext}
Let $T$ be a Lie-Yamaguti algebra. 

(1) 
Let $(V, \rho, \theta, \delta)$ be a representation of $T$. 
Then $T\oplus V$ is a Lie-Yamaguti algebra by the operations 
\begin{equation}\label{eq_rep_cor}
\begin{array}{rcl}
(x_1, v_1)*(x_2, v_2) & = & (x_1*x_2, \rho(x_1)v_2-\rho(x_2)v_1), \\
{[}(x_1, v_1), (x_2, v_2), (x_3, v_3)] & = & 
([x_1, x_2, x_3], \\
& & \ \theta(x_2, x_3)v_1-\theta(x_1, x_3)v_2+\delta(x_1, x_2)v_3), 
\end{array}
\end{equation}
and $T\oplus 0$ is a subalgebra and $0\oplus V$ is an abelian ideal. 

(2) 
Conversely, if $T\oplus V$ is given a structure of a Lie-Yamaguti algebra 
such that $T\oplus 0$ is a subalgebra with the same operations as $T$ 
and $0\oplus V$ is an abelian ideal, 
then the Lie-Yamaguti algebra structure is given in the form of (\ref{eq_rep_cor})
and $(V, \rho, \theta, \delta)$ is a representation of $T$. 
\end{proposition}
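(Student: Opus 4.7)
The plan is to verify both directions by a direct component-wise computation, treating the axioms (LY1)--(LY6) on $T \oplus V$ as pairs of equations, one in $T$ and one in $V$, and matching the $V$-parts to the representation axioms (RLY1)--(RLY6).

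For part (1), after substituting the proposed operations into each axiom (LYi), the $T$-component of the resulting identity reduces to (LYi) for $T$ itself and is thus automatic. The $V$-component is multilinear in the arguments $v_j$, so one can collect the coefficient of each $v_j$ individually, obtaining relations among $\rho$, $\theta$, and $\delta$. For instance, (LY1) is immediate; (LY2) applied to $((x,v),(x,v),(y,w))$ leaves only $\delta(x,x)w$, which vanishes by (RLY1) with $y=x$; the coefficient of $v_3$ in the $V$-component of (LY3) is precisely (RLY1); (LY5) splits into (RLY2) and (RLY5) according as the variable lies inside or outside the triple bracket; and analogously (LY4) produces (RLY2)--(RLY4) while (LY6) produces (RLY4)--(RLY6). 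The matching in the remaining cases is of the same character, so no genuine identities beyond (RLY1)--(RLY6) are needed.

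For part (2), I first show that the hypotheses force the operations to take the form (\ref{eq_rep_cor}). Since $T \oplus 0$ is a subalgebra carrying the original $T$-operations, the $T$-components of $*$ and $[\cdot,\cdot,\cdot]$ on $T \oplus V$ depend only on the $T$-parts of the arguments. Since $0 \oplus V$ is an abelian ideal, any product having two or more $V$-arguments vanishes entirely, and any product with exactly one $V$-argument lies in $V$. Thus $*$ on $T \oplus V$ is determined by a pair of bilinear maps $T \times V \to V$ and $V \times T \to V$, and the triple product by three trilinear maps each with a single $V$-slot. Applying (LY1) to $(x,v)*(x,v)=0$ shows these two bilinear maps are negatives of each other, defining a single $\rho$; applying (LY2) to $[(x,v),(x,v),(y,w)]=0$ forces the trilinear maps with $V$-argument in slots $1$ and $2$ to be negatives of each other, defining $\theta$, and leaves an independent $\delta$ in slot $3$ satisfying $\delta(x,x)=0$. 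Once the operations take the stated form, the axiom-by-axiom dictionary established in part (1) translates (LY1)--(LY6) on $T \oplus V$ directly into (RLY1)--(RLY6) for $(V,\rho,\theta,\delta)$. The main obstacle is not any conceptual difficulty but the bookkeeping: one must carefully pair each Lie-Yamaguti axiom on the extension with the correct combination of representation axioms, tracking signs and cyclic permutations of variables; since this statement is cited from \cite[Proposition 4.5]{Takahashi2021}, in practice I would present the correspondence table and verify a few illustrative cases, leaving the rest to routine multilinear algebra.
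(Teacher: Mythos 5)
Your overall strategy is the right one, and it matches how this statement is actually established: the paper itself does not reprove the proposition but cites \cite[Proposition 4.5]{Takahashi2021}, and the proof there is the same direct componentwise verification you outline. Your reduction in part (2) of the shape of the operations from the subalgebra/abelian-ideal hypotheses is also correct: bilinearity plus the fact that products with two or more $V$-arguments vanish, together with (LY1) and (LY2) applied to elements $(x,v)$, does merge the two bilinear maps into a single $\rho$, the slot-$1$/slot-$2$ trilinear maps into a single $\theta$, and leaves a $\delta$ in slot $3$.

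The weak point is your ``dictionary,'' whose inaccuracy hides the only non-immediate steps. The $V$-component of (LY4) gives (RLY3) in the coefficients of $v_1,v_2,v_3$ (not (RLY2) or (RLY4)), and its coefficient of $v_4$ is the identity $\delta(x_1*x_2,x_3)+\delta(x_2*x_3,x_1)+\delta(x_3*x_1,x_2)=0$, which is not among (RLY1)--(RLY6). Likewise (LY6) yields (RLY4) (coefficients of $v_1,v_2$) and (RLY6) (coefficients of $v_3,v_4$), but not (RLY5), and its coefficient of $v_5$ is $[\delta(x_1,x_2),\delta(x_3,x_4)]=\delta([x_1,x_2,x_3],x_4)+\delta(x_3,[x_1,x_2,x_4])$, again not an axiom. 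So your claim that ``no genuine identities beyond (RLY1)--(RLY6) are needed'' is only true after these two pure-$\delta$ identities are derived, e.g.\ by substituting $\delta(x,y)=[\rho(x),\rho(y)]-\rho(x*y)-\theta(x,y)+\theta(y,x)$ from (RLY1) and using (RLY2), (RLY3), (RLY5), (RLY6), the Jacobi identity in $\mathrm{End}(V)$, and (LY3), (LY5) for $T$; this derivation is precisely the part of part (1) that your sketch omits, and without it the forward direction is incomplete. Part (2) is unaffected by this, since every axiom (RLY1)--(RLY6) does occur verbatim as a coefficient: (RLY1) from (LY3), (RLY2) and (RLY5) from (LY5), (RLY3) from (LY4), (RLY4) and (RLY6) from (LY6); so your converse direction goes through once the form of the operations is established.
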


\begin{proposition}\label{prop_rrt_to_rly}
Let $(\fg, \fm, \fh)$ be a reductive triple. 

(1)
Given a representation $(M, M_{\mathrm{n}}, M_{\mathrm{s}})$ of $(\fg, \fm, \fh)$, 
the triple 
\[
(\tilde{\fg}, \tilde{\fm}, \tilde{\fh})=
(\fg\oplus M, \fm\oplus M_{\mathrm{n}}, \fh\oplus M_{\mathrm{s}})
\]
is a reductive triple. 
The associated Lie-Yamaguti algebra structure on $\fm\oplus M_{\mathrm{n}}$ is given by 
\begin{equation}\label{eq_ly_on_ext}
\begin{array}{rcl}
(x_1, m_1)*(x_2, m_2) & = & (x_1*x_2, (x_1m_2-x_2m_1)_{\mathrm{n}}), \\
{[}(x_1, m_1), (x_2, m_2), (x_3, m_3)] & = & ([x_1, x_2, x_3], \\
 & & \ [x_1, x_2]_\fh m_3 -x_3(x_1m_2-x_2m_1)_{\mathrm{s}}), 
\end{array}
\end{equation}
where we write $m_{\mathrm{n}}$ and $m_{\mathrm{s}}$ 
for the $M_{\mathrm{n}}$- and $M_{\mathrm{s}}$-components of $m\in M$. 

(2)
The projection $\tilde{\fg}\to \fg$ 
and the zero section $\fg\to \tilde{\fg}$ 
are homomorphisms of reductive triples, 
and hence the induced maps $\tilde{\fm}\to \fm$ 
and $\fm\to \tilde{\fm}$
are homomorphisms of Lie-Yamaguti algebras. 

(3)
The kernel $0\oplus M$ of $\tilde{\fg}\to \fg$ 
is an abelian ideal of $\tilde{\fg}$, 
and the kernel $0\oplus M_{\mathrm{n}}$ of $\tilde{\fm}\to \fm$ 
is an abelian ideal of $\tilde{\fm}$. 

Hence, by the correspondence of the previous proposition, 
$M_{\mathrm{n}}$ can be considered as a representation of $\fm$. 
Concretely, 
\begin{equation}\label{eq_rrt_to_rly}
\begin{array}{rcl}
\rho(x)(m) & = & (xm)_{\mathrm{n}}, \\
\theta(x, y)(m) & = & y\cdot (xm)_{\mathrm{s}}, \\
\delta(x, y)(m) & = & [x, y]_{\fh}m. 
\end{array}
\end{equation}
\end{proposition}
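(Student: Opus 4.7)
The plan is to equip $\tilde{\fg}=\fg\oplus M$ with the semidirect product Lie bracket for the given $\fg$-action on $M$, verify the reductive condition, read off the induced Lie-Yamaguti operations from Proposition~\ref{prop_ly_from_rt}, and finally identify $M_{\mathrm{n}}$ as a representation of $\fm$ via Proposition~\ref{prop_ly_rep_ext}.

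For (1), I would define $[(x_1, m_1), (x_2, m_2)] := ([x_1, x_2], x_1 m_2 - x_2 m_1)$ on $\tilde{\fg}$; this is a Lie bracket because $M$ is a $\fg$-module. The splitting $\tilde{\fg} = \tilde{\fm}\oplus \tilde{\fh}$ is immediate from $\fg = \fm \oplus \fh$ and $M = M_{\mathrm{n}} \oplus M_{\mathrm{s}}$. To check $[\tilde{\fh}, \tilde{\fm}]\subseteq \tilde{\fm}$ I would expand bilinearly into four sub-brackets: $[\fh, \fm]\subseteq \fm$ from the original triple, $[\fh, M_{\mathrm{n}}] = \fh M_{\mathrm{n}}\subseteq M_{\mathrm{n}}$ and $[M_{\mathrm{s}}, \fm] = -\fm M_{\mathrm{s}}\subseteq M_{\mathrm{n}}$ from the representation hypotheses, and $[M_{\mathrm{s}}, M_{\mathrm{n}}]=0$ because $M$ is abelian; each lands in $\tilde{\fm}$. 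The induced Lie-Yamaguti operations are then obtained by extracting the $\tilde{\fm}$- and $\tilde{\fh}$-components of the single and double brackets; since the $\tilde{\fh}$-part of $[(x_1, m_1), (x_2, m_2)]$ is $([x_1, x_2]_{\fh},\, (x_1 m_2 - x_2 m_1)_{\mathrm{s}})$, bracketing once more with $(x_3, m_3)$ and using $[\fh, M_{\mathrm{s}}]\subseteq M_{\mathrm{s}}$ to keep components straight yields exactly the displayed formulas.

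Part (2) is essentially tautological: the projection $\tilde{\fg}\to \fg$ kills the ideal $M$ and is a Lie algebra homomorphism carrying $\tilde{\fm}$ to $\fm$ and $\tilde{\fh}$ to $\fh$; the zero section is a Lie algebra splitting compatible with both summands. Functoriality of $\mathcal{LY}$ from Proposition~\ref{prop_ly_from_rt} then produces the corresponding Lie-Yamaguti homomorphisms between $\tilde{\fm}$ and $\fm$.

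For (3), the subspace $0\oplus M\subseteq \tilde{\fg}$ is a Lie ideal (because $[\tilde{\fg}, M]\subseteq M$) and abelian (because $[M, M]=0$). To verify that $0\oplus M_{\mathrm{n}}$ is an abelian ideal of $\tilde{\fm}$, I would substitute elements of the form $(0, m)$ and $(x, m')$ into the formulas from (1); the products $(0, m)*\tilde{\fm}$ and triple products $[(0, m), \tilde{\fm}, \tilde{\fm}]$ land in $0\oplus M_{\mathrm{n}}$ by a direct inspection, while $(0,M_{\mathrm{n}})*(0,M_{\mathrm{n}})=0$ and $[\tilde{\fm}, 0\oplus M_{\mathrm{n}}, 0\oplus M_{\mathrm{n}}]=0$ collapse because two elements of $M$ commute in $\tilde{\fg}$. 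Proposition~\ref{prop_ly_rep_ext} then converts this abelian-ideal datum into a representation of $\fm$ on $M_{\mathrm{n}}$; comparing (\ref{eq_rep_cor}) with (\ref{eq_ly_on_ext}) restricted to the relevant arguments reads off $\rho$, $\theta$, $\delta$ as in (\ref{eq_rrt_to_rly}). The whole argument is computational; the only delicate bookkeeping is tracking $\mathrm{n}$- and $\mathrm{s}$-components when extracting $\tilde{\fh}$-parts of mixed brackets, which is precisely why the hypotheses $\fh M_{\mathrm{n}}\subseteq M_{\mathrm{n}}$, $\fh M_{\mathrm{s}}\subseteq M_{\mathrm{s}}$ and $\fm M_{\mathrm{s}}\subseteq M_{\mathrm{n}}$ were imposed in the definition of a representation of $(\fg, \fm, \fh)$.
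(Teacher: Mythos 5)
Your proposal is correct and follows essentially the same route as the paper: the semidirect product bracket on $\fg\oplus M$, componentwise verification of the reductive conditions, extraction of the $\tilde{\fm}$- and $\tilde{\fh}$-parts to get (\ref{eq_ly_on_ext}), and Proposition \ref{prop_ly_rep_ext} together with a comparison of (\ref{eq_rep_cor}) and (\ref{eq_ly_on_ext}) to obtain (\ref{eq_rrt_to_rly}). The only cosmetic differences are that the paper gets the ideal property of $0\oplus M_{\mathrm{n}}$ for free as the kernel of $\tilde{\fm}\to\fm$ where you compute directly, and the hypothesis actually used in the double-bracket bookkeeping is $\fh M_{\mathrm{n}}\subseteq M_{\mathrm{n}}$ and $\fm M_{\mathrm{s}}\subseteq M_{\mathrm{n}}$ rather than $\fh M_{\mathrm{s}}\subseteq M_{\mathrm{s}}$, which instead serves to make $\tilde{\fh}$ a subalgebra.
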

\begin{proof}
(1) 
As is well known, 
$\tilde{\fg}=\fg\oplus M$ is a Lie algebra with the bracket 
defined by $[(x_1, m_1), (x_2, m_2)]=([x_1, x_2], x_1m_2-x_2m_1)$. 
The conditions $[\tilde{\fh}, \tilde{\fh}]\subseteq \tilde{\fh}$ 
and $[\tilde{\fh}, \tilde{\fm}]\subseteq \tilde{\fm}$ 
are straightforward from the definition of a representation of a reductive triple. 

Taking the $\fm$-component of $[x_1, x_2]$
and the $M_{\mathrm{n}}$-component of $x_1m_2-x_2m_1$, 
we have the first equality of (\ref{eq_ly_on_ext}). 
The $\tilde{\fh}$-component of $[(x_1, m_1), (x_2, m_2)]$ is 
$([x_1, x_2]_\fh, (x_1m_2-x_2m_1)_{\mathrm{s}})$, 
and we have the second equality of (\ref{eq_ly_on_ext}) 
by taking the Lie bracket with $(x_3, m_3)$. 

\smallbreak
(2)
The map $\tilde{\fg}\to \fg$ is clearly a homomorphism of Lie algebras, 
and maps $\tilde{\fm}$ and $\tilde{\fh}$ to 
$\fm$ and $\fh$, respectively. 
A straightforward calculation shows that 
the zero section $\fg\to \tilde{\fg}$ 
is a homomorphism of Lie algebras 
and that it maps $\fm$ and $\fh$ 
into $\tilde{\fm}$ and $\tilde{\fh}$, respectively. 

By Proposition \ref{prop_ly_from_rt}, 
the induced maps $\tilde{\fm}\to\fm$ 
and $\fm\to\tilde{\fm}$ are homomorphisms 
of Lie-Yamaguti algebras. 

\smallbreak
(3)
It is again well known (and easy to check) that $0\oplus M$ is an abelian ideal of $\tilde{\fg}$. 
Since $0\oplus M_{\mathrm{n}}$ is the kernel of the homomorphism $\tilde{\fm}\to \fm$, 
it is an ideal. 
If $m, m'\in M_{\mathrm{n}}$, then in $\tilde{\fg}=\fg\oplus M$, 
\[
[(0, m), (0, m')]=(0, 0), 
\]
hence $(0, m) * (0, m') = 0$. 
For $(x, m)\in \tilde{\fm}$ and $m', m''\in M_{\mathrm{n}}$, we have
\[
[(x, m), (0, m')]=(0, xm'), 
\]
and therefore 
\[
[(x, m), (0, m'), (0, m'')] = [(0, (xm')_{\mathrm{s}}), (0, m'')] = 0. 
\]
By Proposition \ref{prop_ly_rep_ext}, 
$M_{\mathrm{n}}$ is a representation of $\fm$. 
We obtain (\ref{eq_rrt_to_rly})
by comparing (\ref{eq_rep_cor}) and (\ref{eq_ly_on_ext}). 
\end{proof}

\begin{proposition}\label{prop_fn_rly}
The assignment $(M, M_{\mathrm{n}}, M_{\mathrm{s}})\mapsto M_{\mathrm{n}}$ defines a functor 
\[
\mathcal{RLY}_{(\fg, \fm, \fh)}: \mathrm{Rep}(\fg, \fm, \fh)
\to \mathrm{Rep}(\fm).  
\]
\end{proposition}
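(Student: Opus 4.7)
The plan is to leverage the construction of Proposition \ref{prop_rrt_to_rly}, which already equips $M_{\mathrm{n}}$ with a representation structure over $\fm$ via the formulas (\ref{eq_rrt_to_rly}) for each object $(M, M_{\mathrm{n}}, M_{\mathrm{s}})$. What remains is to handle morphisms and check functoriality. Given a homomorphism $f : (M, M_{\mathrm{n}}, M_{\mathrm{s}}) \to (M', M'_{\mathrm{n}}, M'_{\mathrm{s}})$ of representations of $(\fg, \fm, \fh)$, I would define $\mathcal{RLY}_{(\fg, \fm, \fh)}(f) := f|_{M_{\mathrm{n}}}$, which lands in $M'_{\mathrm{n}}$ by the defining property $f(M_{\mathrm{n}}) \subseteq M'_{\mathrm{n}}$.

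The key step is verifying that $f|_{M_{\mathrm{n}}}$ is compatible with the representation data $(\rho, \theta, \delta)$ on both sides. The crucial observation is that since $f$ respects the decomposition (mapping $M_{\mathrm{n}}$ into $M'_{\mathrm{n}}$ and $M_{\mathrm{s}}$ into $M'_{\mathrm{s}}$), the projections commute with $f$, i.e.\ $f(m_{\mathrm{n}}) = (f(m))_{\mathrm{n}}$ and $f(m_{\mathrm{s}}) = (f(m))_{\mathrm{s}}$ for every $m \in M$. Combined with $\fg$-linearity of $f$, this gives for $x, y \in \fm$ and $m \in M_{\mathrm{n}}$:
\begin{align*}
f(\rho(x) m) &= f((xm)_{\mathrm{n}}) = (xf(m))_{\mathrm{n}} = \rho'(x) f(m), \\
f(\theta(x, y) m) &= f(y \cdot (xm)_{\mathrm{s}}) = y \cdot (xf(m))_{\mathrm{s}} = \theta'(x, y) f(m), \\
f(\delta(x, y) m) &= f([x, y]_{\fh} m) = [x, y]_{\fh} f(m) = \delta'(x, y) f(m),
\end{align*}
where the middle step in the $\theta$ computation uses $\fg$-linearity applied twice (first to $x$, then to $y$).

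Preservation of composition and identities is immediate since restriction of linear maps is functorial. I do not expect any real obstacle here; the statement is essentially a bookkeeping consequence of Proposition \ref{prop_rrt_to_rly} together with the definition of a homomorphism of representations of $(\fg, \fm, \fh)$. If any subtlety arises, it would be in ensuring that the verifications above do not require passing through the $M_{\mathrm{s}}$-component in a way that depends on more than the compatibility already assumed — but inspection of the three formulas (\ref{eq_rrt_to_rly}) shows only the $\fg$-action and the two projections appear, both of which are respected by $f$.
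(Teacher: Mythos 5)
Your proposal is correct and follows the same route as the paper: the object-level structure comes from Proposition \ref{prop_rrt_to_rly}, and the morphism-level check is exactly the verification via the formulas (\ref{eq_rrt_to_rly}) that the paper leaves as "straightforward," using $\fg$-equivariance of $f$ together with the fact that $f(M_{\mathrm{n}})\subseteq M'_{\mathrm{n}}$ and $f(M_{\mathrm{s}})\subseteq M'_{\mathrm{s}}$ force $f$ to commute with the projections. Your explicit computations for $\rho$, $\theta$ and $\delta$ are all valid, so nothing is missing.
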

\begin{proof}
If $f: (M, M_{\mathrm{n}}, M_{\mathrm{s}})\to (M', M'_{\mathrm{n}}, M'_{\mathrm{s}})$ is a homomorphism of 
representations of $(\fg, \fm, \fh)$, 
then by definition, 
$f: M\to M'$ is a homomorphism of representations of $\fg$ 
that commutes with the projections to the $\mathrm{n}$-factor and the $\mathrm{s}$-factor. 

Using (\ref{eq_rrt_to_rly}), it is straightforward to check that 
$f|_{M_{\mathrm{n}}}: M_{\mathrm{n}}\to M'_{\mathrm{n}}$ 
is a homomorphism of representations of $\fm$. 
\end{proof}

For local regular $s$-pairs and infinitesimal $s$-manifolds, we proceed in a similar way. 

\begin{definition}
Let $(\fg, \varphi)$ be a local regular $s$-pair. 
We define a \emph{representation} of  $(\fg, \varphi)$ to be 
a quadruple $(M, M_{\mathrm{n}}, M_{\mathrm{s}}, \tilde{\psi})$ 
where $(M, M_{\mathrm{n}}, M_{\mathrm{s}})$ is a representation of 
the reductive triple $(\fg, \fm, \fh)$ associated to $(\fg, \varphi)$, 
and $\tilde{\psi}: M\to M$ is an invertible linear map 
satisfying $\tilde{\psi}(M_{\mathrm{n}})\subseteq M_{\mathrm{n}}$, 
$\tilde{\psi}|_{M_{\mathrm{s}}} = \mathrm{id}_{M_{\mathrm{s}}}$ 
and $\varphi(x)\tilde{\psi}(m)=\tilde{\psi}(xm)$ 
for any $x\in\fg$ and $m\in M$. 

A representation $(M, M_{\mathrm{n}}, M_{\mathrm{s}}, \tilde{\psi})$ is called 
\emph{effective} or \emph{minimal} if $(M, M_{\mathrm{n}}, M_{\mathrm{s}})$ 
is effective or minimal, respectively, as a representation of $(\fg, \fm, \fh)$. 

A homomorphism $f: (M, M_{\mathrm{n}}, M_{\mathrm{s}}, \tilde{\psi})\to(M', M'_{\mathrm{n}}, M'_{\mathrm{s}}, \tilde{\psi}')$ 
of representations of $(\fg, \varphi)$ 
is a homomorphism of representations of $(\fg, \fm, \fh)$ 
satisfying $f\circ \tilde{\psi}=\tilde{\psi}'\circ f$. 

We write the categories of representations (resp. effective representations, 
minimal representations, effective and minimal representations) 
as $\mathrm{Rep}(\fg, \varphi)$ 
(resp. $\mathrm{Rep}^{\mathrm{e}}(\fg, \varphi)$, 
$\mathrm{Rep}^{\mathrm{m}}(\fg, \varphi)$, 
$\mathrm{Rep}^{\mathrm{em}}(\fg, \varphi)$). 
\end{definition}

\begin{proposition}
Let $(\fg, \varphi)$ be a local regular $s$-pair with the associated triple $(\fg, \fm, \fh)$. 
The assignment 
$(M, M_{\mathrm{n}}, M_{\mathrm{s}}, \tilde{\psi})\mapsto (M_{\mathrm{n}}, \tilde{\psi}|_{M_{\mathrm{n}}})$ 
defines a functor 
\[
\mathcal{RISM}_{(\fg, \varphi)}: \mathrm{Rep}(\fg, \varphi)
\to \mathrm{Rep}(\fm, \varphi|_{\fm}).  
\]
\end{proposition}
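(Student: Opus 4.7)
The plan is to build on Proposition~\ref{prop_fn_rly}, which already gives that $M_{\mathrm{n}}$ carries a structure of representation of $\fm$ as a Lie-Yamaguti algebra via the formulas~(\ref{eq_rrt_to_rly}). What remains is to verify that $\psi := \tilde{\psi}|_{M_{\mathrm{n}}}$ is a linear automorphism of $M_{\mathrm{n}}$ satisfying (RISM1)--(RISM3) with $\sigma = \varphi|_{\fm}$, and that a morphism of representations of $(\fg, \varphi)$ restricts to a morphism of representations of $(\fm, \sigma)$.

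First I would note that $\tilde\psi$ preserves the direct sum decomposition $M = M_{\mathrm{n}} \oplus M_{\mathrm{s}}$: it fixes $M_{\mathrm{s}}$ pointwise and sends $M_{\mathrm{n}}$ into $M_{\mathrm{n}}$ by hypothesis, and since it is bijective on $M$ its restriction to $M_{\mathrm{n}}$ is in particular invertible. This also shows that $\tilde\psi$ commutes with the projections to $M_{\mathrm{n}}$ and $M_{\mathrm{s}}$. To verify the three axioms I would then exploit three facts systematically: (a) $\tilde\psi$ commutes with the two projections; (b) $\tilde\psi|_{M_{\mathrm{s}}} = \mathrm{id}$; and (c) $\varphi|_{\fh} = \mathrm{id}$, which is built into the definition of a local regular $s$-pair since $\fh = \fg^{\varphi}$.

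Next I would substitute these into each axiom. For (RISM1), compare $\rho(\sigma(x))(n) = (\varphi(x)\cdot n)_{\mathrm{n}}$ with $\psi\rho(x)\psi^{-1}(n) = \tilde\psi((x\cdot\psi^{-1}n)_{\mathrm{n}})$ via the intertwining property $\tilde\psi(x\cdot m) = \varphi(x)\cdot\tilde\psi(m)$ together with (a). For (RISM2), the same intertwining moves $\tilde\psi$ through $x\cdot(-)$, after which (a) and (b) show that $\tilde\psi$ acts trivially on the image of $(-)_{\mathrm{s}}$, reducing each of the two identities to a single common expression. For (RISM3), the point is that $[x,y]_{\fh}\in\fh$ is fixed by $\varphi$ by (c), so $\varphi([x,y]_{\fh})=[x,y]_{\fh}$; combined with the intertwining property this gives the identity at once. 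Functoriality is then immediate: a morphism $f$ of representations of $(\fg, \varphi)$ is in particular a morphism of representations of $(\fg, \fm, \fh)$, hence by Proposition~\ref{prop_fn_rly} its restriction to $M_{\mathrm{n}}$ respects $\rho$, $\theta$ and $\delta$; and restricting $f\circ\tilde\psi = \tilde\psi'\circ f$ to $M_{\mathrm{n}}$ yields $f|_{M_{\mathrm{n}}}\circ\psi = \psi'\circ f|_{M_{\mathrm{n}}}$.

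I do not expect any real obstacle; the argument is essentially a bookkeeping exercise that combines Proposition~\ref{prop_fn_rly} with the three defining properties of $\tilde\psi$ and the defining property of $\varphi$ on $\fh$. The only step that deserves a moment's thought is the observation that $\tilde\psi$ commutes with the projections, which follows immediately from its preservation of each direct summand; once that is in place, each verification is a one-line manipulation.
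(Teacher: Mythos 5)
Your proposal is correct and follows essentially the same route as the paper: the Lie–Yamaguti representation structure on $M_{\mathrm{n}}$ comes from Proposition \ref{prop_fn_rly}, and (RISM1)--(RISM3) are checked using the intertwining relation $\varphi(x)\tilde{\psi}(m)=\tilde{\psi}(xm)$, the facts that $\tilde{\psi}$ preserves the decomposition (hence commutes with the projections) and is the identity on $M_{\mathrm{s}}$, and that $\varphi$ fixes $\fh$, with functoriality obtained by restricting $f\circ\tilde{\psi}=\tilde{\psi}'\circ f$ to $M_{\mathrm{n}}$. The only cosmetic difference is that you verify (RISM1) in the form $\rho(\sigma(x))=\psi\rho(x)\psi^{-1}$ while the paper checks the equivalent identity $\rho(\sigma(x))\circ\psi=\psi\circ\rho(x)$.
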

\begin{proof}
Let 
$(M, M_{\mathrm{n}}, M_{\mathrm{s}}, \tilde{\psi})$ 
be a representation of $(\fg, \varphi)$. 
We have already shown that $M_{\mathrm{n}}$ has a natural structure 
of a representation of $(\fg, \fm, \fh)$. 

Let us write $\sigma=\varphi|_\fm$, $\psi=\tilde{\psi}|_{M_{\mathrm{n}}}$ 
and check the conditions (RISM1--3). 

For $x\in \fm$ and $m\in M_{\mathrm{n}}$, 
using (\ref{eq_rrt_to_rly})
we have 
\begin{align*}
\rho(\sigma(x))\circ\psi(m)
& =(\sigma(x)(\psi(m)))_{\mathrm{n}} = (\varphi(x)(\tilde{\psi}(m)))_{\mathrm{n}} \\
&= (\tilde{\psi}(xm))_{\mathrm{n}} = \tilde{\psi}((xm)_{\mathrm{n}})
= \psi(\rho(x)m), 
\end{align*}
so (RISM1) holds. 
For $x, y\in \fm$ and $m\in M_{\mathrm{n}}$, 
\begin{align*}
\theta(x, \sigma(y))(m)
& = \sigma(y)\cdot(xm)_{\mathrm{s}} = \varphi(y)\tilde{\psi}((xm)_{\mathrm{s}}) \\
& = \tilde{\psi}(y\cdot(xm)_{\mathrm{s}}) = \psi(\theta(x, y)m), 
\end{align*}
and 
\begin{align*}
\theta(\sigma(x), y)\psi(m)
& = y\cdot(\sigma(x)\psi(m))_{\mathrm{s}} = y\cdot (\varphi(x)\tilde{\psi}(m))_{\mathrm{s}} = y\cdot (\tilde{\psi}(xm))_{\mathrm{s}} \\
& = y\cdot \tilde{\psi}((xm)_{\mathrm{s}}) = y\cdot (xm)_{\mathrm{s}} = \theta(x, y)m, 
\end{align*}
so (RISM2) holds. 
Finally, we have 
\begin{equation*}
\delta(x, y)\psi(m)
= [x, y]_\fh \psi(m) = \varphi([x, y]_\fh)\tilde{\psi}(m) 
= \tilde{\psi}([x, y]_\fh m) = \psi(\delta(x, y)m), 
\end{equation*}
and (RISM3) holds. 

Let 
$f: (M, M_{\mathrm{n}}, M_{\mathrm{s}}, \tilde{\psi})\to(M', M'_{\mathrm{n}}, M'_{\mathrm{s}}, \tilde{\psi}')$ 
be a homomorphism of representations of $(\fg, \varphi)$. 
In Proposition \ref{prop_fn_rly}, 
we saw that $f|_{M_{\mathrm{n}}}$ is a homomorphism of 
representations of $(\fg, \fm, \fh)$. 
From $f\circ \tilde{\psi}=\tilde{\psi}'\circ f$ 
it follows that 
$f|_{M_{\mathrm{n}}} \circ \tilde{\psi}|_{M_{\mathrm{n}}}=
\tilde{\psi}'|_{M'_{\mathrm{n}}}\circ f|_{M_{\mathrm{n}}}$, 
so $f|_{M_{\mathrm{n}}}$ is a homomorphism of 
representations of $(\fm, \sigma)$. 
\end{proof}

\section{Tight representations}

In this section, we consider the following situation: 
$\tilde{T}$ and $T$ are Lie-Yamaguti algebras, 
$\pi: \tilde{T}\to T$ is a surjective homomorphism of Lie-Yamaguti algebras 
and $\iota: T\to \tilde{T}$ is a section of $\pi$, i.e. 
a homomorphism of Lie-Yamaguti algebras such that $\pi\circ\iota=\mathrm{id}_T$. 

A \emph{surjective} homomorphism of Lie-Yamaguti algebras 
induces a (surjective) homomorphism of the standard enveloping Lie algebras. 

\begin{lemma}[{See e.g.\ \cite[Proposition 2.30]{Takahashi2021}}]
Let $T$ and $\tilde{T}$ be Lie-Yamaguti algebras 
and $\pi: \tilde{T}\to T$ a surjective homomorphism of Lie-Yamaguti algebras. 

Then there is a unique Lie algebra homomorphism $L(\pi): L(\tilde{T})\to L(T)$ 
such that $L(\pi)|_{\tilde{T}}=(\pi, 0)$, 
and it is a surjective homomorphism of reductive triples. 

For $\tilde{x}, \tilde{y}\in \tilde{T}$, 
$L(\pi)(D_{\tilde{x}, \tilde{y}})=D_{\pi(\tilde{x}), \pi(\tilde{y})}$ holds. 
If $\tilde{u}\in\ider(\tilde{T})$ and 
$u=L(\pi)(\tilde{u})$, then $u\circ\pi=\pi\circ\tilde{u}$ holds. 
\end{lemma}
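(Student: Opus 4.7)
The plan is to define $L(\pi)$ on the two summands of $L(\tilde{T}) = \tilde{T}\oplus \ider(\tilde{T})$ separately: the restriction to $\tilde{T}$ is forced to be $\pi$, and on $\ider(\tilde{T})$ I will send $D_{\tilde{x}, \tilde{y}}$ to $D_{\pi(\tilde{x}), \pi(\tilde{y})}$, extending linearly. The first thing to establish, and the central obstacle, is that this prescription is well-defined on $\ider(\tilde{T})$: if $\sum_i c_i D_{\tilde{x}_i, \tilde{y}_i} = 0$ as an endomorphism of $\tilde{T}$, I need $\sum_i c_i D_{\pi(\tilde{x}_i), \pi(\tilde{y}_i)} = 0$ as an endomorphism of $T$. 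This is where the surjectivity of $\pi$ enters crucially: for each $z \in T$ choose $\tilde{z}$ with $\pi(\tilde{z}) = z$, then
\[
\sum_i c_i D_{\pi(\tilde{x}_i), \pi(\tilde{y}_i)}(z) = \sum_i c_i [\pi(\tilde{x}_i), \pi(\tilde{y}_i), \pi(\tilde{z})] = \pi\Bigl(\sum_i c_i [\tilde{x}_i, \tilde{y}_i, \tilde{z}]\Bigr) = \pi(0) = 0,
\]
using that $\pi$ respects the triple product.

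The same calculation, applied without the summation being zero, gives the \emph{intertwining relation} $\pi \circ \tilde{u} = L(\pi)(\tilde{u}) \circ \pi$ for every $\tilde{u} = D_{\tilde{x}, \tilde{y}}$ and hence, by linearity, for every $\tilde{u} \in \ider(\tilde{T})$. This relation is the workhorse for the rest: I would next verify that $L(\pi)$ is a Lie algebra homomorphism by checking the three types of brackets dictated by formula (\ref{eq_lt}). The $\tilde{T} \times \tilde{T}$ case reduces directly to $\pi$ being a Lie-Yamaguti homomorphism and to the defining formula for $L(\pi)$ on $\ider$. The mixed $\tilde{T} \times \ider(\tilde{T})$ case is exactly the intertwining relation just established. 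The $\ider(\tilde{T}) \times \ider(\tilde{T})$ case follows because, using the intertwining relation twice, one has $\pi \circ [\tilde{u}_1, \tilde{u}_2] = [L(\pi)(\tilde{u}_1), L(\pi)(\tilde{u}_2)] \circ \pi$, and since $\pi$ is surjective an element of $\mathrm{End}(T)$ is determined by its composition with $\pi$ (whenever such a factorization exists), so the $\ider(T)$-component of the bracket on the $L(T)$ side matches $L(\pi)([\tilde{u}_1, \tilde{u}_2])$.

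Surjectivity of $L(\pi)$ follows from surjectivity of $\pi$ on $T$-components together with the fact that $\ider(T)$ is spanned by the $D_{x, y}$'s, each of which is the image of $D_{\tilde{x}, \tilde{y}}$ for any lifts. That $L(\pi)$ is a homomorphism of reductive triples is immediate from the two inclusions $L(\pi)(\tilde{T}) \subseteq T$ and $L(\pi)(\ider(\tilde{T})) \subseteq \ider(T)$ built into the construction. Finally, uniqueness is forced: any Lie algebra homomorphism extending $(\pi, 0)$ on $\tilde{T}$ must send $D_{\tilde{x}, \tilde{y}}$, which is the $\ider(\tilde{T})$-component of $[(\tilde{x}, 0), (\tilde{y}, 0)]$, to the $\ider(T)$-component of $[(\pi(\tilde{x}), 0), (\pi(\tilde{y}), 0)]$, namely $D_{\pi(\tilde{x}), \pi(\tilde{y})}$, which pins the map down on a spanning set of $\ider(\tilde{T})$.

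The last two assertions of the lemma are already visible in the construction: the formula $L(\pi)(D_{\tilde{x}, \tilde{y}}) = D_{\pi(\tilde{x}), \pi(\tilde{y})}$ is the definition, and $u \circ \pi = \pi \circ \tilde{u}$ is the intertwining relation proved in the well-definedness step. The genuine difficulty is isolated in that single well-definedness argument; everything after it is bookkeeping driven by (\ref{eq_lt}) and the surjectivity of $\pi$.
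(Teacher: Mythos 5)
Your proof is correct, and since the paper itself does not prove this lemma (it simply cites \cite[Proposition 2.30]{Takahashi2021}), your argument supplies exactly the standard reasoning one would expect there: well-definedness of $D_{\tilde{x},\tilde{y}}\mapsto D_{\pi(\tilde{x}),\pi(\tilde{y})}$ via surjectivity of $\pi$, the intertwining relation $\pi\circ\tilde{u}=L(\pi)(\tilde{u})\circ\pi$, the three bracket checks against (\ref{eq_lt}), and uniqueness from $D_{\tilde{x},\tilde{y}}$ being the $\ider(\tilde{T})$-component of $[(\tilde{x},0),(\tilde{y},0)]$. The only point worth spelling out slightly more is the uniqueness step: one should subtract $F((\tilde{x}*\tilde{y},0))=(\pi(\tilde{x})*\pi(\tilde{y}),0)$ from $F([(\tilde{x},0),(\tilde{y},0)])$ to conclude $F((0,D_{\tilde{x},\tilde{y}}))=(0,D_{\pi(\tilde{x}),\pi(\tilde{y})})$, since a priori $F$ is only assumed to be a Lie algebra homomorphism, not one respecting the decomposition; your phrasing implicitly does this, so it is only a matter of exposition.
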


This applies to $\pi$, but not to $\iota$. 
Now we consider a condition 
which guarantees the existence of a natural homomorphism $L(T)\to L(\tilde{T})$. 

\begin{notation}\label{notation_l_pi_i}
Let $T$ and $\tilde{T}$ be Lie-Yamaguti algebras, 
$\pi: \tilde{T}\to T$ a surjective homomorphism 
and $\iota: T\to \tilde{T}$ a homomorphism 
such that $\pi\circ\iota=\mathrm{id}_T$. 

Then $L(\pi, \iota): L_{L(\tilde{T})}(\iota(T))\to L(T)$ denotes the restriction 
of $L(\pi): L(\tilde{T})\to L(T)$ to $L_{L(\tilde{T})}(\iota(T))$. 
\end{notation}

\begin{definition}
A pair of Lie-Yamaguti algebra homomorphisms $\pi: \tilde{T}\to T$ 
and $\iota: T\to \tilde{T}$ with $\pi\circ\iota=\mathrm{id}_T$ is called \emph{tight} 
if the induced homomorphism $L(\pi, \iota): L_{L(\tilde{T})}(\iota(T))\to L(T)$ 
is an isomorphism. 

A representation $V$ of a Lie-Yamaguti algebra $T$ is called \emph{tight} 
if the pair of the induced homomorphisms $T\oplus V\to T$ 
and $T\to T\oplus V$ is tight. 
We denote the full subcategory of $\mathrm{Rep}(T)$ 
consisting of tight representations by $\mathrm{Rep}^{\mathrm{t}}(T)$.

We say a representation of an infinitesimal $s$-manifold $(T, \sigma)$ is \emph{tight} 
if it is tight as a representation of the Lie-Yamaguti algebra $T$, 
and denote the subcategory of $\mathrm{Rep}(T, \sigma)$ 
consisting of tight representations by $\mathrm{Rep}^{\mathrm{t}}(T, \sigma)$. 
\end{definition}

\begin{lemma}\label{lem_tight}
Let $(V, \rho, \theta, \delta)$ be a representation of a Lie-Yamaguti algebra $T$. 
Then the following are equivalent. 
\begin{enumerate}[(a)]
\item
The representation $V$ is tight. 
\item
If $\sum_{i=1}^k D_{x_i, y_i}=0$ for $x_i, y_i\in T$, 
then $\sum_{i=1}^k \delta(x_i, y_i) = 0$. 
\item
There is a linear map $\ider(T)\to \mathrm{End}(V)$ 
which maps $D_{x, y}$ to $\delta(x, y)$ for any $x, y\in T$. 
\end{enumerate}
\end{lemma}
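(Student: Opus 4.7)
The plan is to prove (b) $\Leftrightarrow$ (c) by a direct linear algebra argument, and then prove (a) $\Leftrightarrow$ (b) by explicitly computing the kernel of the map $L(\pi, \iota)$ associated to the split extension from Proposition \ref{prop_ly_rep_ext}.

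The equivalence of (b) and (c) is a general fact: condition (b) says exactly that the assignment $D_{x,y} \mapsto \delta(x,y)$, extended bilinearly from $T \times T$ to $\mathrm{End}(V)$, is well-defined modulo the relations defining $\ider(T)$ as a quotient of $T \otimes T / \langle x \otimes x\rangle$. Writing this out carefully with a universal property of spans gives (b) $\Rightarrow$ (c), and (c) $\Rightarrow$ (b) is tautological.

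For (a) $\Leftrightarrow$ (b), let $\tilde{T} = T \oplus V$ with the Lie-Yamaguti structure of Proposition \ref{prop_ly_rep_ext}, $\pi: \tilde{T} \to T$ the projection, and $\iota: T \to \tilde{T}$ the zero section. First I would compute, directly from the formula (\ref{eq_rep_cor}) and the definition of $L(\tilde{T})$, that the inner derivation $D_{(x,0), (y,0)} \in \ider(\tilde{T})$ acts on a general element $(z, v) \in \tilde{T}$ as
\[
D_{(x,0), (y,0)}(z, v) = \bigl([x, y, z], \delta(x, y) v\bigr).
\]
Next, by Proposition \ref{prop_L_gives_subalg_ideal} (4), $L_{L(\tilde{T})}(\iota(T)) = \iota(T) \oplus \ider_{\tilde{T}}(\iota(T))$, so a general element has the form $\bigl((x, 0), \sum c_i D_{(x_i, 0), (y_i, 0)}\bigr)$. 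Under $L(\pi, \iota)$ it goes to $(x, \sum c_i D_{x_i, y_i})$; surjectivity of $L(\pi, \iota)$ onto $L(T) = T \oplus \ider(T)$ is then immediate.

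The main point is to analyze the kernel. An element lies in $\mathrm{Ker}\, L(\pi, \iota)$ precisely when $x = 0$ and $\sum c_i D_{x_i, y_i} = 0$ in $\ider(T)$. By the formula displayed above, $\sum c_i D_{(x_i, 0), (y_i, 0)}$ then acts as zero on the $T$-component of every $(z, v)$, and as $\sum c_i \delta(x_i, y_i)$ on the $V$-component. Hence this element vanishes in $\ider(\tilde{T}) \subseteq \mathrm{End}(\tilde{T})$ if and only if $\sum c_i \delta(x_i, y_i) = 0$ in $\mathrm{End}(V)$. Thus $L(\pi, \iota)$ is injective if and only if condition (b) holds, which combined with surjectivity gives (a) $\Leftrightarrow$ (b). The only mildly subtle point is correctly identifying when the formal sum $\sum c_i D_{(x_i,0),(y_i,0)}$ is zero as an endomorphism; but this is handled cleanly by the explicit action formula above.
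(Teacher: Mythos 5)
Your proof is correct and follows essentially the same route as the paper: compute the action of $\sum D_{(x_i,0),(y_i,0)}$ on $(z,v)$ as $\bigl((\sum D_{x_i,y_i})z, (\sum\delta(x_i,y_i))v\bigr)$, identify the kernel of $L(\pi,\iota)$ inside $\ider_{T\oplus V}(T\oplus 0)$, and conclude that the kernel vanishes exactly when (b) holds, with (b) $\Leftrightarrow$ (c) being immediate. Your explicit remark about surjectivity and about well-definedness of $\sum\delta(x_i,y_i)$ only makes explicit what the paper leaves implicit.
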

\begin{proof}
It is obvious that (b) and (c) are equivalent. 

An element $\tilde{u}$ of the kernel of 
$L_{L(T\oplus V)}(T\oplus 0)=(T\oplus 0)\oplus \ider_{T\oplus V}(T\oplus 0)\to L(T)$ 
is obviously in $\ider_{T\oplus V}(T\oplus 0)$. 
Let $\tilde{u}=\sum D_{(x_i, 0), (y_i, 0)}$ be an element of $\ider_{T\oplus V}(T\oplus 0)$. 
We have 
\begin{align*}
\tilde{u}(z, v) & =
\sum D_{(x_i, 0), (y_i, 0)}(z, v)=\sum [(x_i, 0), (y_i, 0), (z, v)] \\
& =\left(\left(\sum D_{x_i, y_i}\right) z, \left(\sum \delta(x_i, y_i)\right)v\right), 
\end{align*}
and $\tilde{u}$ is in the kernel if and only if $\sum D_{x_i, y_i}=0$. 
Thus, the kernel is $0$ if and only if (b) holds. 
\end{proof}

\begin{proposition}
(1)
Let $T$ be a Lie-Yamaguti algebra 
and $(V, \rho, \theta, \delta)$ a tight representation of $T$. 
If $(W, \rho', \theta', \delta')$ is a subrepresentation or a quotient representation of $V$, 
then $W$ is tight. 

Hence, $\mathrm{Rep}^{\mathrm{t}}(T)$  
is an abelian subcategory of $\mathrm{Rep}(T)$. 

(2)
Let $(T, \sigma)$ be an infinitesimal $s$-manifold 
and $(V, \rho, \theta, \delta, \psi)$ a tight representation of $(T, \sigma)$. 
If $(W, \rho', \theta', \delta', \psi')$ is a subrepresentation or a quotient representation of $V$, 
then $W$ is tight. 

Hence, $\mathrm{Rep}^{\mathrm{t}}(T, \sigma)$  
is an abelian subcategory of $\mathrm{Rep}(T, \sigma)$. 
\end{proposition}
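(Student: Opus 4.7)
The plan is to reduce everything to the characterization in Lemma \ref{lem_tight}: a representation $(V,\rho,\theta,\delta)$ of $T$ is tight if and only if any relation $\sum_i D_{x_i,y_i}=0$ in $\ider(T)$ forces $\sum_i \delta(x_i,y_i)=0$ in $\mathrm{End}(V)$. With this criterion, tightness becomes a closed linear condition on the map $\ider(T) \dashrightarrow \mathrm{End}(V)$ sending $D_{x,y} \mapsto \delta(x,y)$, and the passage to subobjects and quotients becomes essentially formal.

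First I would handle subrepresentations. If $(W,\rho',\theta',\delta')$ is a subrepresentation of $V$, then $\delta'(x,y)=\delta(x,y)|_W$ by the description in Proposition \ref{prop_ly_rep_ext}. Given a relation $\sum_i D_{x_i,y_i}=0$, tightness of $V$ yields $\sum_i \delta(x_i,y_i)=0$ on $V$, whose restriction is $\sum_i \delta'(x_i,y_i)=0$ on $W$. Next I would handle quotients $W=V/U$: here $\delta'(x,y)$ is the operator induced on $V/U$, so the vanishing of $\sum_i \delta(x_i,y_i)$ on $V$ descends to $W$. Both arguments use only the criterion (b) of Lemma \ref{lem_tight}.

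For the ``abelian subcategory'' assertion, I would observe that $\mathrm{Rep}^{\mathrm{t}}(T)$ is a full subcategory of the abelian category $\mathrm{Rep}(T)$, so it suffices to show it is closed under kernels, cokernels, and finite direct sums, and that these are computed as in $\mathrm{Rep}(T)$. Kernels of morphisms of representations are subrepresentations and cokernels are quotient representations, so these cases follow from the first paragraph. For the direct sum $V\oplus V'$ of two tight representations, $\delta_{V\oplus V'}(x,y)=\delta_V(x,y)\oplus\delta_{V'}(x,y)$, and a relation $\sum_i D_{x_i,y_i}=0$ produces zero on each summand and hence on the sum. This also shows the inclusion functor is exact.

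For part (2), the observation is that tightness of a representation $(V,\rho,\theta,\delta,\psi)$ of $(T,\sigma)$ depends only on the underlying Lie-Yamaguti data $(V,\rho,\theta,\delta)$, and subrepresentations and quotient representations of $(T,\sigma)$-representations have underlying $T$-representations that are subrepresentations and quotient representations in $\mathrm{Rep}(T)$; the automorphism $\psi$ restricts or descends compatibly. Hence (2) reduces to (1) with no additional verification beyond noting that the ambient category $\mathrm{Rep}(T,\sigma)$ is abelian and that the forgetful functor to $\mathrm{Rep}(T)$ preserves the relevant structure. I expect no serious obstacle; the only subtlety is being explicit that ``abelian subcategory'' means the inclusion functor is exact, which is automatic here because kernels and cokernels in $\mathrm{Rep}^{\mathrm{t}}$ coincide with those in $\mathrm{Rep}$.
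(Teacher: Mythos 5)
Your proposal is correct and follows essentially the same route as the paper: both reduce tightness to criterion (b) of Lemma \ref{lem_tight}, note that $\delta'$ on a subrepresentation or quotient is induced from $\delta$, so the relation $\sum_i D_{x_i,y_i}=0 \Rightarrow \sum_i\delta(x_i,y_i)=0$ passes to $W$, and deduce (2) directly from (1). Your extra remarks on direct sums and exactness of the inclusion only make explicit what the paper leaves implicit in its one-line "Hence" step.
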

\begin{proof}
(1)
For any $x, y\in T$, 
$\delta'(x, y)$ is induced from $\delta(x, y)$, 
and so the condition Lemma \ref{lem_tight} (b) for $V$ 
implies the same condition for $W$. 

(2)
Obvious from (1). 
\end{proof}

\begin{example}\label{ex_non_tight}
Let us give an example of a representation which is not tight. 
Let $\fg$ be a Lie algebra 
and $T=\fg_{1, 0}$ with the notation of Example \ref{ex_l_to_ly}. 
This is $\fg$ regarded as a Lie-Yamaguti algebra with the trivial triple product, 
and $L(T)=\fg$ as a Lie algebra. 

Let $V$ be a $1$-dimensional space and 
$\lambda: \fg\to\mathbb{K}$ a linear function. 
We define a representation of $T$ on $V$ 
by $\rho(x)(v)=\lambda(x)v$, $\theta(x, y)(v)=0$ and $\delta(x, y)(v)=-\lambda([x, y])v$. 

The induced operations on $\tilde{T}:=T\oplus V$ are given by 
\begin{align*}
(x, u)*(y, v) & =([x, y], \lambda(x)v-\lambda(y)u), \\
[(x, u), (y, v), (z, w)] & =(0, -\lambda([x, y])w), 
\end{align*}
and $D_{(x, u), (y, v)}=-\lambda([x, y])p_V$ where $p_V: T\oplus V\to V$ is the projection. 
Let $\iota: T\to\tilde{T}$ be the zero section. 
Assuming that $\lambda([\fg, \fg])\not=0$ (e.g., $[\fg, \fg]=\fg$ and $\lambda\not=0$), 
$L_{L(\tilde{T})}(\iota(T))$ is $\iota(T)\oplus \mathbb{K}p_V$, 
while $L(T)=T$, 
and hence this representation is not tight. 

This construction can be generalized as follows. 
We take a Lie algebra homomorphism 
$\bar{\lambda}: \fg\to\fa$, 
a central extension $\tilde{\fa}\to\fa$, 
a representation $(V, \alpha)$ of $\tilde{\fa}$, 
and a linear lift $\lambda: \fg\to\tilde{\fa}$ of $\bar{\lambda}$. 
Then $\rho(x)=\alpha(\lambda(x))$, $\theta(x, y)\equiv 0$ 
and $\delta(x, y):=\alpha([\lambda(x), \lambda(y)]-\lambda([x, y]))$ 
defines a representation of $\fg_{1, 0}$. 
\end{example}

\begin{example}
Let us see that an extension of tight representations is not necessarily tight. 
For $\fg$, $T$ and $\lambda: \fg\to\mathbb{K}$ 
as in the previous example, 
let $V=\mathbb{K}^2$  
and $\rho(x)=\begin{pmatrix} 0 & \lambda(x) \\ 0 & 0\end{pmatrix}$, 
$\theta\equiv 0$ and 
$\delta(x, y)=\begin{pmatrix} 0 & -\lambda([x, y]) \\ 0 & 0\end{pmatrix}$. 
This can be seen as the case where $\tilde{\fa}=\mathbb{K}$, $\fa=0$ 
and the representation $\alpha$ of $\tilde{\fa}$ on $V$ is given by 
$c\mapsto \begin{pmatrix} 0 & c \\ 0 & 0\end{pmatrix}$. 

Then $V$ is an extension of the trivial module $\mathbb{K}$ by $\mathbb{K}$, 
and while $\mathbb{K}$ is tight, $V$ is not if $\lambda([\fg, \fg])\not=0$. 
\end{example}

The representations associated to representations of a reductive triple 
(Proposition \ref{prop_fn_rly}) are tight under a certain condition 
on the triple. 

\begin{proposition}
Let $(\fg, \fm, \fh)$ be a reductive triple and 
assume that the action of $[\fm, \fm]_{\fh}$ on $\fm$ 
is effective. 
Then, for any representation $(M, M_{\mathrm{n}}, M_{\mathrm{s}})$ of $(\fg, \fm, \fh)$, 
the representation $M_{\mathrm{n}}$ of $\fm$ is tight. 
Consequently, 
we can restrict the codomain of the functor 
$\mathcal{RLY}_{(\fg, \fm, \fh)}: \mathrm{Rep}(\fg, \fm, \fh)
\to \mathrm{Rep}(\fm)$ 
to obtain a functor 
\[
\mathrm{Rep}(\fg, \fm, \fh)
\to \mathrm{Rep}^{\mathrm{t}}(\fm), 
\]
which we again denote by $\mathcal{RLY}_{(\fg, \fm, \fh)}$. 
\end{proposition}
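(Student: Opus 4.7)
The plan is to verify criterion (b) of Lemma \ref{lem_tight} for the representation $M_{\mathrm{n}}$ of $\fm$ obtained from $(M, M_{\mathrm{n}}, M_{\mathrm{s}})$ via (\ref{eq_rrt_to_rly}). The second assertion about restricting the codomain of $\mathcal{RLY}_{(\fg, \fm, \fh)}$ is then immediate, since $\mathrm{Rep}^{\mathrm{t}}(\fm)$ is a full subcategory of $\mathrm{Rep}(\fm)$.

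First I would recall, from the definition of the Lie-Yamaguti triple product on $\fm$ and the formula $D_{x,y}(z)=[x,y,z]$, that for all $x,y\in\fm$ we have
\[
D_{x, y} = \mathrm{ad}([x, y]_{\fh})|_{\fm} \quad\text{in } \mathrm{End}(\fm).
\]
Hence if $x_1,\dots,x_k,y_1,\dots,y_k\in\fm$ satisfy $\sum_i D_{x_i,y_i}=0$, then the element $h:=\sum_i [x_i,y_i]_{\fh}$, which lies in $[\fm,\fm]_{\fh}$ by construction, acts trivially on $\fm$. The hypothesis that the action of $[\fm,\fm]_{\fh}$ on $\fm$ is effective then forces $h=0$ in $\fh$.

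Next I would use the explicit formula $\delta(x,y)(m)=[x,y]_{\fh}\cdot m$ from (\ref{eq_rrt_to_rly}): for every $m\in M_{\mathrm{n}}$,
\[
\sum_i \delta(x_i,y_i)(m) = \sum_i [x_i,y_i]_{\fh}\cdot m = h\cdot m = 0,
\]
so $\sum_i\delta(x_i,y_i)=0$ in $\mathrm{End}(M_{\mathrm{n}})$. This is exactly condition (b) of Lemma \ref{lem_tight}, and so $M_{\mathrm{n}}$ is tight.

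This argument is essentially a one-line unwinding of definitions; I do not anticipate any real obstacle. The only point that deserves care is making sure that the identification $D_{x,y}=\mathrm{ad}([x,y]_{\fh})|_{\fm}$ is invoked correctly — it depends on the convention for the triple bracket on $\fm$ coming from the reductive triple $(\fg,\fm,\fh)$, which is precisely what is recorded in Proposition \ref{prop_ly_from_rt}. Once this is in hand, the effectiveness hypothesis is used exactly once, to pass from an identity among derivations of $\fm$ to an identity in $\fh$, which then transports to $M_{\mathrm{n}}$ via the $\fh$-action.
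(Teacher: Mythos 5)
Your proposal is correct and follows exactly the paper's own argument: verify Lemma \ref{lem_tight}\,(b), use $D_{x,y}=\mathrm{ad}([x,y]_{\fh})|_{\fm}$ together with the effectiveness of the $[\fm,\fm]_{\fh}$-action to get $\sum_i[x_i,y_i]_{\fh}=0$, and then conclude via $\delta(x,y)(m)=[x,y]_{\fh}m$ from (\ref{eq_rrt_to_rly}). Nothing is missing.
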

\begin{proof}
We use the criterion (b) in Lemma \ref{lem_tight}. 
Assume $\sum D_{x_i, y_i}=0$ for $x_i, y_i\in\fm$. 
Then, for any $z\in \fm$, 
$\sum D_{x_i, y_i}(z)=[\sum [x_i, y_i]_{\fh}, z]=0$. 
By the assumption, $\sum [x_i, y_i]_{\fh}=0$. 
Thus, for any $m\in M_{\mathrm{n}}$, 
we have $\sum \delta(x_i, y_i)(m)=(\sum [x_i, y_i]_{\fh})m=0$ 
by (\ref{eq_rrt_to_rly}). 
\end{proof}

This applies in particular to the triples of the form $(L(T), T, \ider(T))$. 
If $\fg$ is simple and $\fm\not=0$, 
then $(\fg, \fm, \fh)\cong (L(\fm), \fm, \ider(\fm))$ by Lemma \ref{lem_semisimple} (2) 
and the proposition applies. 
(If $\fm=0$, then $[\fm, \fm]_\fh=0$ acts effectively on $\fm$ again, 
although this case is not so interesting.)

\medbreak
To study the tightness of representations, 
we take a closer look at the homomorphisms $L(\pi, \iota)$ 
(see Notation \ref{notation_l_pi_i}). 

\begin{lemma}\label{lem_l_pi_iota}
Let $T$ and $\tilde{T}$ be Lie-Yamaguti algebras, 
$\pi: \tilde{T}\to T$ a surjective homomorphism 
and $\iota: T\to \tilde{T}$ a homomorphism 
such that $\pi\circ\iota=\mathrm{id}_T$. 
Then the following hold. 

(1) 
$L(\pi, \iota)$ is surjective. 

(2)
The kernel $\mathrm{Ker}\ L(\pi, \iota)$ is contained 
in $\ider_{\tilde{T}}(\iota(T))$, and also 
in the center of $L_{L(\tilde{T})}(\iota(T))$. 
\end{lemma}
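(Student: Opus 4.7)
The plan is to dispatch (1) by direct computation, and to reduce (2) to a structural fact already in the paper by recognizing that $L_{L(\tilde T)}(\iota(T))$ carries a canonical minimal reductive triple structure, so that Proposition~\ref{prop_isom_to_L}(2) can be invoked.

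For (1), I would use the decomposition $L_{L(\tilde T)}(\iota(T)) = \iota(T) \oplus \ider_{\tilde T}(\iota(T))$ from Proposition~\ref{prop_L_gives_subalg_ideal}(4), together with $L(T) = T \oplus \ider(T)$. For each $x \in T$, the element $\iota(x)$ maps to $\pi(\iota(x)) = x$; for each generator $D_{x, y} \in \ider(T)$, the preceding lemma gives $L(\pi)(D_{\iota(x), \iota(y)}) = D_{\pi(\iota(x)), \pi(\iota(y))} = D_{x, y}$. This exhausts both summands of $L(T)$.

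For (2), the first containment follows by decomposing an arbitrary $\tilde u \in L_{L(\tilde T)}(\iota(T))$ as $\iota(x) + \tilde d$ with $x \in T$ and $\tilde d \in \ider_{\tilde T}(\iota(T))$: its image $L(\pi, \iota)(\tilde u) = x + L(\pi)(\tilde d)$ lies in $T \oplus \ider(T)$, so vanishing forces $x = 0$ and hence $\tilde u \in \ider_{\tilde T}(\iota(T))$. For the central containment, I would apply Proposition~\ref{prop_L_gives_subalg_ideal}(1) to the Lie-Yamaguti subalgebra $\iota(T) \subseteq \tilde T$ inside the reductive triple $(L(\tilde T), \tilde T, \ider(\tilde T))$. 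This yields the reductive triple $(L_{L(\tilde T)}(\iota(T)),\, \iota(T),\, [\iota(T), \iota(T)]_{\ider(\tilde T)})$; the bracket formula~(\ref{eq_lt}) gives $[\iota(x), \iota(y)]_{\ider(\tilde T)} = D_{\iota(x), \iota(y)}$, so the $\fh$-piece equals $\ider_{\tilde T}(\iota(T))$ and the triple is minimal by its very construction. Since $\mathrm{Ker}\, L(\pi, \iota)$ is an ideal of the Lie algebra $L_{L(\tilde T)}(\iota(T))$ contained in the $\fh$-piece, Proposition~\ref{prop_isom_to_L}(2) places it in the center.

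The only conceptual step is recognizing that the reductive triple attached to the subalgebra $\iota(T) \subseteq \tilde T$ is automatically minimal, after which Proposition~\ref{prop_isom_to_L}(2) handles the centrality for free; everything else is bookkeeping inside the decompositions $L(T) = T \oplus \ider(T)$ and $L_{L(\tilde T)}(\iota(T)) = \iota(T) \oplus \ider_{\tilde T}(\iota(T))$. I do not expect any substantive obstacle.
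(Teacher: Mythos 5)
Your proof is correct, and parts (1) and the first containment in (2) are essentially the paper's own computations: surjectivity is seen on both summands of $L(T)=T\oplus\ider(T)$ via $\iota(x)\mapsto x$ and $D_{\iota(x),\iota(y)}\mapsto D_{x,y}$ (the latter from the preceding lemma on $L(\pi)$), and a kernel element has vanishing $\iota(T)$-component for the reason you give. Where you genuinely diverge is the centrality claim. The paper proves it by hand: for $\tilde u\in\ider_{\tilde T}(\iota(T))$ in the kernel, the compatibility $\pi\circ\tilde u=u\circ\pi$ with $u=L(\pi)(\tilde u)=0$ gives $\tilde u(\tilde T)\subseteq\mathrm{Ker}\,\pi$, while $\tilde u(\iota(T))\subseteq\iota(T)$ because $\iota(T)$ is a subalgebra; since $\iota(T)\cap\mathrm{Ker}\,\pi=0$, $\tilde u$ annihilates $\iota(T)$, hence commutes with $\iota(T)$ and, by the Jacobi identity, with $[\iota(T),\iota(T)]$, so it is central in $L_{L(\tilde T)}(\iota(T))$. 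You instead note that $(L_{L(\tilde T)}(\iota(T)),\iota(T),\ider_{\tilde T}(\iota(T)))$ is a reductive triple by Proposition \ref{prop_L_gives_subalg_ideal} (1), (4), that it is minimal because its $\fh$-part is spanned by the elements $D_{\iota(x),\iota(y)}=[\iota(x),\iota(y)]_{\ider(\tilde T)}$, and that the kernel is a Lie ideal contained in this $\fh$-part, so Proposition \ref{prop_isom_to_L} (2) yields centrality. Both routes are sound; yours is shorter given the structural results already proved and makes transparent that centrality is a general feature of ideals lying in the $\fh$-part of a minimal reductive triple, whereas the paper's direct argument is self-contained at this point and additionally exhibits the slightly stronger fact that kernel elements annihilate $\iota(T)$, which is the mechanism behind centrality in either account.
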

\begin{proof}
Let $T'=\iota(T)$. 

(1)
From $\pi\circ\iota=\mathrm{id}_T$ it follows that
$L(\pi, \iota)(T')=\pi(T')=\pi(\iota(T))=T$, 
and consequently that 
\[
\mathrm{Im}\ L(\pi, \iota) = L(\pi, \iota)(T'+[T', T']) =
T+[T, T]=L(T). 
\]

(2)
Since $L(\pi, \iota)$ restricts to $T'\overset\sim\to T$, 
an element of $\mathrm{Ker}\ L(\pi, \iota)$ can be written as $(0, \tilde{u})$, 
$\tilde{u}\in\ider_{\tilde{T}}(T')$, 
i.e., the first statement holds. 

Since $\tilde{u}: \tilde{T}\to\tilde{T}$ induces the zero map on $T$, 
we have $\tilde{u}(T')\subseteq \tilde{u}(\tilde{T})\subseteq\mathrm{Ker}\ \pi$. 
We also see $\tilde{u}(T')\subseteq T'$ 
from the fact that $T'$ is closed under the triple product. 
Thus we have $\tilde{u}(T')=0$, 
or in terms of the Lie bracket on $L(\tilde{T})$, 
$\tilde{u}$ commutes with $T'$. 
It also commutes with $[T', T']$ by Jacobi identity, 
and so belongs to the center of $L_{L(\tilde{T})}(T')$. 
\end{proof}

\begin{corollary}\label{cor_ss_section}
Let $T$ be an $L$-semisimple Lie-Yamaguti algebra, 
$\tilde{T}$ a Lie-Yamaguti algebra and 
$\pi: \tilde{T}\to T$ and $\iota: T\to \tilde{T}$ homomorphisms of Lie-Yamaguti algebras 
such that $\pi\circ\iota=\mathrm{id}_T$. 

Then $L(\pi, \iota)$ admits a unique section $f: L(T)\to L_{L(\tilde{T})}(\iota(T))$, 
i.e., a Lie algebra homomorphism $f$ such that $L(\pi, \iota)\circ f=\mathrm{id}_{L(T)}$. 
In other words, 
there is a unique Lie subalgebra $\fg$ of $L_{L(\tilde{T})}(\iota(T))$ 
such that $L(\pi, \iota)|_{\fg}$ is an isomorphism. 
Specifically, $\fg=[L_{L(\tilde{T})}(\iota(T)), L_{L(\tilde{T})}(\iota(T))]$ holds. 
\end{corollary}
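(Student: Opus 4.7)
My plan is to recognize the extension as central over a semisimple Lie algebra and split it using Levi's theorem. I would set $E := L_{L(\tilde{T})}(\iota(T))$ and $K := \mathrm{Ker}\, L(\pi, \iota)$. By Lemma \ref{lem_l_pi_iota}, $L(\pi, \iota): E \to L(T)$ is surjective and $K$ is contained in the center of $E$, hence abelian. Since $E$ is finite-dimensional (as $\tilde{T}$, and therefore $L(\tilde{T})$, is), $K$ is solvable, and $E/K \cong L(T)$ is semisimple, so $K$ must be exactly the radical of $E$. Levi's theorem then provides a decomposition $E = K \oplus \mathfrak{s}$ with $\mathfrak{s}$ a semisimple subalgebra; the restriction $L(\pi, \iota)|_{\mathfrak{s}}$ has trivial kernel $\mathfrak{s} \cap K = 0$ and surjects onto $L(T)$ by a dimension count, and its inverse is the desired section $f$.

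For uniqueness, given two sections $f_1, f_2$, I would set $\phi := f_2 - f_1$, which takes values in $K$ since $L(\pi, \iota) \circ \phi = 0$. Using centrality of $K$,
\begin{align*}
\phi([x,y]) &= [f_2(x), f_2(y)] - [f_1(x), f_1(y)] \\
&= [f_1(x), \phi(y)] + [\phi(x), f_1(y)] + [\phi(x), \phi(y)] = 0.
\end{align*}
Thus $\phi$ vanishes on $[L(T), L(T)] = L(T)$, forcing $f_1 = f_2$. Setting $\fg := f(L(T))$ and using $E = K \oplus \fg$, the centrality of $K$ together with the fact that $\fg \cong L(T)$ is perfect yields $[E, E] = [\fg, \fg] = \fg$, which is the explicit description in the statement.

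The only substantive input is Levi's theorem, which is why finite-dimensionality of $\tilde{T}$ matters; once the central-extension structure is identified, both existence and uniqueness of the section amount to Whitehead's lemma for the trivial $L(T)$-action on $K$, and the computation of $[E, E]$ is essentially one line. I do not foresee any real obstacle beyond checking that $K$ actually coincides with the radical, which follows immediately from the dichotomy between solvability of $K$ and semisimplicity of the quotient.
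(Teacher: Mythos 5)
Your proof is correct and follows essentially the same route as the paper: identify $\mathrm{Ker}\,L(\pi,\iota)$ as the central radical of $L_{L(\tilde{T})}(\iota(T))$, apply Levi's theorem for existence, and use centrality of the kernel together with perfectness of the semisimple $L(T)$ to get uniqueness and the description $\fg=[L_{L(\tilde{T})}(\iota(T)), L_{L(\tilde{T})}(\iota(T))]$. Your uniqueness step (the difference of two sections is a homomorphism into the center, vanishing on $[L(T),L(T)]=L(T)$) just makes explicit what the paper leaves implicit in its commutator computation.
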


Note that 
$f$ and $\fg$ are not necessarilly compatible with the reductive decomposition. 
See Example \ref{ex_not_compat_with_decomp}, 
and also Lemma \ref{lem_tightness}. 

\begin{proof}
Since $\mathrm{Ker}\ L(\pi, \iota)$ is abelian 
and $L(T)$ is semisimple, 
$\mathrm{Ker}\ L(\pi, \iota)$ is the radical of $L_{L(\tilde{T})}(\iota(T))$, 
and the existence of a section follows from Levi's theorem. 
We have a decomposition $L_{L(\tilde{T})}(\iota(T))=\fg\oplus \mathrm{Ker}\ L(\pi, \iota)$ 
as a vector space, 
where $\fg$ is a subalgebra of $L_{L(\tilde{T})}(\iota(T))$ isomorphic to $L(T)$, 
and by $[L(T), L(T)]=L(T)$ and Lemma \ref{lem_l_pi_iota} (2), 
we have $\fg=[\fg, \fg]=[L_{L(\tilde{T})}(\iota(T)), L_{L(\tilde{T})}(\iota(T))]$. 
\end{proof}

Now, assuming that $L(\pi, \iota)$ admits a section $f$, 
we study when $(\pi, \iota)$ is tight, i.e., 
$L_{L(\tilde{T})}(\iota(T))$ itself is isomorphic to $L(T)$.

First, we give a number of equivalent conditions. 

\begin{lemma}\label{lem_tightness}
Assume that 
$\pi: \tilde{T}\to T$ and 
$\iota: T\to \tilde{T}$ are homomorphisms of Lie-Yamaguti algebras 
satisfying $\pi\circ\iota = \mathrm{id}_T$, 
and that 
$f: L(T)\to L_{L(\tilde{T})}(\iota(T))$ is a Lie algebra homomorphism 
such that $L(\pi, \iota)\circ f = \mathrm{id}_{L(T)}$. 
Then the following are equivalent. 
\begin{enumerate}[(a)]
\item
$(\pi, \iota)$ is tight, i.e., 
$L(\pi, \iota)$ is an isomorphism. 
\item
$f$ is an isomorphism. 
\item
$f(L(T))$ is compatible with the reductive decomposition. 
\item
$f(T)\supseteq \iota(T)$. 
\item
$f(L(T))\supseteq \iota(T)$. 
\item
$f(T)\subseteq \iota(T)$. 
\end{enumerate}
\end{lemma}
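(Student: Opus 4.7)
The plan is to reduce all six conditions to a single numerical criterion on a ``defect'' linear map $c: T \to \mathrm{Ker}\, L(\pi,\iota)$ extracted from $f$. Throughout, I write $T' := \iota(T)$, so that by Proposition~\ref{prop_L_gives_subalg_ideal}(4) we have the reductive decomposition
\[
\tilde{L} := L_{L(\tilde{T})}(T') = T' \oplus \ider_{\tilde{T}}(T'),
\]
and by Lemma~\ref{lem_l_pi_iota} the kernel $K := \mathrm{Ker}\, L(\pi,\iota)$ lies in $\ider_{\tilde{T}}(T')$. Since $\pi\circ\iota = \mathrm{id}_T$, the restriction $L(\pi,\iota)|_{T'}$ is a linear bijection onto $T \subseteq L(T)$.

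The first step is structural. For $x\in T$, decompose $f(x)=f(x)_{T'}+f(x)_{\ider}$ and apply $L(\pi,\iota)\circ f = \mathrm{id}_{L(T)}$, comparing components in $L(T)=T\oplus\ider(T)$: the $T$-component forces $f(x)_{T'}=\iota(x)$, and the $\ider(T)$-component forces $f(x)_{\ider}\in K$. Hence there is a linear map $c: T\to K$ with $f(x)=\iota(x)+c(x)$. An analogous argument applied to $h\in\ider(T)$ shows $f(h)\in \ider_{\tilde{T}}(T')$. Also, since $L(\pi,\iota)$ is surjective and $f$ is a section, $f$ is injective and $\tilde{L}=f(L(T))\oplus K$ as vector spaces.

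Next I carry out the equivalences. (a)$\Leftrightarrow$(b) is immediate from $\tilde{L}=f(L(T))\oplus K$: both are equivalent to $K=0$. For the block (d), (e), (f): condition (f) says $c(x)=f(x)-\iota(x)\in T'\cap \ider_{\tilde{T}}(T')=0$, i.e.\ $c\equiv 0$; condition (d) says $T'\subseteq f(T)$, and since $f$ is injective and $\dim f(T)=\dim T'$, this forces $f(x)=\iota(x)$, i.e.\ $c\equiv 0$; (d)$\Rightarrow$(e) is trivial, and for (e)$\Rightarrow$(d) I take $x\in T$, write $\iota(x)=f(x')+f(h')$ with $x'\in T$, $h'\in\ider(T)$, project to $T'$ to get $x'=x$, then project to $\ider_{\tilde T}(T')$ to get $f(h')=-c(x)\in K$, and apply $L(\pi,\iota)$ to conclude $h'=0$ and hence $c(x)=0$. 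So (d), (e), (f) all reduce to $c\equiv 0$.

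The main link is that $c\equiv 0$ implies (b). Here I use that $f$ is a Lie algebra homomorphism: for $x,y\in T$, since $D_{x,y}=[x,y]_{L(T)}-x*y$,
\[
f(D_{x,y})=[f(x),f(y)]_{\tilde{L}}-f(x*y)=[\iota(x),\iota(y)]_{L(\tilde{T})}-\iota(x*y)=D_{\iota(x),\iota(y)},
\]
using the formula~\eqref{eq_lt} for the Lie bracket in $L(\tilde{T})$ and the fact that $\iota$ is a Lie–Yamaguti homomorphism. Hence $f(\ider(T))\supseteq \ider_{\tilde{T}}(T')$, so $f(L(T))=T'\oplus\ider_{\tilde{T}}(T')=\tilde{L}$ and $f$ is an isomorphism. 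Finally, (b)$\Rightarrow$(c) is trivial, and for (c)$\Rightarrow$(e) I observe that the projection of $f(L(T))$ to $T'$ already equals all of $T'$ (since $f(T)=\iota(T)+c(T)$ projects onto $T'$), so compatibility of $f(L(T))$ with the decomposition forces $f(L(T))\cap T'=T'$, i.e.\ (e). I expect the only nonroutine step to be the computation showing $c\equiv 0\Rightarrow f$ is surjective, since it is the one place the Lie-bracket structure of the enveloping algebra is genuinely used; the rest is component bookkeeping.
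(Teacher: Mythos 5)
Your proposal is correct and takes essentially the same route as the paper: both rest on the decomposition $L_{L(\tilde{T})}(\iota(T))=\iota(T)\oplus \ider_{\tilde{T}}(\iota(T))$ with $\mathrm{Ker}\,L(\pi,\iota)$ contained in the $\ider$-part, and on the fact that once $f(L(T))$ contains (equivalently, $f|_T$ equals) $\iota(T)$, surjectivity follows because $L_{L(\tilde{T})}(\iota(T))=\iota(T)+[\iota(T),\iota(T)]$ is generated by $\iota(T)$. Your defect map $c$ and the explicit computation $f(D_{x,y})=D_{\iota(x),\iota(y)}$ are just a spelled-out version of the paper's shorter observation that $f(L(T))$ is a subalgebra containing $\iota(T)$, so the difference is only in bookkeeping and in the order of the implications.
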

\begin{proof}
Let $T'=\iota(T)$ and $\fg=f(L(T))$. 

The equivalence of (a) and (b) is obvious. 
If (b) holds, 
then from $L(\pi, \iota)(T')=T$ and $L(\pi, \iota)(\ider_{\tilde{T}}(T'))=\ider(T)$ 
we have $f(T)=T'$ and $f(\ider(T))=\ider_{\tilde{T}}(T')$, 
and (c), (d) and (f) hold. 

Assume that (c) holds, i.e., 
$\fg=(\fg\cap T')\oplus (\fg\cap \ider_{L(\tilde{T})}(T'))$. 
Then $L(T)=\pi(\fg\cap T')\oplus L(\pi, \iota)(\fg\cap \ider_{L(\tilde{T})}(T'))$, 
and it follows that $\pi(\fg\cap T')=T$. 
Since $\pi|_{T'}: T'\to T$ is an isomorphism, (e) holds. 

If (f) holds, for any $x\in T$ we have $f(x)=\iota(y)$ for some $y\in T$, 
and sending by $L(\pi, \iota)$ we see that $x=y$. 
Thus $f|_T=\iota$, and (d) holds. 

The implication (d) $\Rightarrow$ (e) is obvious. 

Assume (e), i.e., $\fg\supseteq T'$. 
Then $L_{L(\tilde{T})}(T')=T'+[T', T']\subseteq \fg$. 
By Lemma \ref{lem_l_pi_iota} (1), $L_{L(\tilde{T})}(T')=\fg$ 
and $f$ is an isomorphism, so (e) $\Rightarrow$ (b) holds. 
\end{proof}

We give two sufficient conditions for the tightness. 
\begin{theorem}\label{thm_tightness}
Assume that 
$\pi: \tilde{T}\to T$ and 
$\iota: T\to \tilde{T}$ are homomorphisms of Lie-Yamaguti algebras 
satisfying $\pi\circ\iota = \mathrm{id}_T$, 
and that 
$f: L(T)\to L_{L(\tilde{T})}(\iota(T))$ is a Lie algebra homomorphism 
such that $L(\pi, \iota)\circ f = \mathrm{id}_{L(T)}$. 

(1)
If $[T, T, T]=T$, or equivalently $[\ider(T), T]=T$, 
then $(\pi, \iota)$ is tight. 

(2)
Assume that $(T, \sigma)$ and $(\tilde{T}, \tilde{\sigma})$ are infinitesimal $s$-manifolds, 
that $\pi$ and $\iota$ are homomorphisms of infinitesimal $s$-manifolds 
and that $[L(T), L(T)]=L(T)$. 
Then $(\pi, \iota)$ is tight. 

We may also drop the regularity of $\tilde{T}$: 
$(T, \sigma)$ is an infinitesimal $s$-manifold, 
$\tilde{T}$ is a Lie-Yamaguti algebra 
and $\tilde{\sigma}$ is an automorphism of $\tilde{T}$ 
satisfying $D_{\tilde{\sigma}(\tilde{x}), \tilde{\sigma}(\tilde{y})}=D_{\tilde{x}, \tilde{y}}$, 
and $\pi$ and $\iota$ are compatible with $\sigma$ and $\tilde{\sigma}$. 
\end{theorem}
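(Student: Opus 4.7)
The strategy is to apply Lemma \ref{lem_tightness} via criterion (f): it suffices to show $f(T) \subseteq \iota(T)$. Let $K := \ker L(\pi, \iota)$. By Lemma \ref{lem_l_pi_iota}, $K$ is contained in $\ider_{\tilde{T}}(\iota(T))$ and central in $L_{L(\tilde{T})}(\iota(T))$. Since $L(\pi, \iota)$ respects the reductive decomposition and restricts to an isomorphism on $\iota(T)$, each $x \in T$ has a unique decomposition $f(x) = \iota(x) + w_x$ with $w_x \in K$; both parts of the theorem reduce to showing $w_x = 0$.

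For (1), the key identity is $f([a, b, c]) = [f(D_{a,b}), f(c)]$ for $a, b, c \in T$. The same decomposition argument shows $f(D_{a,b}) \in \ider_{\tilde{T}}(\iota(T))$: its $\iota(T)$-component must vanish because $L(\pi, \iota)$ sends that summand into $T$, while $D_{a,b} \in \ider(T)$. Centrality of $w_c \in K$ then eliminates it from the bracket, giving $f([a,b,c]) = [f(D_{a,b}), \iota(c)] = f(D_{a,b})(\iota(c))$. Since $\ider_{\tilde{T}}(\iota(T))$ is spanned by operators $D_{\iota(a'), \iota(b')}$ and $D_{\iota(a'), \iota(b')}(\iota(c)) = \iota([a', b', c])$, this element lies in $\iota(T)$. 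Thus $w_{[a,b,c]} = 0$, and since $[T,T,T] = T$ (equivalent to $[\ider(T), T] = T$ via $D_{x,y}(z) = [x,y,z]$), it follows that $w \equiv 0$.

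For (2), the first step is uniqueness of $f$, which follows from $[L(T), L(T)] = L(T)$. If $f'$ is another Lie algebra section, then $d := f - f'$ takes values in the central, abelian $K$, and expanding $[f(a), f(b)] - [f'(a), f'(b)]$ via bilinearity shows that all cross-terms $[f'(a), d(b)]$, $[d(a), f'(b)]$, $[d(a), d(b)]$ vanish by centrality; hence $d([a,b]) = 0$, so $d$ vanishes on $[L(T), L(T)] = L(T)$. Next, the compatibility $\pi \tilde{\sigma} = \sigma \pi$ and $\tilde{\sigma} \iota = \iota \sigma$ extends to $L(\pi, \iota) \circ L(\tilde{\sigma}) = L(\sigma) \circ L(\pi, \iota)$ on $L_{L(\tilde{T})}(\iota(T))$, which $L(\tilde{\sigma})$ preserves. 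Hence $L(\tilde{\sigma}) \circ f \circ L(\sigma)^{-1}$ is another Lie algebra section and, by uniqueness, equals $f$. Restricting to $x \in T$ gives $\iota(\sigma(x)) + w_{\sigma(x)} = \tilde{\sigma}(\iota(x) + w_x) = \iota(\sigma(x)) + \tilde{\sigma}(w_x)$, so $w_{\sigma(x)} = \tilde{\sigma}(w_x)$.

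The final ingredient is that $L(\tilde{\sigma})$ acts as the identity on $\ider(\tilde{T})$, and hence on $K$. The general formula $L(\tilde{\sigma})(D_{\tilde{x}, \tilde{y}}) = D_{\tilde{\sigma}(\tilde{x}), \tilde{\sigma}(\tilde{y})}$ (from the automorphism property of $\tilde{\sigma}$) combined with the hypothesis $D_{\tilde{\sigma}(\tilde{x}), \tilde{\sigma}(\tilde{y})} = D_{\tilde{x}, \tilde{y}}$, which is given directly in the weakened version and follows from (ISM2) and (ISM3) in the regular case, yields exactly this triviality. Therefore $w_{\sigma(x)} = w_x$ for all $x$, and the linear map $x \mapsto w_x$ annihilates $(\sigma - \mathrm{id})(T) = T$ by (ISM0), giving $w \equiv 0$. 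The most delicate step is the uniqueness of $f$ in (2) together with the verification that everything respects the reductive decompositions; the rest is bookkeeping that places $w_x$ where centrality can eliminate it.
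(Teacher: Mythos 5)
Your proof is correct, and part (1) is essentially the paper's argument: you show $f(\ider(T))\subseteq\ider_{\tilde T}(\iota(T))$ and $f(T)\subseteq\iota(T)\oplus\mathrm{Ker}\,L(\pi,\iota)$, use centrality of the kernel (Lemma \ref{lem_l_pi_iota} (2)) to get $f([a,b,c])=[f(D_{a,b}),\iota(c)]\in\iota(T)$, and conclude by condition (f) of Lemma \ref{lem_tightness}; the paper condenses the same computation into $f(T)=f([\ider(T),T])=[f(\ider(T)),f(T)]\subseteq[\ider_{\tilde T}(\iota(T)),\iota(T)\oplus\mathrm{Ker}\,L(\pi,\iota)]\subseteq\iota(T)$. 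In part (2) you take a genuinely different, though closely related, route: you first prove \emph{uniqueness} of the section $f$ (the difference of two sections lands in the central kernel, so it dies on $[L(T),L(T)]=L(T)$), then use $L(\pi,\iota)\circ L(\tilde\sigma)=L(\sigma)\circ L(\pi,\iota)$ to see that $L(\tilde\sigma)\circ f\circ L(\sigma)^{-1}$ is again a section, hence equals $f$; equivariance plus the triviality of $L(\tilde\sigma)$ on $\ider(\tilde T)$ gives $w_{\sigma(x)}=L(\tilde\sigma)(w_x)=w_x$ (your ``$\tilde\sigma(w_x)$'' is a harmless abuse, since $w_x$ lies in $\ider_{\tilde T}(\iota(T))$, not in $\tilde T$), and invertibility of $\sigma-\mathrm{id}$ kills $w$, so condition (f) applies. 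The paper never invokes uniqueness: it notes that $f(L(T))=[L_{L(\tilde T)}(\iota(T)),L_{L(\tilde T)}(\iota(T))]$ is $L(\tilde\sigma)$-invariant, applies $L(\tilde\sigma)-1$ to $f((x,0))$ to obtain $\iota((\sigma-\mathrm{id})(x))\in f(L(T))$, and concludes by condition (e). Both arguments rest on the same ingredients (centrality of $\mathrm{Ker}\,L(\pi,\iota)$, $[L(T),L(T)]=L(T)$, $D_{\tilde\sigma(\tilde x),\tilde\sigma(\tilde y)}=D_{\tilde x,\tilde y}$, invertibility of $\sigma-\mathrm{id}$) and both cover the weakened hypothesis on $\tilde\sigma$; yours yields the slightly stronger intermediate facts that the section is unique and $\sigma$-equivariant, while the paper's is marginally shorter because stability of the image suffices.
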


\begin{remark}
The condition in (1) does not necessarily hold even for $L$-simple Lie-Yamaguti algebras: 
let $T$ be a simple Lie algebra regarded as a Lie-Yamaguti algebra 
with the trivial triple product. 
Then $\ider(T)=0$ and $L(T)=T$ is a simple Lie algebra, 
but $[T, T, T]=0$. 
\end{remark}

\begin{proof}
Let $T'=\iota(T)$ and $\fg=f(L(T))$. 

\smallbreak
(1)
From $L(\pi, \iota)\circ f=\mathrm{id}_{L(T)}$, 
we see that $L(\pi, \iota)(f((x, 0))-(\iota(x), 0))=0$ for any $x\in T$ 
and hence that $f(T)\subseteq T'\oplus \mathrm{Ker}\, L(\pi, \iota)$. 
On the other hand, from $L(\pi, \iota)(f((0, D_{x, y})))=(0, D_{x, y})$ 
we have 
$f(\ider(T))\subseteq L(\pi, \iota)^{-1}(\ider(T)) = \ider_{\tilde{T}}(T')$. 
Using the assumption and Lemma \ref{lem_l_pi_iota} (2), we have 
\[
f(T)=f([\ider(T), T]) 
= [f(\ider(T)), f(T)]
\subseteq [\ider_{\tilde{T}}(T'), T'\oplus \mathrm{Ker}\, L(\pi, \iota)]
\subseteq T', 
\]
and we are done by the previous lemma. 

\smallbreak
(2)
Let $L(\sigma)$ and $L(\tilde{\sigma})$ denote the automorphisms of $L(T)$ and $L(\tilde{T})$ 
induced by $\sigma$ and $\tilde{\sigma}$, respectively. 
By the previous lemma, it suffices to show that $T'\subseteq \fg$. 

From the compatibility of $\iota$ with $\sigma$ and $\tilde{\sigma}$, 
it follows that $\tilde{\sigma}(T')=T'$ and therefore that $L(\tilde{\sigma})$ 
restricts to an automorphism of $L_{L(\tilde{T})}(T')$. 
In the decomposition $L_{L(\tilde{T})}(T')=\fg\oplus \mathrm{Ker}\, L(\pi, \iota)$ as a vector space, 
$\mathrm{Ker}\, L(\pi, \iota)$ is in the center of $L_{L(\tilde{T})}(T')$ by Lemma \ref{lem_l_pi_iota} (2) 
and 
\[
[L_{L(\tilde{T})}(T'), L_{L(\tilde{T})}(T')]=[\fg, \fg]=\fg
\]
holds by the assumption $[L(T), L(T)]=L(T)$. 

It follows that $\fg$ is invariant under $L(\tilde{\sigma})$. 
Now, given any $x\in T$, write 
$(\tilde{x}, \tilde{u})=f((x, 0))\in \fg$. 
From $L(\pi, \iota)\circ f=\mathrm{id}_{L(T)}$, it follows that $\pi(\tilde{x})=x$, 
and combining with $\tilde{x}\in T'$ we have $\tilde{x}=\iota(x)$. 
From  
$L(\tilde{\sigma})(D_{\tilde{x}, \tilde{y}})=D_{\tilde{\sigma}(\tilde{x}), \tilde{\sigma}(\tilde{y})}=D_{\tilde{x}, \tilde{y}}$, 
$L(\tilde{\sigma})$ fixes any element of $\ider(\tilde{T})$, and we see that 
\[
(L(\tilde{\sigma})-1)(\tilde{x}, \tilde{u})=((\tilde{\sigma}-1)(\tilde{x}), 0)
= (\tilde{\sigma}(\iota(x))-\iota(x), 0)
= (\iota((\sigma-1)(x)), 0), 
\]
and this belongs to $\fg$. 
Since $\sigma-1$ is invertible, we see that $T'=\iota(T)\subseteq\fg$. 
\end{proof}

Here are a few examples 
concerning Theorem \ref{thm_tightness} (1). 

\begin{example}
(1)
If $\fg=\mathfrak{sl}_n$, 
$\fm\subseteq\fg$ is the set of matrices whose diagonal components are $0$ 
and $\fh\subseteq\fg$ is the set of diagonal matrices, 
then $(\fg, \fm, \fh)$ is a reductive triple, 
$L(\fm)\cong\fg$ 
and $[\fh, \fm]=\fm$ holds. 

(2)
For a semisimple Lie algebra $\fg$, 
let $T=\fg_{1, t}$ with $t\not=0, -1/4$ 
from Example \ref{ex_l_to_ly}. 
Then $L(T)$ is isomorphic to $\fg\oplus\fg$, 
so is semisimple. 
$\ider(T)$ is isomorphic to $\fg$ 
and its action on $T$ is isomorphic to the adjoint action. 
Thus $[T, T, T]=T$ holds. 
\end{example}

\begin{example}\label{ex_not_compat_with_decomp}
The $L$-(semi)simplicity of $T$ is not enough for the tightness 
even in a situation coming from a representation of $T$ on $V$, 
as we saw in Example \ref{ex_non_tight}. 

On the other hand, 
if $L(T)$ is semisimple, 
there is a section $L(T)\to L_{L(\tilde{T})}(\iota(T))$ given by 
the commutator $[L_{L(\tilde{T})}(\iota(T)), L_{L(\tilde{T})}(\iota(T))]$ 
by Corollary \ref{cor_ss_section}. 
Let us check that the conditions of Lemma \ref{lem_tightness} 
do not hold in Example \ref{ex_non_tight}. 
From 
\[
[((x, 0), c), ((y, 0), d)]=(([x, y], 0), -\lambda([x, y])p_V), 
\]
we see that the lift of $L(T)$ is $\{((x, 0), -\lambda(x)p_V)\mid x\in T\}$, 
and this is not compatible 
with the decomposition $L(\tilde{T})=\tilde{T}\oplus \ider(\tilde{T})$ 
if $\lambda\not=0$. 
\end{example}

\section{Correspondence of representations}

In this final section, we construct a functor from 
the category of tight representations of a Lie-Yamaguti algebra 
(resp. an infinitesimal $s$-manifold) 
to the category of effective minimal representations of 
the associated reductive triple (resp. local regular $s$-pair), 
and show that these categories are equivalent. 
In particular, we obtain our main results in the $L$-semisimple case. 

In what follows, for a representation $V$ of $T$, 
we will sometimes write the elements $(x, 0), (0, v)$ of $\tilde{T}=T\oplus V$ 
simply as $x, v$. 

\begin{proposition}\label{prop_rly_to_rrt}
Let $T$ be a Lie-Yamaguti algebra, 
$V$ a representation of $T$, 
and $\tilde{T}:=T\oplus V$ the extension associated to $V$. 
Let $\pi: \tilde{T}\to T$ be the projection, $\iota: T\to \tilde{T}$ the zero section, 
and identify $T$ with $\iota(T)$. 
Assume furthermore that the representation $V$ is tight, i.e., 
$L(\pi, \iota): L_{L(\tilde{T})}(T)\to L(T)$ is an isomorphism. 

(1)
The ideal $\mathrm{Ker}\, L(\pi)$ is abelian 
and is equal to $V\oplus \ider_{\tilde{T}}(T, V)$. 
One can regard $\ider_{\tilde{T}}(T, V)$ as a subspace of 
$\mathrm{Hom}(T, V)$, and 
$(\mathrm{Ker}\, L(\pi), V, \ider_{\tilde{T}}(T, V))$
is an effective and minimal representation of the reductive triple $(L(T), T, \ider(T))$. 
Concretely, 
for $x, y, z\in T$ and $v\in V$, 
if we define $\theta_2(z, v)\in\mathrm{Hom}(T, V)\subseteq\mathrm{End}(T\oplus V)$ 
by $\theta_2(x, v)y=\theta(x, y)v$, 
then $D_{x, v}=-\theta_2(x, v)$ as an element of $\ider_{\tilde{T}}(T, V)$ and 
\begin{equation}\label{eq_tightrep_to_reptriple}
\begin{array}{rcl}
(x, 0)\cdot (v, 0) & = & 
(\rho(x)v, -\theta_2(x, v)), \\
(x, 0)\cdot (0, f) & = & (-f(x), 0), \quad\hbox{in particular} \\
(x, 0)\cdot (0, D_{z, v}) & = & (-\theta(z, x)v, 0), \\
(0, D_{x, y})\cdot (v, 0) & = & (\delta(x, y)v, 0), \\
(0, D_{x, y})\cdot (0, f) & = & (0, [D_{(x, 0), (y, 0)}, f]) 
= (0, \delta(x, y)\circ f - f\circ D_{x, y}), \\
 & & \qquad \hbox{and in particular} \\
(0, D_{x, y})\cdot (0, D_{z, v}) & = &
(0, \theta_2(z, v)\circ D_{x, y} - \delta(x, y)\circ \theta_2(z, v)). 
\end{array}
\end{equation}
A homomorphism $\chi: V\to V'$ of representations of $T$ 
induces a homomorphism 
\[
\chi\times \chi_*: V\oplus \ider_{\tilde{T}}(T, V)\to V'\oplus \ider_{\tilde{T}}(T, V'); 
(v, f)\mapsto (\chi(v), \chi\circ f)
\]
of representations of $(L(T), T, \ider(T))$. 

(2)
Assume that $(T, \sigma)$ is an infinitesimal $s$-manifold 
and that $(V, \psi)$ is a representation of $(T, \sigma)$. 
Let $\tilde{\sigma}=(\sigma, \psi)$. 
Then $L(\tilde{\sigma})$ preserves 
the decomposition $L(\tilde{T})=L_{L(\tilde{T})}(T)\oplus \mathrm{Ker}\ L(\pi)$, 
and acts as $(x, f)\mapsto (\sigma(x), f)$ 
on $L_{L(\tilde{T})}(T)$ and 
\begin{equation}\label{eq_tightrep_to_rep_spair}
(v, f)\mapsto (\psi(v), f)
\end{equation}
on $\mathrm{Ker}\, L(\pi)$. 
On $(\mathrm{Ker}\, L(\pi), V, \ider_{\tilde{T}}(T, V))$, 
(\ref{eq_tightrep_to_reptriple}) and (\ref{eq_tightrep_to_rep_spair}) define 
a structure of a representation of the local regular $s$-pair 
$(L(T), L(\sigma))$. 

If $\chi: (V, \psi)\to (V', \psi')$ is a homomorphism of representations of $(T, \sigma)$, 
then $\chi\times \chi_*$ is a homomorphism 
of representations of $(L(T), L(\sigma))$. 
\end{proposition}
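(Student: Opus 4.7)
The plan is to analyze $\mathrm{Ker}\, L(\pi)$ first and then read off everything from it. Since $V$ is an abelian ideal of $\tilde T$ (Proposition \ref{prop_ly_rep_ext}), the formulas (\ref{eq_rep_cor}) directly give $D_{(0, u), (0, v)} = 0$, and the antisymmetry $D_{x, y} = -D_{y, x}$ from (LY2) folds mixed terms together, so $\ider(\tilde T) = \ider_{\tilde T}(T, T) + \ider_{\tilde T}(T, V)$. Under $L(\pi)$ the first summand projects onto $\ider(T)$ and the second to zero. Tightness gives that $L(\pi, \iota): L_{L(\tilde T)}(T) = T \oplus \ider_{\tilde T}(T, T) \to L(T)$ is an isomorphism, so $L(\pi)|_{\ider_{\tilde T}(T, T)}$ is injective; this will yield both the directness of the above sum and the identity $\mathrm{Ker}\, L(\pi) = V \oplus \ider_{\tilde T}(T, V)$.

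The single computational identity $D_{(x, 0), (0, v)}(y, w) = (0, -\theta(x, y) v)$, immediate from (\ref{eq_rep_cor}), will do most of the remaining work in part (1). It shows every element of $\ider_{\tilde T}(T, V)$ annihilates $V$ and maps $T$ into $V$, yielding the injection $\ider_{\tilde T}(T, V) \hookrightarrow \mathrm{Hom}(T, V)$ with $D_{x, v} = -\theta_2(x, v)$, and making every bracket within $\mathrm{Ker}\, L(\pi)$ vanish so that the ideal is abelian. Combining this with (\ref{eq_lt}) and (\ref{eq_rep_cor}), one reads off (\ref{eq_tightrep_to_reptriple}) by direct computation. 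These formulas also verify the axioms of Definition \ref{def_rep_rt}: $\ider(T)$ stabilizes both $V$ and $\ider_{\tilde T}(T, V)$, and $T \cdot \ider_{\tilde T}(T, V) \subseteq V$. Minimality holds because the $\mathrm{s}$-component $-\theta_2(x, v)$ of $(x, 0) \cdot (v, 0)$ ranges over a generating set of $\ider_{\tilde T}(T, V)$; effectiveness is exactly the injection $\ider_{\tilde T}(T, V) \hookrightarrow \mathrm{Hom}(T, V)$. Functoriality is then straightforward: $\chi: V \to V'$ extends to a Lie-Yamaguti morphism $\tilde\chi: (x, v) \mapsto (x, \chi v)$, and $L(\tilde\chi)(D_{(x, 0), (0, v)}) = D_{(x, 0), (0, \chi v)}$ is simply $\chi \circ (-\theta_2(x, v)) = \chi_*(D_{x, v})$.

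For part (2), I would first verify that $\tilde\sigma := (\sigma, \psi)$ is a Lie-Yamaguti automorphism of $\tilde T$: unwinding (\ref{eq_rep_cor}) shows that the homomorphism conditions reduce precisely to (RISM1)--(RISM3). Consequently $L(\tilde\sigma)$ is a Lie algebra automorphism of $L(\tilde T)$. Since $\pi \circ \tilde\sigma = \sigma \circ \pi$, it intertwines with $L(\sigma)$ via $L(\pi)$ and preserves $\mathrm{Ker}\, L(\pi)$; since $\tilde\sigma \circ \iota = \iota \circ \sigma$, it also preserves $\iota(T)$ and hence $L_{L(\tilde T)}(T)$. The action on $L_{L(\tilde T)}(T)$ is $\sigma$ on $T$, and fixes each $D_{(a, 0), (b, 0)} \in \ider_{\tilde T}(T, T)$: applying $L(\tilde\sigma)$ sends it to $D_{(\sigma a, 0), (\sigma b, 0)}$, which projects under the tightness isomorphism to $D_{\sigma a, \sigma b}$, and (ISM2) together with (ISM3) forces $D_{\sigma a, \sigma b} = D_{a, b}$ in $\ider(T)$. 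On $\mathrm{Ker}\, L(\pi)$ the action is $\psi$ on $V$ and fixes each $D_{(a, 0), (0, v)}$, because (RISM2) gives $-\theta(\sigma a, y) \psi v = -\theta(a, y) v$.

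Finally, the axioms for a representation of the local regular $s$-pair $(L(T), L(\sigma))$ come out as immediate consequences: $\tilde\psi := L(\tilde\sigma)|_{\mathrm{Ker}\, L(\pi)}$ is invertible (restriction of an automorphism), preserves $V = M_{\mathrm{n}}$, is the identity on $M_{\mathrm{s}} = \ider_{\tilde T}(T, V)$, and satisfies $L(\sigma)(x) \tilde\psi(m) = \tilde\psi(x m)$ because $L(\tilde\sigma)$ is an automorphism restricted to the ideal $\mathrm{Ker}\, L(\pi)$. Functoriality for morphisms of representations of $(T, \sigma)$ follows from $\tilde\chi \circ \tilde\sigma = \tilde\sigma' \circ \tilde\chi$, which in turn follows from $\chi \circ \psi = \psi' \circ \chi$. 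The entire argument is essentially bookkeeping; the subtlest point is the tightness-dependent decomposition $\mathrm{Ker}\, L(\pi) = V \oplus \ider_{\tilde T}(T, V)$, on which every subsequent identification depends.
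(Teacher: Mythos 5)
Your proposal is correct and follows essentially the same route as the paper: identify $\mathrm{Ker}\, L(\pi)=V\oplus \ider_{\tilde T}(T, V)$ using tightness, embed $\ider_{\tilde T}(T, V)$ into $\mathrm{Hom}(T, V)$ via $D_{x, v}=-\theta_2(x, v)$, read off abelianness, effectiveness, minimality and the action formulas, and handle (2) through the induced automorphism $L(\tilde\sigma)$ and its triviality on the relevant derivations. The only cosmetic caveat is your appeal to $L(\tilde\chi)$ for the (possibly non-surjective) morphism $\tilde\chi$ -- $T\mapsto L(T)$ is not functorial in general -- but the identity $\chi\circ\theta_2(x, v)=\theta_2'(x, \chi v)$ you invoke is exactly the direct verification needed, and the paper itself suggests this same alternative.
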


\begin{proof}
(1)
Since 
$V$ is an abelian ideal of $\tilde{T}$ by Proposition \ref{prop_ly_rep_ext}, 
we have $\ider_{\tilde{T}}(V)=0$ and hence 
\[
L(\tilde{T})=\tilde{T}\oplus \ider(\tilde{T}) 
= (T\oplus V)\oplus (\ider_{\tilde{T}}(T) + \ider_{\tilde{T}}(T, V)). 
\]
Let $((x, v), h+h')$ be an element of $L(\tilde{T})$, 
where $x\in T$, $v\in V$, $h\in \ider_{\tilde{T}}(T)$ 
and $h'\in\ider_{\tilde{T}}(T, V)$. 
Let $\overline{h}$ and $\overline{h'}$ be the self-maps of $T$ 
induced by $h$ and $h'$, respectively. 
From $[\tilde{T}, V, \tilde{T}]\subseteq V$, we have $h'(y)\in V$ for any $y\in T$, 
and hence $\overline{h'}=0$. 
Thus
\[
L(\pi)(((x, v), h+h'))=(x, \overline{h}+\overline{h'})=(x, \overline{h}). 
\]
From this
we see that $V\oplus \ider_{\tilde{T}}(T, V)\subseteq \mathrm{Ker}\, L(\pi)$ holds. 

Conversely, if $((x, v), h+h')\in\mathrm{Ker}\, L(\pi)$, 
then we have $x=0$ and $\overline{h}=0$. 
By the assumption that $L(\pi, \iota): T\oplus \ider_{\tilde{T}}(T)\to L(T)$ is an isomorphism, 
this implies $h=0$, and we have $\mathrm{Ker}\, L(\pi)=V\oplus \ider_{\tilde{T}}(T, V)$. 

Since $V$ is an abelian ideal, any element of $\ider_{\tilde{T}}(T, V)$ 
maps $\tilde{T}$ to $V$ and $V$ to $0$. 
Thus, 
via 
$\ider_{\tilde{T}}(T, V)\subseteq \mathrm{End}(\tilde{T})\cong 
\mathrm{Hom}(T, T)\oplus\mathrm{Hom}(T, V)\oplus\mathrm{Hom}(V, T)\oplus\mathrm{Hom}(V, V)$, 
$\ider_{\tilde{T}}(T, V)$ can be regarded as a subspace of $\mathrm{Hom}(T, V)$. 

From this and $D_{v, w}=0$ for $v, w\in V$, 
we see that the ideal $\mathrm{Ker}\, L(\pi)=V\oplus \ider_{\tilde{T}}(T, V)$ is abelian. 
As usual, restricting the Lie bracket to 
$L_{\tilde{T}}(T)\times \mathrm{Ker}\, L(\pi)$, 
we have a structure of a representation of $L_{\tilde{T}}(T)\cong L(T)$ 
on $\mathrm{Ker}\, L(\pi)$. 

For convenience, let us write 
$(\fg, \fm, \fh)=(L(T), T, \ider(T))$ and 
$(M, M_{\mathrm{n}}, M_{\mathrm{s}})=(\mathrm{Ker}\, L(\pi), V, \ider_{\tilde{T}}(T, V))$. 
Then from the fact that $(L(\tilde{T}), \tilde{T}, \ider(\tilde{T}))$ 
is a reductive triple, we have 
$\fm M_{\mathrm{s}}\subseteq M_{\mathrm{n}}$, 
$\fh M_{\mathrm{n}}\subseteq M_{\mathrm{n}}$ 
and $\fh M_{\mathrm{s}}\subseteq M_{\mathrm{s}}$, 
so we have a representation of the reductive triple. 
Furthermore, 
we identified $\ider_{\tilde{T}}(T, V)$ 
with a subspace of $\mathrm{Hom}(T, V)$ in a way that their actions on $T$ coincide. 
This implies that 
$M_{\mathrm{s}}=\ider_{\tilde{T}}(T, V)\hookrightarrow \mathrm{Hom}(T, V)=\mathrm{Hom}(\fm, M_{\mathrm{n}})$, 
i.e., the representation is effective. 
We have 
$(\fm M_{\mathrm{n}})_{\mathrm{s}}=(TV)_{\mathrm{s}}=\ider_{\tilde{T}}(T, V)=M_{\mathrm{s}}$, 
so the representation is minimal. 

The equalities (\ref{eq_tightrep_to_reptriple}) follow 
from (\ref{eq_rep_cor}) and (\ref{eq_lt}). 
First, in $\tilde{T}$, 
\[
[(x, 0), (0, v), (y, 0)]=(0, -\theta(x, y)v)=(0, -\theta_2(x, v)y), 
\]
so $D_{x, v}=-\theta_2(x, v)$ holds. 
Then, for example, 
in $L(\tilde{T})$ we have 
\[
[(x, 0), (v, 0)]=(x*v, D_{x, v})=(\rho(x)v, -\theta_2(x, v)), 
\]
hence $(x, 0)\cdot (v, 0)=(\rho(x)v, -\theta_2(x, v))$, and so on. 

If $\chi: V\to V'$ be a homomorphism of representation of $T$, 
it is straightforward to check that $\chi\times \chi_*$ 
is a homomorphism of representations of $L(T)$. 
We may also prove this 
by showing that it induces a homomorphism of Lie algebras 
$L(T\oplus V)\to L(T\oplus V')$. 

\smallbreak
(2)
For $x\in T$ 
we have 
$L(\tilde{\sigma})((x, 0))=(\tilde{\sigma}(x), 0)=(\sigma(x), 0)\in T$. 
Similarly, 
for $v\in V$ 
we have 
$L(\tilde{\sigma})((v, 0))=(\tilde{\sigma}(v), 0)=(\psi(v), 0)\in V$. 
By the proof of \cite[Proposition 4.5 (2)]{Takahashi2021}, 
$(\tilde{T}, \tilde{\sigma})$ satisfies (ISM1-3). 
This means that $D_{\tilde{\sigma}(\tilde{x}), \tilde{\sigma}(\tilde{y})}=D_{\tilde{x}, \tilde{y}}$ 
holds for any $\tilde{x}, \tilde{y}\in\tilde{T}$, 
and $L(\tilde{\sigma})$ acts trivially on $\ider(\tilde{T})$. 
Thus $L(\tilde{\sigma})$ preserves $L_{L(\tilde{T})}(T)$ and 
$\mathrm{Ker}\,L(\pi)=V\oplus\ider_{\tilde{T}}(T, V)$. 

Let $\varphi$ be the automorphism $L(\sigma)$ of $L(T)$, 
which one may identify with the restriction of $L(\tilde{\sigma})$ to $L_{L(\tilde{T})}(T)$,
and let $\tilde{\psi}$ be the restriction of $L(\tilde{\sigma})$ to 
$\mathrm{Ker}\,L(\pi)=V\oplus\ider_{\tilde{T}}(T, V)$. 
Concretely, the latter is given by $\tilde{\psi}(v, f)=(\psi(v), f)$. 
Thus we have to show that 
$\varphi(x)\cdot\tilde{\psi}(m)=\tilde{\psi}(x\cdot m)$. 
The left hand side is 
$[\varphi(x), \tilde{\psi}(m)]$ in $L(\tilde{T})$, 
which is equal to 
$[\tilde{\sigma}(x), \tilde{\sigma}(m)]=\tilde{\sigma}([x, m])=\tilde{\psi}(x\cdot m)$. 

If $\chi: (V, \psi)\to (V', \psi')$ is a homomorphism, 
then we have
\[
(\chi\times \chi_*)\tilde{\psi}(v, f)
= (\chi(\psi(v)), \chi\circ f)
= (\psi'(\chi(v)), \chi\circ f)=\tilde{\psi}'((\chi\times \chi_*)(v, f)). 
\]
\end{proof}

\begin{definition}
(1)
Let $T$ be a Lie-Yamaguti algebra. 
We define the functor 
\[
\mathcal{RRT}_T: \mathrm{Rep}^{\mathrm{t}}(T)\to \mathrm{Rep}^{\mathrm{em}}(L(T), T, \ider(T))
\]
from the category of tight representations of $T$ 
to the category of effective minimal representations of $(L(T), T, \ider(T))$ 
by 
\[
\mathcal{RRT}_T(V):=V\oplus \ider_{T\oplus V}(T, V), 
\]
where we regard the right hand side as a representation of $(L(T), T, \ider(T))$ 
by the previous proposition. 

(2)
Let $(T, \sigma)$ be an infinitesimal $s$-manifold. 
We define the functor 
\[
\mathcal{RLRS}_{(T, \sigma)}: \mathrm{Rep}^{\mathrm{t}}(T, \sigma)
\to \mathrm{Rep}^{\mathrm{em}}(L(T), L(\sigma))
\]
from the category of tight representations of $T$ 
to the category of effective minimal representations of $(L(T), L(\sigma))$ 
by 
\[
\mathcal{RLRS}_{(T, \sigma)}((V, \psi)):=(V\oplus \ider_{T\oplus V}(T, V), \psi\times \mathrm{id}_{\ider_{T\oplus V}(T, V)}). 
\]
\end{definition}

\begin{theorem}\label{thm_equivalence_ly}
Let $T$ be a Lie-Yamaguti algebra. 
Then the functors
\[
\mathcal{RRT}_T: \mathrm{Rep}^{\mathrm{t}}(T)\to \mathrm{Rep}^{\mathrm{em}}(L(T), T, \ider(T))
\]
and 
\[
\mathcal{RLY}_{(L(T), T, \ider(T))}: 
\mathrm{Rep}^{\mathrm{em}}(L(T), T, \ider(T))\to\mathrm{Rep}^{\mathrm{t}}(T)
\]
are inverse to each other up to natural isomorphisms, 
and hence the categories $\mathrm{Rep}^{\mathrm t}(T)$ and 
$\mathrm{Rep}^{\mathrm{em}}(L(T), T, \ider(T))$ 
are equivalent. 
\end{theorem}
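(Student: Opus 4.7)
The plan is to exhibit natural isomorphisms $\eta: \mathrm{id}_{\mathrm{Rep}^{\mathrm{t}}(T)} \Rightarrow \mathcal{RLY}_{(L(T), T, \ider(T))} \circ \mathcal{RRT}_T$ and $\epsilon: \mathcal{RRT}_T \circ \mathcal{RLY}_{(L(T), T, \ider(T))} \Rightarrow \mathrm{id}_{\mathrm{Rep}^{\mathrm{em}}(L(T), T, \ider(T))}$ and to check that they are mutually inverse. As a preliminary, I would record that $\mathcal{RLY}_{(L(T), T, \ider(T))}$ really does land in $\mathrm{Rep}^{\mathrm{t}}(T)$: in this triple one has $[T, T]_{\ider(T)} = \ider(T)$, which acts faithfully on $T$ by the definition of $\ider(T)$, so the tightness proposition of Section 5 applies.

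For $\eta_V$ with $V\in \mathrm{Rep}^{\mathrm{t}}(T)$, both objects have underlying space $V$; I take $\eta_V$ to be the identity on $V$. The round-trip endows $V$ with a $T$-representation structure via the formulas (\ref{eq_rrt_to_rly}), applied to the effective and minimal reductive-triple representation $(V \oplus \ider_{\tilde T}(T, V), V, \ider_{\tilde T}(T, V))$ produced by Proposition \ref{prop_rly_to_rrt}. Substituting the explicit action formulas (\ref{eq_tightrep_to_reptriple}), and in particular the identity $D_{x, v} = -\theta_2(x, v)$, one obtains directly $(x\cdot v)_{\mathrm{n}} = \rho(x)v$, $y\cdot(x\cdot v)_{\mathrm{s}} = \theta(x, y)v$, and $[x, y]_{\fh}\cdot v = \delta(x, y)v$, so the reconstructed triple $(\rho, \theta, \delta)$ agrees with the original and $\eta_V$ is an isomorphism. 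Naturality in $V$ is automatic, since the functorial action of a morphism $\chi$ on $\mathcal{RRT}_T(V)$ restricts to $\chi$ on the $\mathrm{n}$-part.

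For $\epsilon_M$ with $M = M_{\mathrm{n}} \oplus M_{\mathrm{s}}$, set $\tilde T = T \oplus M_{\mathrm{n}}$, the Lie-Yamaguti extension built from the $T$-action on $M_{\mathrm{n}}$. The composition has underlying space $M_{\mathrm{n}} \oplus \ider_{\tilde T}(T, M_{\mathrm{n}})$; I take $\epsilon_M$ to be the identity on $M_{\mathrm{n}}$ and, on the $\mathrm{s}$-part, the linear map determined inside $\mathrm{Hom}(T, M_{\mathrm{n}})$ as follows. By Proposition \ref{prop_rly_to_rrt}, $\ider_{\tilde T}(T, M_{\mathrm{n}})$ embeds into $\mathrm{Hom}(T, M_{\mathrm{n}})$, and the computation $D_{(x,0),(0,v)}|_T\colon y\mapsto -\theta(x,y)v = -y\cdot (x\cdot v)_{\mathrm{s}}$ identifies this subspace with the span of the maps $y\mapsto -y\cdot s$ for $s$ in the image of $T M_{\mathrm{n}}\to M_{\mathrm{s}}$. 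Minimality of $(M, M_{\mathrm{n}}, M_{\mathrm{s}})$ gives $(TM_{\mathrm{n}})_{\mathrm{s}} = M_{\mathrm{s}}$; effectiveness gives injectivity of $M_{\mathrm{s}}\hookrightarrow\mathrm{Hom}(T, M_{\mathrm{n}})$, $s\mapsto (y\mapsto y\cdot s)$. Hence these two subspaces of $\mathrm{Hom}(T, M_{\mathrm{n}})$ coincide, producing a canonical isomorphism $\ider_{\tilde T}(T, M_{\mathrm{n}}) \xrightarrow{\sim} M_{\mathrm{s}}$ and thus a linear isomorphism $\epsilon_M$. Equivariance under the action of $L(T) = T \oplus \ider(T)$ is then a direct comparison of (\ref{eq_tightrep_to_reptriple}) with the $L(T)$-action on $M$, relying on $(x\cdot v)_{\mathrm{n}} = \rho(x)v$ for the $T$-action on $M_{\mathrm{n}}$, on $\theta(x, y)v = y\cdot (x\cdot v)_{\mathrm{s}}$ for the $T$-action on $M_{\mathrm{s}}$, and on $[x, y, z] = [[x, y]_{\fh}, z]$ for consistency of the $\ider(T)$-actions.

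Naturality of $\epsilon$ follows from the explicit form $\chi \mapsto \chi\times\chi_*$ of the functor $\mathcal{RRT}_T$ in Proposition \ref{prop_rly_to_rrt}, and the verification that $\eta$ and $\epsilon$ are mutual inverses is immediate from their construction on the underlying spaces. The principal obstacle is the construction and analysis of $\epsilon_M$ in Step 3: the matching of the two subspaces inside $\mathrm{Hom}(T, M_{\mathrm{n}})$ is precisely where both hypotheses on $(M, M_{\mathrm{n}}, M_{\mathrm{s}})$ are indispensable, and the subsequent $L(T)$-equivariance check requires careful sign bookkeeping, particularly for the action of $\ider(T)$ on the $\mathrm{s}$-part transported through the identification with $\ider_{\tilde T}(T, M_{\mathrm{n}})$.
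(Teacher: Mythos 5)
Your proposal is correct, and its overall architecture (two natural isomorphisms that are the identity on the underlying $\mathrm{n}$-parts, with effectiveness and minimality entering exactly when the $\mathrm{s}$-part is matched against $\ider_{T\oplus M_{\mathrm{n}}}(T, M_{\mathrm{n}})$) is the same as the paper's; your unit $\eta$ coincides with the paper's $\alpha$, and your preliminary remark that $\mathcal{RLY}_{(L(T), T, \ider(T))}$ lands in $\mathrm{Rep}^{\mathrm{t}}(T)$ is exactly the paper's observation following the tightness criterion of Section 5. Where you diverge is the second isomorphism: the paper notes that $(L(T)\oplus M, T\oplus M_{\mathrm{n}}, \ider(T)\oplus M_{\mathrm{s}})$ is an effective and minimal reductive triple over the Lie-Yamaguti algebra $T\oplus M_{\mathrm{n}}$ and invokes Proposition \ref{prop_isom_to_L} (3) to obtain a unique isomorphism of reductive triples restricting to the identity on $T\oplus M_{\mathrm{n}}$, so that $L(T)$-equivariance of $\beta_M$ comes for free and only the comparison $M_{\mathrm{n}}+[T, M_{\mathrm{n}}]=M_{\mathrm{n}}\oplus M_{\mathrm{s}}$ versus $M_{\mathrm{n}}\oplus\ider_{T\oplus M_{\mathrm{n}}}(T, M_{\mathrm{n}})$ remains; you instead construct the counit by hand inside $\mathrm{Hom}(T, M_{\mathrm{n}})$, using $D_{x, v}=-\theta_2(x, v)$ together with minimality ($(TM_{\mathrm{n}})_{\mathrm{s}}=M_{\mathrm{s}}$) and effectiveness (injectivity of $M_{\mathrm{s}}\to\mathrm{Hom}(T, M_{\mathrm{n}})$) to identify $\ider_{T\oplus M_{\mathrm{n}}}(T, M_{\mathrm{n}})$ with $M_{\mathrm{s}}$, and then check equivariance directly (for the $\ider(T)$-action on the $\mathrm{s}$-part this reduces to $[[x, y]_{\fh}, z]=D_{x, y}(z)$ acting in the module $M$, and your sign convention $f=(y\mapsto -y\cdot s)\mapsto s$ is the one that makes the $T$-action match, so the bookkeeping does close up). The paper's route is shorter because the general lemma absorbs the equivariance verification; yours is more self-contained and makes explicit exactly which identities of the module structure are used, at the cost of the computational checks you correctly flag as the main burden.
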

\begin{proof}
For simplicity, 
we write $\mathcal{RRT}$ and $\mathcal{RLY}$ 
for $\mathcal{RRT}_T$ and $\mathcal{RLY}_{(L(T), T, \ider(T))}$. 

\smallbreak
(a)
Let us construct a natural isomorphism 
$\alpha: \mathrm{Id}_{\mathrm{Rep}^{\rm t}(T)}\overset{\sim}{\to}
\mathcal{RLY}\circ\mathcal{RRT}$ 
of functors $\mathrm{Rep}^{\rm t}(T)\to\mathrm{Rep}^{\rm t}(T)$. 

For a representation $(V, \rho, \theta, \delta)$ of $T$, 
we have 
\[
\mathcal{RLY}(\mathcal{RRT}(V))=
\mathcal{RLY}(V\oplus \ider_{T\oplus V}(T, V))
=V, 
\]
so we set $\alpha_V:=\mathrm{id}_V$. 
By (\ref{eq_tightrep_to_reptriple}), the action of $T\subseteq L(T)$ on $V\oplus \ider_{T\oplus V}(T, V)$ is given by 
\[
(x, 0)\cdot(v, f) = (\rho(x)v-f(x), -\theta_2(x, v)). 
\]
Then, 
if we write 
$\mathcal{RLY}(\mathcal{RRT}(V))=(V, \rho', \theta', \delta')$, 
then by (\ref{eq_rrt_to_rly}) we have
\begin{align*}
\rho'(x)v &= ((x, 0)\cdot(v, 0))_{\mathrm{n}}=\rho(x)v, \\
\theta'(x, y)v &= (y, 0)\cdot((x, 0)\cdot(v, 0))_{\mathrm{s}}=(y, 0)\cdot(0, -\theta_2(x, v))=(\theta_2(x, v)y, 0)
 \\
&= (\theta(x, y)v, 0). 
\end{align*}
Thus $\rho=\rho'$ and $\theta=\theta'$ holds, 
and consequently $\delta=\delta'$ also follows. 

\smallbreak
(b)
Let us construct a natural isomorphism  
$\beta: \mathrm{Id}_{\mathrm{Rep}^{\mathrm{em}}(L(T), T, \ider(T))}\overset{\sim}{\to}
\mathcal{RRT}\circ\mathcal{RLY}$ 
of functors 
$\mathrm{Rep}^{\mathrm{em}}(L(T), T, \ider(T))
\to \mathrm{Rep}^{\mathrm{em}}(L(T), T, \ider(T))$. 

Let $(M, M_{\mathrm{n}}, M_{\mathrm{s}})$ be a representation of $(L(T), T, \ider(T))$. 
By the construction (see Proposition \ref{prop_rrt_to_rly}), 
the Lie-Yamaguti algebra $T\oplus M_{\mathrm{n}}$ associated to
$M_{\mathrm{n}}=\mathcal{RLY}(M, M_{\mathrm{n}}, M_{\mathrm{s}})$ 
is the Lie-Yamaguti algebra 
$\mathcal{LY}(L(T)\oplus M, T\oplus M_{\mathrm{n}}, \ider(T)\oplus M_{\mathrm{s}})$ 
(see Proposition \ref{prop_ly_from_rt}). 
Similarly, 
by the construction of Proposition \ref{prop_rly_to_rrt}, 
the reductive triple associated to the representation 
$\mathcal{RRT}(M_{\mathrm{n}}) = M_{\mathrm{n}}\oplus\ider_{T\oplus M_{\mathrm{n}}}(T, M_{\mathrm{n}})$ 
of $(L(T), T, \ider(T))$, 
or $(L(T), T, \ider(T))\oplus \mathcal{RRT}(M_{\mathrm{n}})$, 
is the reductive triple associated to the Lie-Yamaguti algebra 
$T\oplus M_{\mathrm{n}}$, 
or $(L(T\oplus M_{\mathrm{n}}), T\oplus M_{\mathrm{n}}, \ider(T\oplus M_{\mathrm{n}}))$. 

We note that $(L(T)\oplus M, T\oplus M_{\mathrm{n}}, \ider(T)\oplus M_{\mathrm{s}})$ 
is minimal and effective. 
Then, by Proposition \ref{prop_isom_to_L} (3), 
there is a unique isomorphism 
\[
(L(T)\oplus M, T\oplus M_{\mathrm{n}}, \ider(T)\oplus M_{\mathrm{s}})
\overset{\sim}{\to}
(L(T), T, \ider(T))\oplus \mathcal{RRT}(M_{\mathrm{n}}) 
\]
of reductive triples which is the identity map on $T\oplus M_{\mathrm{n}}$. 
Furthermore, $T\oplus D(T)$ is generated by $T$ on the both sides,
and 
$M_{\mathrm{n}}+[T, M_{\mathrm{n}}]$ 
is equal to 
$M_{\mathrm{n}}\oplus M_{\mathrm{s}}$ 
on the left hand side and 
$M_{\mathrm{n}}\oplus \ider_{T\oplus M_{\mathrm{n}}}(T, M_{\mathrm{n}})$ 
on the right hand side. 
Thus we obtain an isomorphism 
\[
(M, M_{\mathrm{n}}, M_{\mathrm{s}})\overset{\sim}{\to}
\mathcal{RRT}(\mathcal{RLY}(M, M_{\mathrm{n}}, M_{\mathrm{s}}))
\]
of representations of $(L(T), T, \ider(T))$. 

It is straightforward to check the naturality. 
\end{proof}

\begin{remark}
More precisely, we have natural transformations 
\[
\beta: \mathrm{Id}_{\mathrm{Rep}^{\mathrm{m}}(L(T), T, \ider(T))} 
\to
\mathcal{RRT}\circ
\mathcal{RLY}|_{\mathrm{Rep}^{\mathrm{m}}(L(T), T, \ider(T))}
\] 
on $\mathrm{Rep}^{\mathrm{m}}(L(T), T, \ider(T))$, 
consisting of surjective homomorphisms which are bijective on the ${\mathrm{n}}$-part, 
and 
\[
\gamma: \mathcal{RRT}\circ
\mathcal{RLY}|_{\mathrm{Rep}^{\mathrm{e}}(L(T), T, \ider(T))}
\to
\mathrm{Id}_{\mathrm{Rep}^{\mathrm{e}}(L(T), T, \ider(T))} 
\]
on $\mathrm{Rep}^{\mathrm{e}}(L(T), T, \ider(T))$, 
consisting of injective homomorphisms which are bijective on the ${\mathrm{n}}$-part. 
When restricted to $\mathrm{Rep}^{\mathrm{em}}(L(T), T, \ider(T))$, 
they give natural isomorphisms. 
\end{remark}

From Corollary \ref{cor_ss_section} and Theorem \ref{thm_tightness} (1), we have the following. 
\begin{corollary}\label{cor_equivalence_ly}
If $T$ is an $L$-semisimple Lie-Yamaguti algebra satisfying $[T, T, T]=T$, 
then the categories 
$\mathrm{Rep}(T)$ and $\mathrm{Rep}^{\mathrm{em}}(L(T), T, \ider(T))$ 
are equivalent. 
\end{corollary}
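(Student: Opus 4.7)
The plan is to reduce the corollary to Theorem \ref{thm_equivalence_ly} by showing that the hypotheses $L$-semisimplicity and $[T,T,T]=T$ force every representation of $T$ to be tight, so that $\mathrm{Rep}^{\mathrm{t}}(T)=\mathrm{Rep}(T)$ as categories. Given this, the equivalence with $\mathrm{Rep}^{\mathrm{em}}(L(T), T, \ider(T))$ is exactly what Theorem \ref{thm_equivalence_ly} provides.

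To establish tightness of an arbitrary representation $V$ of $T$, I would set $\tilde{T}=T\oplus V$, let $\pi\colon \tilde{T}\to T$ be the projection, and let $\iota\colon T\to \tilde{T}$ be the zero section, all of which are homomorphisms of Lie-Yamaguti algebras with $\pi\circ\iota=\mathrm{id}_T$. Tightness of $V$ is by definition tightness of the pair $(\pi,\iota)$, i.e.\ the statement that $L(\pi,\iota)\colon L_{L(\tilde{T})}(\iota(T))\to L(T)$ is an isomorphism. Since $L(T)$ is semisimple by hypothesis, Corollary \ref{cor_ss_section} applies and produces a Lie algebra section $f\colon L(T)\to L_{L(\tilde{T})}(\iota(T))$ of $L(\pi,\iota)$. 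With such an $f$ in hand, the hypothesis $[T,T,T]=T$ is exactly the condition of Theorem \ref{thm_tightness} (1), whose conclusion is that $(\pi,\iota)$ is tight. Hence every $V\in\mathrm{Rep}(T)$ is tight.

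Combining the two observations, $\mathrm{Rep}^{\mathrm{t}}(T)$ coincides with the full category $\mathrm{Rep}(T)$, and Theorem \ref{thm_equivalence_ly} then gives the desired equivalence of categories through the mutually inverse functors $\mathcal{RRT}_T$ and $\mathcal{RLY}_{(L(T),T,\ider(T))}$.

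There is essentially no obstacle beyond checking that the two cited hypotheses line up correctly with the setup of $\tilde{T}=T\oplus V$: $L$-semisimplicity is a property of $T$ (hence of $L(T)$) and so passes to the section statement of Corollary \ref{cor_ss_section} applied to $(\pi,\iota)$, while $[T,T,T]=T$ is a property purely of $T$ used to invoke Theorem \ref{thm_tightness} (1). The main ``content'' has already been done in those two results; the corollary is the formal packaging of their combination with the categorical equivalence of Theorem \ref{thm_equivalence_ly}.
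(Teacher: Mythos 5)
Your proposal is correct and follows exactly the paper's own route: Corollary \ref{cor_ss_section} supplies the Lie algebra section because $L(T)$ is semisimple, Theorem \ref{thm_tightness} (1) then yields tightness of every representation from $[T,T,T]=T$, and Theorem \ref{thm_equivalence_ly} converts $\mathrm{Rep}^{\mathrm{t}}(T)=\mathrm{Rep}(T)$ into the stated equivalence. No gaps.
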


\begin{remark}\label{rem_converse}
Conversely, what can we say about $T$ and $L(T)$ 
if $\mathcal{RLY}_{(L(T), T, \ider(T))}: 
\mathrm{Rep}^{\mathrm{em}}(L(T), T, \ider(T))\to\mathrm{Rep}(T)$ 
is an equivalence, or equivalently, 
if every representation of $T$ is tight? 

(1)
We see that $L(T)$ does not have to be semisimple. 
Let $T=\mathbb{K}$ with the trivial binary and ternary operations. 
Then $L(T)=T$ is an abelian $1$-dimensional Lie algebra. 

Let $(V, \rho, \theta, \delta)$ be a representation of $T$. 
Since $T$ is $1$-dimensional, 
$\theta(x, y)=\theta(y, x)$, $[\rho(x), \rho(y)]=0$ and $\rho(x*y)=0$ 
hold for any $x, y\in T$, 
and $\delta(x, y)=0$ follows from (RLY1). 
By Lemma \ref{lem_tight}, $V$ is a tight representation. 

(2)
On the other hand, we see that if 
every representation of $T$ is tight, 
then $[T, T]\cap T\subseteq [T, T, T]$ holds, 
where $[T, T]$ is the commutator in $L(T)$. 

Indeed, if $[T, T]\cap T\not\subseteq [T, T, T]$, 
then there exists a linear map $\lambda: T\to \mathbb{K}$ 
such that $\lambda([T, T, T])=0$ and $\lambda([T, T]\cap T)\not=0$. 
Let $V$ be a nonzero vector space 
and define $\rho(x)(v)=\lambda(x)v$, $\theta(x, y)(v)=0$ and $\delta(x, y)(v)=-\lambda(x*y)v$ 
(cf. Example \ref{ex_non_tight}). 
From $\lambda([T, T, T])=0$, 
we see that this gives a representation of $T$. 

An element $z\in [T, T]\cap T$ can be written as 
$z=\sum x_i*y_i$ with $\sum D_{x_i, y_i}=0$, 
and $\lambda([T, T]\cap T)\not=0$ implies the existence 
of $x_i, y_i$ with 
$\sum D_{x_i, y_i}=0$ and $\sum \delta(x_i, y_i)\not=0$. 
By Lemma \ref{lem_tight}, $V$ is not tight. 
\end{remark}

\begin{theorem}
Let $(T, \sigma)$ be an infinitesimal $s$-manifold. 
Then the functors 
\[
\mathcal{RLRS}_{(T, \sigma)}: \mathrm{Rep}^{\rm t}(T, \sigma)\to\mathrm{Rep}^{\mathrm{em}}(L(T), L(\sigma))
\]
and 
\[
\mathcal{RISM}_{(L(T), L(\sigma))}: \mathrm{Rep}^{\mathrm{em}}(L(T), L(\sigma))\to\mathrm{Rep}^{\rm t}(T, \sigma)
\] 
are inverse to each other up to natural isomorphisms, 
and hence the categories $\mathrm{Rep}^{\rm t}(T, \sigma)$ and 
$\mathrm{Rep}^{\mathrm{em}}(L(T), L(\sigma))$ 
are equivalent. 
\end{theorem}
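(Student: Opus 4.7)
The plan is to bootstrap from Theorem \ref{thm_equivalence_ly} by verifying that the natural isomorphisms already constructed there are automatically compatible with the extra automorphism-like data $\psi$ and $\tilde\psi$. The key observation is that the $s$-manifold functor $\mathcal{RLRS}_{(T,\sigma)}$ and the $s$-pair functor $\mathcal{RISM}_{(L(T), L(\sigma))}$ commute with the forgetful functors down to the Lie-Yamaguti and reductive-triple categories: by construction, the underlying reductive-triple representation of $\mathcal{RLRS}_{(T,\sigma)}((V, \psi))$ is $\mathcal{RRT}_T(V)$, and the underlying Lie-Yamaguti representation of $\mathcal{RISM}_{(L(T), L(\sigma))}((M, M_{\mathrm n}, M_{\mathrm s}, \tilde\psi))$ is $\mathcal{RLY}_{(L(T), T, \ider(T))}((M, M_{\mathrm n}, M_{\mathrm s}))$. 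It therefore suffices to promote the natural isomorphisms $\alpha$ and $\beta$ of Theorem \ref{thm_equivalence_ly} to the enriched setting, noting that invertibility of $\psi$ and of $\tilde\psi$ correspond because $\tilde\psi = \psi \times \mathrm{id}$.

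For $\alpha_{(V, \psi)}$, I would take $\alpha_V = \mathrm{id}_V$ as before. The target $\mathcal{RISM}(\mathcal{RLRS}((V,\psi)))$ carries the restriction of $\psi \times \mathrm{id}_{\ider_{T \oplus V}(T, V)}$ to its $\mathrm n$-component, which is literally $\psi$, so $\mathrm{id}_V$ trivially intertwines the two copies of $\psi$. For $\beta_{(M, M_{\mathrm n}, M_{\mathrm s}, \tilde\psi)}$, I would reuse the map $\beta_{(M, M_{\mathrm n}, M_{\mathrm s})}$ from Theorem \ref{thm_equivalence_ly}, which is the unique isomorphism
\[
(L(T) \oplus M,\ T \oplus M_{\mathrm n},\ \ider(T) \oplus M_{\mathrm s})
\overset{\sim}{\to}
(L(T), T, \ider(T)) \oplus \mathcal{RRT}(M_{\mathrm n})
\]
of minimal effective reductive triples restricting to the identity on $T \oplus M_{\mathrm n}$. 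The left-hand side carries the automorphism $L(\sigma) \oplus \tilde\psi$, while the right-hand side carries $L(\tilde\sigma) = L(\sigma) \oplus (\psi \times \mathrm{id})$ coming from the construction of Proposition \ref{prop_rly_to_rrt} (2) applied with $\tilde\sigma = \sigma \oplus \psi$. Both restrict to the same map $\sigma \oplus \psi$ on the common $\fm$-component $T \oplus M_{\mathrm n}$, and since an automorphism of a minimal reductive triple preserving the reductive decomposition is uniquely determined by its restriction to the $\fm$-part (its value on any $[x, y]_\fh$ is forced, and such elements span $\fh$), the two automorphisms must agree under $\beta$.

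The only step I expect to require careful bookkeeping is verifying that the right-hand-side automorphism really is the identity on the $\mathrm s$-component $\ider_{T \oplus M_{\mathrm n}}(T, M_{\mathrm n})$, so as to match $\tilde\psi|_{M_{\mathrm s}} = \mathrm{id}_{M_{\mathrm s}}$. This follows from the identity $D_{\tilde\sigma(x), \tilde\sigma(y)} = D_{x, y}$ already used in the proof of Proposition \ref{prop_rly_to_rrt} (2), which shows that $L(\tilde\sigma)$ fixes $\ider(\tilde T)$ pointwise. Once this is in place, naturality of $\alpha$ and $\beta$ transfers directly from the Lie-Yamaguti case, completing the proof of the equivalence.
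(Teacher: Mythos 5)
Your proposal is correct, and its overall strategy is the same as the paper's: take the natural isomorphisms $\alpha$ and $\beta$ from Theorem \ref{thm_equivalence_ly} and check that they are compatible with the extra data $\psi$, $\tilde\psi$; your treatment of $\alpha$ is identical to the paper's. The only divergence is in how you verify that $\beta_{(M, M_{\mathrm{n}}, M_{\mathrm{s}})}$ intertwines $\tilde\psi$ and $\tilde\psi'$. The paper does this by a direct componentwise check: $\beta$ is the identity on $M_{\mathrm{n}}$, where $\tilde\psi'|_{M_{\mathrm{n}}}=\tilde\psi|_{M_{\mathrm{n}}}=\psi$, and $\beta$ sends the $\mathrm{s}$-part to the $\mathrm{s}$-part, where both $\tilde\psi$ and $\tilde\psi'$ are the identity by definition; nothing more is needed. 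You instead conjugate the decomposition-preserving Lie algebra automorphism $L(\sigma)\oplus\tilde\psi$ of $(L(T)\oplus M, T\oplus M_{\mathrm{n}}, \ider(T)\oplus M_{\mathrm{s}})$ through $\beta$ and compare it with $L(\tilde\sigma)$, invoking the fact that a decomposition-preserving automorphism of a minimal reductive triple is determined by its restriction to the $\fm$-part since $[\fm,\fm]_{\fh}=\fh$. This is valid: the relevant triples are indeed minimal (the paper's proof of Theorem \ref{thm_equivalence_ly} records this for the left-hand triple, and the standard triple of $T\oplus M_{\mathrm{n}}$ is minimal by construction), both automorphisms restrict to $\sigma\oplus\psi$ on $T\oplus M_{\mathrm{n}}$, and the restriction of the resulting identity of automorphisms to $M$ and $\mathrm{Ker}\,L(\pi)$ gives exactly the required intertwining. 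Your final bookkeeping point, that $L(\tilde\sigma)$ fixes $\ider(\tilde T)$ pointwise because $D_{\tilde\sigma(\tilde x),\tilde\sigma(\tilde y)}=D_{\tilde x,\tilde y}$, is precisely what Proposition \ref{prop_rly_to_rrt} (2) already establishes, so no gap remains; the trade-off is simply that your uniqueness argument is structurally heavier than the paper's two-line component check, while making more transparent why the compatibility is forced.
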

\begin{proof}
Let us write 
$\mathcal{RLRS}$ and $\mathcal{RISM}$ 
for $\mathcal{RLRS}_{(T, \sigma)}$ and $\mathcal{RISM}_{(L(T), L(\sigma))}$. 

Let $(V, \psi)$ be an object of $\mathrm{Rep}^{\rm t}(T, \sigma)$. 
If we regard $V$ as an object of $\mathrm{Rep}^{\rm t}(T)$, 
then the isomorphism $\alpha_V$ in the proof of Theorem \ref{thm_equivalence_ly}
is actually $\mathrm{id}_V$,
and it commutes with $\psi$. 
Thus $\alpha_{(V, \psi)}=\mathrm{id}_V$ gives 
a natural isomorphism 
$\mathrm{Id}_{\mathrm{Rep}^{\rm t}(T, \sigma)}\overset{\sim}{\to}
\mathcal{RISM}\circ\mathcal{RLRS}$. 

Let $(M, M_{\mathrm{n}}, M_{\mathrm{s}}, \tilde{\psi})$ be an object of 
$\mathrm{Rep}^{\mathrm{em}}(L(T), L(\sigma))$. 
Then we can write 
$\mathcal{RLRS}(\mathcal{RISM}((M, M_{\mathrm{n}}, M_{\mathrm{s}}, \tilde{\psi})))$ 
as $(M', M_{\mathrm{n}}, M'_{\mathrm{s}}, \tilde{\psi}')$. 
The morphism $\beta_{(M, M_{\mathrm{n}}, M_{\mathrm{s}})}$ in the proof of Theorem \ref{thm_equivalence_ly}
is the identity on $M_{\mathrm{n}}$, 
and $\tilde{\psi}'|_{M_{\mathrm{n}}}=\tilde{\psi}|_{M_{\mathrm{n}}}$, 
so they commute on $M_{\mathrm{n}}$. 
On the other hand, $\tilde{\psi}|_{M_{\mathrm{s}}}$ and $\tilde{\psi}'|_{M'_{\mathrm{s}}}$ 
are identity maps, 
and therefore commute with $\beta_{(M, M_{\mathrm{n}}, M_{\mathrm{s}})}$. 
Thus $\beta_{(M, M_{\mathrm{n}}, M_{\mathrm{s}})}$ is an isomorphism 
of representations of $(L(T), L(\sigma))$. 
\end{proof}

By Corollary \ref{cor_ss_section} and Theorem \ref{thm_tightness} (2), the following holds. 
\begin{corollary}\label{cor_equivalence_ism}
Let $(T, \sigma)$ be an $L$-semisimple infinitesimal $s$-manifold. 
Then 
the categories $\mathrm{Rep}(T, \sigma)$ and 
$\mathrm{Rep}^{\mathrm{em}}(L(T), L(\sigma))$ 
are equivalent. 
\end{corollary}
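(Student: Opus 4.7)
The plan is to reduce Corollary \ref{cor_equivalence_ism} to the preceding theorem (on $\mathrm{Rep}^{\mathrm{t}}(T,\sigma) \simeq \mathrm{Rep}^{\mathrm{em}}(L(T), L(\sigma))$) by checking that when $(T,\sigma)$ is $L$-semisimple, every representation of $(T,\sigma)$ is automatically tight. Once this is shown, $\mathrm{Rep}^{\mathrm{t}}(T,\sigma) = \mathrm{Rep}(T,\sigma)$ as full subcategories of $\mathrm{Rep}(T,\sigma)$, and the desired equivalence falls out immediately.

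Concretely, given an arbitrary representation $(V,\psi)$ of $(T,\sigma)$, I would form the extension $\tilde{T} = T\oplus V$ equipped with the operations of Proposition \ref{prop_ly_rep_ext} together with the linear automorphism $\tilde{\sigma} = (\sigma, \psi)$. The conditions (RISM1)--(RISM3) on $(V,\psi)$ guarantee that $\tilde{\sigma}$ is an automorphism of the Lie-Yamaguti algebra $\tilde{T}$ that is compatible with the projection $\pi: \tilde{T}\to T$ and the zero section $\iota: T\to \tilde{T}$, making both $\pi$ and $\iota$ homomorphisms of infinitesimal $s$-manifolds. (Strictly speaking, $\tilde{\sigma}$ need not be regular, but Theorem \ref{thm_tightness} (2) was explicitly formulated to allow this case.)

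Because $L(T)$ is semisimple, Corollary \ref{cor_ss_section} supplies a Lie algebra section $f: L(T)\to L_{L(\tilde{T})}(\iota(T))$ of $L(\pi,\iota)$. The hypothesis of Theorem \ref{thm_tightness} (2) requires $[L(T), L(T)] = L(T)$, which holds because a semisimple Lie algebra is perfect. Applying that theorem yields that the pair $(\pi,\iota)$ is tight, which by definition means that $(V,\psi)$ is a tight representation of $(T,\sigma)$.

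The conclusion is then purely formal: since $\mathrm{Rep}^{\mathrm{t}}(T,\sigma) = \mathrm{Rep}(T,\sigma)$, the equivalence established in the preceding theorem gives the stated equivalence between $\mathrm{Rep}(T,\sigma)$ and $\mathrm{Rep}^{\mathrm{em}}(L(T), L(\sigma))$. I do not anticipate any serious obstacle; all the substantive work has already been carried out in Corollary \ref{cor_ss_section} (existence of a Levi section using semisimplicity) and in Theorem \ref{thm_tightness} (2) (which was tailored precisely to the infinitesimal $s$-manifold setting via the invertibility of $\sigma-1$). The only point demanding minor care is confirming that $\iota$ and $\pi$ are genuine morphisms of infinitesimal $s$-manifolds, but this is immediate from the definition of $\tilde{\sigma}$.
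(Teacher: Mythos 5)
Your proposal is correct and follows essentially the same route as the paper: the paper also deduces the corollary by combining Corollary \ref{cor_ss_section} (Levi section from semisimplicity of $L(T)$) with the relaxed form of Theorem \ref{thm_tightness} (2) (using that $\tilde{\sigma}=(\sigma,\psi)$ satisfies $D_{\tilde{\sigma}(\tilde{x}),\tilde{\sigma}(\tilde{y})}=D_{\tilde{x},\tilde{y}}$ even though it need not be regular) to conclude that every representation of $(T,\sigma)$ is tight, and then invokes the preceding equivalence theorem. Your explicit attention to the non-regularity of $\tilde{\sigma}$ and to the perfectness of the semisimple $L(T)$ matches exactly the points the paper's one-line argument relies on.
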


\begin{remark}
We ask the same question as in Remark \ref{rem_converse}: 
what can we say about $T$ 
if every representation of $(T, \sigma)$ is tight? 

Again, $L(T)$ does not have to be semisimple. 
As in Remark \ref{rem_converse} (1), 
let $T=\mathbb{K}$ with the trivial operations 
and $\sigma: T\to T$ be given by $\sigma(x)=ax$ with $a\in\mathbb{K}$, $a\not=0, 1$. 
Then any representation of $(T, \sigma)$ is tight by the same reason. 
\end{remark}

\end{document}